\newcommand{\HRule}{\rule{\linewidth}{0.5mm}}
\def\eps{\epsilon}%
\def\tensor{\,\raise2pt\hbox{${}_{\otimes}$}\,}
\def\fdg{\,:\,}
\def\ptl{\partial}
\def\rest#1{\raise-2pt\hbox{${\lfloor_{#1}}$}}
\def\cal#1{\mathcal{#1}}
\def\mbo#1{\boldsymbol{#1}}
\def\ip#1#2{\langle#1,#2\rangle}
\def\hatt#1{\widehat #1{}}
\def\olin#1{\overline{#1}{}}
\def\ulin#1{\underline{#1}{}}
\def\tild#1{\widetilde{#1}{}}
\def\grad{{\nabla}}
\newcommand{\leftexp}[2]{{\vphantom{#2}}^{#1}{#2}}
\def\halb{\frac{1}{2}}
\newtheorem{thm}{Theorem}[section]
\newtheorem{cor}[thm]{Corollary}
\newtheorem{lem}[thm]{Lemma}
\newtheorem{pro}[thm]{Proposition}
\newtheorem{rem}[thm]{Remark}
\date{}
\begin{document}
\pagestyle{headings}


\begin{titlepage}
\begin{center}


\textsc{\LARGE Fachbereich Mathematik und Informatik \\[0.75cm] Freie Universit\"at Berlin}\\[3cm]

\textsc{\Large Dissertation}\\[0.5cm]

\HRule \\[0.4cm]
{ \huge \bfseries On the Cauchy Problem for Energy Critical Self-Gravitating Wave Maps}\\[0.4cm]

\HRule \\[0.9cm]

\textsc{\Large Nishanth Abu Gudapati}

\vfill
{\large October 2013\\Angefertigt am Max-Planck-Institut f\"ur Gravitationsphysik \\ (Albert-Einstein-Institut)}

\end{center}
\end{titlepage}
\cleardoublepage
\thispagestyle{empty}
\begin{flushleft}
\vspace*{15cm}
\noindent Vorgelegt am
16. October 2013,\\
\vspace*{0.75cm}
Betreuung \\
Prof. Dr. Gerhard Huisken,\\
Mathematisches Institut,\\
Universit\"at T\"ubingen,\\
Freie Universit\"at Berlin \\
\vspace*{0.25cm}
Prof. Dr. Lars Andersson,\\
Max-Planck-Institut F\"ur Gravitationsphysik,\\
(Albert-Einstein-Institut) \\

\end{flushleft}

\cleardoublepage

{\thispagestyle{empty}
\begin{flushleft}
\noindent Vorgelegt von:\\
Nishanth Abu Gudapati,\\
geboren am 13. November 1985\\ in Hyderabad\\[0.3cm]

\noindent Anschrift:\\
Am M\"uhlenberg 1,\\
14476 Potsdam-Golm\\[3cm]

\noindent {\bf  Erkl\"arung zur Doktorarbeit}\\[1cm]

\noindent Hiermit versichere ich, dass die von mir vorgelegte 
Doktorarbeit mit dem Titel
\end{flushleft}
\begin{center} ``On the Cauchy Problem for Energy Critical Self-Gravitating Wave Maps" \end{center}
\begin{itemize}
\item von mir selbstst\"andig verfasst und keine anderen als die von mir 
angegebenen Quellen und Hilfsmittel benutzt wurden
\item noch nicht anderweitig als Doktorarbeit eingereicht wurde.
\end{itemize}
\vspace{5cm}
\begin{flushleft}
\noindent Berlin, 16.10.2013,
\\[2cm]

\noindent --------------------------------------------------------

\noindent Nishanth Abu Gudapati
\end{flushleft}}

\cleardoublepage

\pagestyle{plain}
\lhead[]{\thepage}
\rhead[\thepage]{}
\pagenumbering{arabic}



\chapter* {Zusammenfassung}

Diese Arbeit handelt von dem Cauchy Problem für Wave--Maps, welche mit den Einstein--Gleichungen der allgemeinen Relativitätstheorie gekoppelt sind.
Wave--Maps sind Abbildungen von einer Lorentz'schen Mannigfaltigkeit auf eine Riemann'sche Mannigfaltigkeit welche kritische Punkte eines Wave--Map Lagrangian sind.
Selbst--gravitative Wave--Maps bilden von einer asymptotosch flachen Lorentz'schen Mannigfaltigkeit ab, welche die Einstein'schen Gleichungen erfüllen, die die Wave--Map als Quelle besitzen.
Die Energie des Wave--Map Lagrangian ist invariant unter Skalierung in 2+1 Dimensionen.
Abgesehen von dem rein geometrischen Interesse ist die Motivation für das Studium von kritischen selbst--gravitativen Wave--Maps, dass die 3+1 Vakuum Einstein Gleichungen auf dem Prinzipalbündel mit eindimensionaler Lie Gruppe auf das Einstein Wave--Map System in 2+1 Dimensionen reduziert werden kann.
Das Ziel dieser Arbeit ist es, ein Programm zur Untersuchung von globaler Regularität von kritischen selbst--gravitativen Wave--Maps ins Leben zu rufen um die globale Regularität der 3+1 Einstein Vakuum Gleichungen zu verstehen.
Die gegenwärtige Herangehensweise hat den Vorteil, dass man in der kritischen Dimension für Wave--Maps arbeitet.
In Laufe der letzten zwanzig Jahre wurde eine Reihe von Techniken entwickelt, um die Frage der globalen Regularität von kritischen Wave--Maps auf dem Minkowski Hintergrund zu klären.
Jeder Vortschritt auf dem Gebiet der globalen Regularität von kritischen selbst--gravitativen Wave--Maps sollte nicht nur diese Methoden im Blick haben, sondern auch neue Ideen und Techniken zur Überwindung von Hindernissen durch die sich entwickelnde Geometrie des Systems einführen.
Diese Arbeit ist ein kleiner Schritt in diese Richtung.

Das wesentliche Resultat dieser Arbeit ist der Beweis, dass die Energie der Einstein--Äquivarianten Wave--Map Systeme sich bei der Cauchy Evolution nicht konzentrierert.
Ein Hauptbestandteil des Beweises ist die Ausnutzung der Tatsache, dass die geometrische Masse im \textit{Unendlichen} des Einstein--Äquivarianten Wave--Map Systems während der Evolution erhalten bleibt.
Diese Beobachtung hat dennoch ein paar subtile lokale Auswirkungen welche benutzt werden um die Energie \textit{lokal} abzuschätzen.
Zum Beispiel konstruieren wir ein Divergenz--freies Vektorfeld, welches Monotonie der Energie auf dem Rückwärts Nullkegel in jedem Punkt gibt.
Außerdem wurde dieser Vektor benutzt um zu Zeigen, dass die Energie sich nicht entfernt von der Achse der Domain--Manigfaltigkeit konzentriert.
Später, wenn die Divergenz des Morawetz Vektors auf dem gestutzten Rückwärts Nullkegel genähert wird, zeigen wir, dass die kinetische Energie sich nicht konzentriert.
Letztendlich, annehmend, dass die Ziel--Mannigfaltigkeit die Grillakis Bedingung erfüllt, fahren wir mit dem Beweis der nicht--Konzentration von Energie für das kritische Einstein--Äquivariante Wave--Map System fort.



\chapter* {Preface}

This work is on the Cauchy problem for wave maps coupled to Einstein's equations of general relativity. 
Wave maps are maps from a Lorentzian manifold to a Riemannian manifold which are critical points of the wave map Lagrangian. 
Self-gravitating wave maps are those from an asymptotically flat Lorentzian manifold which satisfies Einstein's equations 
with the wave map itself as the source field. The energy of the wave map Lagrangian is invariant under scaling in 2+1 dimensions.
Apart from a purely geometrical interest, the motivation for studying critical self-gravitating wave maps is that 3+1 Einstein
vacuum equations on principal bundles with one dimensional Lie group can be reduced to Einstein wave map system in 2+1 dimensions.
The intention of this work is to initiate a program of studying global regularity of critical self-gravitating
wave maps to understand the global regularity of 3+1 Einstein vacuum equations.  In this approach, the advantage is that one
is working in the critical dimension for wave maps. During the last twenty years a rich variety of techniques have been 
developed to address the question of global regularity of critical wave maps on the Minkowski background. Any progress in addressing
the global regularity of critical self-gravitating wave maps should be made by not only keeping these methods in view, but also
by introducing new ideas and techniques to overcome the obstacles caused by the evolving geometry of the system. This work is 
a small step in that direction.

The main result of this work is the proof that the energy of the Einstein-equivariant wave map system does not concentrate during 
the Cauchy evolution. A key ingredient in the proof is the use of the fact that geometric mass at infinity of the Einstein-equivariant 
wave map system is conserved during the evolution. However, this observation has some subtle local implications which have been
used to estimate the energy locally. For instance, we construct a divergence-free vector field which gives monotonicity
of energy in the past null cone of any point. In addition, this vector has also been used to prove that the energy does not 
concentrate away from the axis of the domain manifold. Later, estimating the divergence of a Morawetz vector on a truncated past null cone,
we prove that the kinetic energy does not concentrate. Finally, assuming that the target manifold satisfies the Grillakis condition, 
we proceed to prove the non-concentration of energy for the critical Einstein-equivariant wave map system.



\cleardoublepage
\section*{Conventions}
The letter $c$ is used to denote a generic positive constant which depends on initial energy or 
the universal constants such as the gravitational coupling constant of Einstein's equations. 
We may use it repeatedly in the same estimate to avoid 
cluttering up the notation. Subscripts of scalar functions denote partial differentiation and $\grad$ denotes
covariant differentiation, likewise double subscripts denote second order partial differentiation.
From Chapter 3 onwards, coordinate null triad vectors and their duals are denoted by the
letters $\ell$, $n$ and $m$. The sign convention $({-}{+}{+})$ is used for Lorentzian manifolds and Einstein's summation 
convention is used throughout. 

\section*{Acknowledgements}
I am most grateful to Lars Andersson for numerous insightful discussions and suggestions throughout the 
course of my PhD, which is a reflection of his deep and broad knowledge of the field. I am also grateful
to Gerhard Huisken for many valuable discussions and mentorship 
throughout the course of my stay at the AEI. During the preparation of this work, I had the pleasure 
of talking to a lot of people. Arne G\"odeke, Johannes Mosig, Melanie Rupflin, Alan Rendall, Pieter Blue, 
Thomas B\"ackdahl, Piotr Bizon,
J\'{e}r\'{e}mie Joudioux, Carla Cederbaum and Hermann Nicolai are a few among them. I take this opportunity to express my gratitude
to each of them.\\ \\
Special thanks are due to Vincent Moncrief for many interesting discussions, general advice and support. \\ \\
I am deeply indebted to my parents Gudapati Venkata Krishna and Gudapati Santa Kumari, my sister Spruha and
members of my extended family for their
positivity and unwavering belief in me during the difficult and low-spirited times of my PhD studies, even when
I had fragile confidence in myself.
There are not enough words to express how I am fortunate that I belong to a family with them.\\ \\
Finally, I am also thankful to the Max-Planck-Institut f\"ur Gravitationsphysik (AEI) for the IMPRS Scholarship
and hospitality.
\cleardoublepage
\newpage
\tableofcontents
\chapter{Introduction}

\section{Preliminaries and Definitions}
Let $(M,g)$ be a smooth, orientable, globally hyperbolic $(m+1)$ dimensional Lorentzian manifold and $(N,h)$ an $n$-dimensional smooth, complete, connected Riemannian manifold.
A smooth map $U : M \to N$ is called a wave map if it is a critical point of the action \footnote{this is identical to the action of harmonic 
maps except that the 
manifold $M$ is Lorentzian and consequently it is not nonnegative as opposed to the Dirichlet energy of harmonic maps } 
\[ S_{\text{WM}} (U) \fdg= \halb \int_M \text{Tr}_g(U^* h) \, \bar{\mu}_g  \]
where $U^*h$ is the pull-back of the metric $h$ by $U$, $\text{Tr}_g$ the trace 
with respect to the metric $g$ and $\bar{\mu}_g$ the spacetime volume form of $M$.
In local coordinates $\{ x^\mu \},\,\mu = 0,1, \cdots,m$ on $M$ and $\{y^j\},j =1,\cdots,n$ on $N$, the Lagrangian 

\begin{align*}
\mathcal{L}_{\text{WM}} (U) \fdg =& \halb \text{Tr}_g(U^* h)  \\
\equiv & \halb  g^{\mu \nu} h_{ij}(U) \ptl_\mu U^i \ptl_\nu U^j  
\equiv \halb \langle U^\sigma, U_\sigma \rangle_{h(U)}
\end{align*}
where $ \langle \cdot\,,\, \cdot\rangle_h$ is the first fundamental form of the target manifold $N$ and $\sigma = 0,1, \cdots,m$.
Therefore, in local coordinates

\begin{align} \label{wmlag}
 S_{\text{WM}} (U) \equiv \halb \int_M \langle U^\sigma, U_\sigma \rangle_{h(U)} \,\bar{\mu}_g . 
\end{align}
After performing the first variation with respect to $U$,
the Euler-Lagrange equations in local coordinates take the following form 
\begin{align} \label{wmel_intro}
 \square_g U^i + \leftexp{(h)}{\Gamma}^i_{jk}(U) g^{\mu \nu}\ptl_\mu U^j \ptl_\nu U^k &= 0,
\end{align}
where $\leftexp{(h)}{\Gamma}$'s are the Christoffel symbols of the target $N$
\[ \leftexp{(h)}{\Gamma}^i_{jk} \fdg = \halb h^{il}\left( \ptl_k h_{lj} + \ptl_i h_{lk} - \ptl_l h_{ij}\right),\]
for $i,j,k,l = 1,2, \cdots, n$
and $\square_g \fdg = \grad_\nu \grad^\nu$, $\grad$ is the covariant derivative corresponding to the 
Levi-Civita connection defined on $(M,g)$.\\
Alternatively, one can formulate the Euler-Lagrange equations of \eqref{wmlag} extrinsically. Assume that the target manifold
$(N,h)$ is isometrically embedded into a Euclidean space $R^{n+1}$,
then Euler-Lagrange equations of the wave map action \eqref{wmlag} must satisfy
\begin{align} \label{wmel_ext1}
 \square_g\,U (P) \perp T_P\,N
\end{align}
for any point $P$ on $N$.
Then \eqref{wmel_ext1} is equivalent to 
\begin{align}\label{wmel_ext}
 \square_g \, U = \mathbf{Q}(U)(\grad U, \grad U)
\end{align}
where $\mathbf{Q}$ is the second fundamental form of $N \hookrightarrow \mathbb{R}^{n+1}.$  \\
The canonical energy-momentum tensor $\mathbf{S}$ of the wave map Lagrangian \eqref{wmlag} is
\begin{align}
 \mathbf{S}^\nu{\,_\mu} \fdg = \frac{\ptl \mathcal{L}_{\text{WM}}}{\ptl(\ptl_\nu U)} \,\ptl_\mu U - \mathcal{L}_{\text{WM}}\, \delta^\nu_\mu
\end{align}
and the symmetric energy-momentum tensor after the variation of $S_{\text{WM}}$ with respect to the metric $g$ is 
\begin{align}
 \mathbf{T}_{\mu \nu} \fdg = & \frac{\ptl\mathcal{L}_{\text{WM}}}{\ptl g^{\mu \nu}} - \halb g_{\mu \nu} \mathcal{L_\text{WM}} \notag \\
\equiv& \langle \ptl_\mu U , \ptl_\nu U \rangle_{h(U)} -\halb g_{\mu \nu} \langle \ptl^\sigma U ,\ptl_\sigma U \rangle_{h(U)}.
\end{align}
In view of the Rosenfeld-Belinfante theorem, it follows that
\[ \mathbf{S} \equiv \mathbf{T}. \]
Suppose the spacetime $M$ is foliated by the $t= \text{constant}$ Cauchy surfaces $\Sigma_t$, for some
time function $t$ and let $\mathbf{X}$ be the unit timelike normal to $\Sigma_t$, then we define the 
energy density $\mathbf{e}$
\begin{align*}
 \mathbf{e}(U) \fdg = \mathbf{T} \left(\mathbf{X},\mathbf{X}\right)
\end{align*}
and energy $E(U)(t)$
\begin{align}\label{wmener}
 E(U)(t) \fdg = \int_{\Sigma_t} \mathbf{e}\,\,\bar{\mu}_q
\end{align}
where $q$ is the induced spatial metric on $\Sigma_t$ after the canonical $m+1$ decomposition of $(M,g)$.
\subsection*{Scaling Symmetry}
In any local coordinate chart in $M$, if we scale the wave map 
\[ U (x^0,\cdots,x^m) \to U(d\, x^0,\cdots,d\,x^m) = \fdg U_d \]
for a dimensionless real parameter $d$, the wave maps equation \eqref{wmel_intro} is invariant. However the energy \eqref{wmener}
is invariant\footnote{more generally, $\Vert U_d\Vert_{\dot{H}^s(\Sigma)} =  d^{s-\frac{m}{2}}\Vert U\Vert_{\dot{H}^s(\Sigma)}$ } only in $2+1$ dimensions. 
Hence \eqref{wmel_intro} is referred to as energy \emph{critical} with respect to 
scaling for $m=2$, \emph{subcritical} for $m<2$ and
\emph{supercritical} for $m> 2$.
\subsection*{The Cauchy Problem}
Let $\Sigma$ be the initial data Cauchy surface and $\mathbf{X}$ be its unit normal, then
the Cauchy problem of wave maps is the following
\begin{equation}\label{wmcauchy}
\left. \begin{array}{rcl}
\square_g U^i + \leftexp{(h)}{\Gamma}^i_{jk} g^{\alpha \beta}\ptl_\alpha U^j \ptl_\beta U^k & =& 0 \,\,\, \text{on}\,\, M\\
\left.U\right|_{\Sigma}&=& U_0  \\
\left.\mathbf{X}(U)\right|_{\Sigma} & = & U_1 \end{array} \right\}
\end{equation}
such that 
\begin{align*}
U_0 \fdg \Sigma &\to N  \\
p & \to U_0(p)
\end{align*}
and
\begin{align*}
 U_1 \fdg \Sigma &\to T_{U_0} N \\
p &\to T_{U_0(p)} N
\end{align*}
for $ p \in  \Sigma.$

\section{Background and Overview of Previous Results}
Wave maps being the natural geometrical generalizations of the free wave equation and harmonic maps and the 
fact that their nonlinearity has special structure \footnote{ the equation \eqref{wmel_intro} satisfies the so called
null condition \cite{kl_mach93} } 
has resulted in their extensive study in the last three decades. In the 
following we shall give a brief overview of some of the main results that have been obtained for critical wave maps on the Minkowski space
\footnote{here we put the emphasis on the energy critical dimension of $m=2$, more comprehensive surveys can be found in Chapter 6
in Tao \cite{tao_book}, Chapters 7, 8 in Shatah-Struwe \cite{shatah_struwe}, Struwe \cite{struwe_wmsurvey} and
Tataru \cite{tat_wmsurvey}}. 
Let $M$ be the Minkowski space $\mathbb{R}^{m+1} $, then the Cauchy problem \eqref{wmcauchy} in the Cartesian coordinates $(t,x^1,x^2)$
reduces to 
\begin{equation} \label{wmcauchymin}
\left. \begin{array}{rcl}
\square_g U^i + \leftexp{(h)}{\Gamma}^i_{jk}(U) g^{\mu \nu}\ptl_\mu U^j \ptl_\nu U^k & =& 0 \,\,\, \text{on}\,\, \mathbb{R}^{m+1}\\
U(0,x)&=& U_0(x)  \\
U_t(0,x) & = & U_1(x) \end{array} \right\}
\end{equation}
and the energy
\[ E(U)(t) = \int_{\mathbb{R}^m} \left\Vert U_t \right\Vert^2_h + \left\Vert \grad_x U \right\Vert^2_h \,\,d\,x. \]
\subsection*{Local Existence}

\begin{itemize}
\item The wave map equations are a semi-linear system of equations, so local existence of solutions with smooth data is standard.
\item Let the initial data $(U_0,U_1)$ be in the Sobolev spaces $H^s \times H^{s-1}$, then for $s > \frac{m}{2} +1$ the local-wellposedness 
follows from standard energy methods.

\item For $s > \frac{m}{2}$ (upto critical regularity), using the fact that the wave maps equation satisfies the null condition
 local wellposedness has been proven for $m \geq 3$ Klainerman-Machedon\cite{kl_mach97} and later Klainerman-Selberg \cite{kl_selb} for $m=2$.
In the proof they used the $X^{s,b}$ spaces for the fixed point arguments. In this scenario, one should note that $H^s$ functions are continuous, therefore
 the image of the wave map $U$ is contained in a single chart of $N$, hence the problem becomes local in $N$. The global geometry of 
$N$ doesn't play a decisive role.
 \item For $s = \frac{m}{2}$, the problem is nonlocal and it depends on the global geometry on $N$. However, Tataru \cite{tat_isom} proved
local wellposedness at critical scaling assuming that $N$ isometrically embeds into a larger Euclidean space $\mathbb{R}^{n+1}$. At 
critical scaling small data small time of existence is equivalent to small data large time existence.

\end{itemize}
\subsection*{Global Existence}
\subsubsection*{Small Data Global Existence}
The small data global well-posedness has been obtained by Tataru in the Besov space $ (U_0,U_1) \in \dot{B}^{\frac{n}{2},1} \times 
\dot{B}^{\frac{n}{2}-1,1}$ for $m \geq 4$ in \cite{tat_besovh} and for $m =2,3$ in \cite{tat_besovl}. Due to the embedding $\dot{B}^{\frac{n}{2},1} \hookrightarrow  L^{\infty}$
smallness of the initial data ensures that the wave map stays in a chart in the manifold $N$. Therefore the problem is local in the 
target manifold $N$. The case of $m \geq 4$ can be handled by Strichartz estimates but it doesn't work for $m=2,3$. In the latter
case null frame spaces have been introduced to use a variant of $L^2L^{\infty}$ Strichartz estimate.

Tao proved global regularity for wave maps to $\mathbb{S}^n \subset \mathbb{R}^{n+1}$ with data in critical Sobolev spaces $H^s \times H^{s-1}$ 
for $m \geq 5$ using Strichartz estimates and microlocal gauge \cite{tao_wm1}. This result has been extended to more general targets by Klainerman-
Rodnianski (using the microlocal gauge) \cite{klain_rod_wm2001}, Statah-Struwe (using the Coulomb gauge) \cite{shatah_struwe_wm} and
Nahmoud - Stefanov - Uhlenbeck \cite{NSU_wm}.

Tao extended his result for wave maps to target $\mathbb{S}^n$  to lower dimensional cases $n=2,3$ using a combination of 
Tataru's null-frame spaces and a variant of Strichartz estimate, again using microlocal gauge to remove the ``bad'' terms \cite{tao_wm2}.

Tao's result for wave maps in lower dimensions has been extended by Krieger to wave maps with $\mathbb{H}^2$ targets for $m=3$ \cite{krieg_wmhigh} 
and later for $m=2$ \cite{krieg_wmcrit}. Instead of the microlocal gauge of Tao the Coulomb gauge of Statah-Struwe \cite{shatah_struwe_wm} was
used.

Local wellposedness at critical regularity is equivalent to global well-posedness. This was proven
by Tataru in \cite{tat_isom}.
\begin{thm}[Tataru \cite{tat_isom}]
 Let $m \geq 2$ and the manifold $N$ admits a uniform isometric embedding into Euclidean space $\mathbb{R}^{n+1}$. Then the wave maps
equation \eqref{wmcauchymin} is globally well-posed for initial data which is small in $\dot{H}^{\frac{m}{2}} \times \dot{H}^{\frac{m}{2}-1}.$
\end{thm}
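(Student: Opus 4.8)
The plan is to reduce the global statement to a small-data contraction argument carried out directly in scale-invariant, global-in-time function spaces adapted to $\square$, working with the extrinsic form \eqref{wmel_ext} of the equation and removing the non-perturbative part of the quadratic nonlinearity by a renormalization (gauge change). The uniform isometric embedding $N \hookrightarrow \mathbb{R}^{n+1}$ enters throughout: it makes the second fundamental form $\mathbf{Q}$ and all of its derivatives bounded, so that $\mathbf{Q}(U)(\nabla U, \nabla U)$ may be treated as a smooth coefficient composed with $U$ acting on a quadratic expression in $\nabla U$, with all the resulting Moser-type estimates closing with constants independent of where $U$ lives in $N$.

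First I would make the scaling reduction precise. Since both the equation \eqref{wmcauchymin} and the norm $\dot{H}^{m/2}\times \dot{H}^{m/2-1}$ are invariant under $U(t,x)\mapsto U(dt,dx)$, it suffices to produce, for data below an absolute size threshold, a solution on all of $\mathbb{R}^{1+m}$ together with a global bound $\Vert U\Vert_{S} \lesssim \Vert (U_0,U_1)\Vert_{\dot{H}^{m/2}\times \dot{H}^{m/2-1}}$ in a suitable space $S$ of functions on $\mathbb{R}^{1+m}$; ``local well-posedness at critical regularity'' is then the identical assertion, which is exactly why it is equivalent to global well-posedness. The space $S$ is chosen together with a companion space $\mathcal{N}$ for the inhomogeneous term so that the free evolution of data of size $\varepsilon$ is $O(\varepsilon)$ in $S$ and the energy (Duhamel) estimate $\Vert \square^{-1} F \Vert_S \lesssim \Vert F\Vert_{\mathcal{N}}$ holds. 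For $m\ge 4$ one can build $S$ from the energy norm and Strichartz norms alone; for the relevant low dimensions $m=2,3$ the Strichartz estimates are too weak, and I would instead use the null-frame spaces underlying Tataru's Besov-regularity theorem \cite{tat_besovl} (equivalently the spaces in Tao's work \cite{tao_wm2}), which supply a usable substitute for the missing $L^2_t L^\infty_x$ bound.

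The hard part will be that the quadratic nonlinearity is not perturbative relative to $\mathcal{N}$: the low-high frequency interaction in which the derivative lands on the low-frequency factor is resonant and is not controlled by $\Vert \cdot\Vert_{\mathcal{N}}$ --- the same obstruction that defeats a naive iteration even for $N=\mathbb{S}^n$. To get around it I would use that, after differentiating \eqref{wmel_ext}, the connection one-form $A_\mu := \mathbf{Q}(U)\,\partial_\mu U$ is antisymmetric in the appropriate tensorial sense, so the dangerous term has the structure of a magnetic potential paired with $\nabla U$; one then conjugates by a renormalizing factor $W$, a microlocally defined approximate solution of the parallel-transport equation along null directions (Tao's microlocal gauge \cite{tao_wm2}, or the Coulomb gauge of Shatah-Struwe \cite{shatah_struwe_wm} later used by Krieger), chosen so that $\tilde U := WU$ satisfies $\square \tilde U = (\text{terms controlled in }\mathcal{N})$. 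The work in this step is: (i) construct $W$ and show that $W$ and $W^{-1}$ are bounded on $S$ and on its frequency-localized pieces, using the uniform bounds on $\mathbf{Q}$; (ii) show that the commutator and error terms generated by the conjugation are genuinely perturbative; and (iii) prove the multilinear null-form estimates $S\times S\to \mathcal{N}$ for the remaining ``good'' nonlinearity, where the null condition recorded in \cite{kl_mach93} is essential. Getting the gauge construction and its mapping properties to be compatible with the structure of $S$, rather than just formally correct, is where the real difficulty lies.

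Once the renormalized equation has perturbative nonlinearity, the contraction mapping principle in a small ball of $S$ gives a unique fixed point with the claimed global bound whenever $\Vert (U_0,U_1)\Vert_{\dot{H}^{m/2}\times \dot{H}^{m/2-1}}$ is small enough. Uniqueness in the iteration class and Lipschitz dependence on the data in a slightly weaker topology come from running the same estimates on differences; persistence of $H^s\times H^{s-1}$ regularity and continuous dependence in the strong topology follow by rerunning the multilinear estimates with one factor placed at higher regularity. Undoing the renormalization returns a solution of \eqref{wmcauchymin}, which is global because the entire construction was carried out on $\mathbb{R}^{1+m}$.
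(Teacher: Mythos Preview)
The paper does not prove this theorem at all: it is stated in the background section as a result of Tataru, with a citation to \cite{tat_isom}, and no argument is given or sketched. There is therefore nothing in the paper to compare your proposal against.

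That said, your outline is a faithful high-level account of how the result is actually established in the literature: reduce by scaling to a global-in-time small-data problem, work in scale-invariant function spaces $S$ and $\mathcal{N}$ built from null-frame norms (since Strichartz alone fails for $m=2,3$), renormalize away the non-perturbative low-high interaction using the antisymmetry of the connection form, and close by contraction. The uniform isometric embedding is used exactly as you say, to make the Moser-type estimates for $\mathbf{Q}(U)$ uniform. If you were asked to supply a proof here, this is the right plan; just be aware that in the context of the present paper the theorem is quoted, not proved.
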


\subsubsection*{ Large Data Global Existence}
One strategy to study the large energy global existence of wave maps is to divide it into the following two parts.
This approach has proven to be particularly effective for wave maps with symmetry as shown in \cite{chris_tah1, jal_tah1}.

\begin{description}
\item[(C1) Non-concentration of energy ] The energy on a spacelike surface inside the past null cone of a point goes to zero (in a
limiting sense) as one approaches the tip of the cone.

\item[(C2) Small energy global existence ] For arbitrarily small initial energy the solution can be extended smoothly and globally from
smooth initial data.
\end{description}
As a consequence of \textbf{(C1)}, the energy on a spacelike surface in the past of every point can be assumed to be small enough so that
using \textbf{(C2)} one can extend existence of solutions beyond the hypothetical singularity. Furthermore, such 
(local) solutions can be glued together to obtain a global solution.

For wave maps on Minkowski space Christodoulou and Tahvildar-Zadeh \cite{chris_tah1}, and Shatah and Tahvidar-Zadeh \cite{jal_tah1, jal_tah}
have proved \textbf{(C1)} and \textbf{(C2)} for spherically symmetric and equivariant cases respectively. In both \cite{chris_tah1} and
\cite{jal_tah1} the target is assumed to be geodesically convex, which is necessary only for the resolution of \textbf{(C1)}.

\subsubsection{Equivariant Wave Maps on the Minkowski Space}

Let $(N,h)$ be a surface of revolution with the line element 
\[ d\, s^2_h = d\, \rho^2 + f^2(\rho) d\,\phi^2 \]
in $(\rho,\phi)$ coordinates, where $f(\rho)$ is an odd, smooth function with $f(0)=0,f_\rho(0)=1.$ Then the equivariant
ansatz for the wave map $U \fdg \mathbb{R}^{2+1} \to N,$
\[ U(t,r,\theta) = (u(t,r),k\,\theta)\] 
reduces the wave maps system to
\[ \square u = k^2\frac{f(u)f_u(u)}{r^2}\]
where $f_u(u)$ is the derivative of $f$ with respect to $u$.\\
The theorem of Shatah and Tahvildar-Zadeh \cite{jal_tah1} is as follows.

\begin{thm}[Tahvildar-Zadeh, Shatah]
 If $N$ is rotationally symmetric and geosedically convex, then the Cauchy problem \eqref{wmcauchy} for an equivariant
 wave map from $ \mathbb{R}^{2+1}\to N$ has a smooth solution for all time, and $u(r,t)/r$ is also smooth.
\end{thm}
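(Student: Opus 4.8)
The plan is to follow the two-part scheme \textbf{(C1)}--\textbf{(C2)} for the equivariant reduction. With the ansatz $U(t,r,\theta)=(u(t,r),k\theta)$ the system collapses to the single semilinear equation
\[ u_{tt}-u_{rr}-\tfrac{1}{r}\,u_r+k^2\,\frac{f(u)f_u(u)}{r^2}=0, \]
and the energy becomes
\[ E(u)(t)=\int_0^\infty\Bigl(u_t^2+u_r^2+k^2\,\frac{f(u)^2}{r^2}\Bigr)\,r\,dr, \]
which is conserved and scale invariant. First I would record the local theory: for smooth finite-energy data compatible with the axis (so that $u(t,0)=0$ and $u$ has the correct parity at $r=0$) the equation is locally well posed, the solution is smooth while it exists, and it extends smoothly beyond a time $T$ unless the energy concentrates at a spacetime point; by finite speed of propagation and conservation of $E$, this putative first singular point may be normalized to the vertex $(T,0)$. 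The whole problem is thus reduced to ruling out such concentration.

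The heart of the matter is \textbf{(C1)}. Let $K$ be the backward solid light cone with vertex $(T,0)$, and for $t<T$ let $\mathcal{E}(t)$ be the energy of $u$ on the time slice $\{r\le T-t\}$. Integrating the divergence-free energy current $J_\mu=\mathbf{T}_{\mu\nu}(\partial_t)^\nu$ over a truncated subcone gives the flux identity: $\mathcal{E}(t)$ is nonincreasing in $t$, the flux through the lateral null boundary between two times is a nonnegative perfect square, and hence $\mathcal{E}_\infty:=\lim_{t\to T}\mathcal{E}(t)$ exists with the total null flux into the vertex finite (so the flux over thin collars tends to $0$). One must then show $\mathcal{E}_\infty=0$. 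For this I would pass to self-similar variables $y=r/(T-t)$, $\tau=-\log(T-t)$, converting $K$ into a cylinder $\{0\le y\le1\}\times\mathbb{R}_\tau$, and use a Morawetz-type multiplier — essentially the scaling/conformal vector field — to produce a monotone functional whose time derivative dominates a weighted spacetime integral of the derivatives of $u$. The finiteness of this integral forces $u$, along a sequence $\tau_n\to\infty$, to converge in the interior of the cylinder to a $\tau$-independent finite-energy solution of the reduced equation on $(0,1)$ with $u=0$ at $y=0$ — that is, to a nontrivial equivariant harmonic map from a flat disc (in the self-similar conformal metric) into $N$. At exactly this point the hypothesis that $N$ is rotationally symmetric and \emph{geodesically convex} is used: convexity forces the sign of $f(u)f_u(u)$ to be such that no nonconstant finite-energy equivariant harmonic map with $u(0)=0$ exists, so the limit is constant, hence $\equiv 0$; tracing this back through the monotonicity formula yields $\mathcal{E}_\infty=0$.

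With non-concentration established I would invoke the small-energy part \textbf{(C2)}: there is $\varepsilon_0>0$ so that equivariant data of energy $<\varepsilon_0$ on a disc generate a smooth solution throughout the forward domain of dependence (the equivariant small-data theory, proved by energy methods adapted to the $1/r^2$ potential). Picking $t_\ast<T$ close enough to $T$ that $\mathcal{E}(t_\ast)<\varepsilon_0$ and re-solving from the slice $\{r\le T-t_\ast\}$ extends $u$ smoothly across the vertex, contradicting the failure of continuation; therefore the solution is global and smooth. For the final claim that $u(t,r)/r$ is smooth, note that finiteness of $E$ already gives $f(u)/r\in L^2$, hence $u(t,0)=0$; combining this with the parity of $u$ at the axis, the equation, and the expansion $f(u)=u+O(u^3)$, the quantity $v=u/r$ solves a regular semilinear wave equation in $2{+}1$ dimensions, and standard energy/elliptic bootstrapping upgrades $v$ to $C^\infty$ up to and including $r=0$.

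The step I expect to be the main obstacle is \textbf{(C1)}: one must choose the Morawetz multiplier so that the boundary contributions at $r=0$ and along the null cone are actually controlled by the (finite) flux, extract the self-similar limit with enough regularity to pass to the limit inside the nonlinearity, and then apply geodesic convexity precisely where it rules out the limiting harmonic map — the only place the convexity hypothesis is genuinely needed and without which self-similar blow-up is not excluded. The local theory, the small-data theory, and the axis regularity of $u/r$ are comparatively routine given the equivariant structure.
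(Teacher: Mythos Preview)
The paper does not give a proof of this theorem; it is quoted in the background survey of previous results and attributed to Shatah and Tahvildar-Zadeh \cite{jal_tah1}, so there is no ``paper's own proof'' to compare against. Your overall architecture---local theory plus the \textbf{(C1)}--\textbf{(C2)} split---is exactly the scheme the paper describes, and your handling of local well-posedness, \textbf{(C2)}, and the axis regularity of $u/r$ is reasonable.

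The step I would flag is your mechanism for \textbf{(C1)}. You pass to self-similar variables $y=r/(T-t)$, $\tau=-\log(T-t)$, extract a $\tau$-independent limit $\phi(y)$, and call it ``a nontrivial equivariant harmonic map from a flat disc (in the self-similar conformal metric).'' But a $\tau$-independent profile satisfies the self-similar ODE
\[
(1-y^2)\phi'' + \bigl(y^{-1}-2y\bigr)\phi' \;=\; k^2\,\frac{f(\phi)f_u(\phi)}{y^2}\quad\text{on }(0,1),
\]
which is \emph{not} the harmonic-map equation $\phi''+y^{-1}\phi'=k^2 f(\phi)f_u(\phi)/y^2$; the two differ by the light-cone degeneracy at $y=1$, and geodesic convexity does not obviously exclude nontrivial solutions of the former the way it excludes nontrivial finite-energy equivariant harmonic maps. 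Struwe's bubbling theorem---which the paper cites as a \emph{later} refinement---does produce a genuine harmonic map, but by parabolic \emph{rescaling} rather than self-similar variables, and the limit is a map from $\mathbb{R}^2$ (hence $S^2$), not from a disc. The original Shatah--Tahvildar-Zadeh argument, and the argument the paper itself carries out in the self-gravitating setting (Chapter~4), avoids any limit extraction: one stays in physical coordinates and uses the multipliers $\partial_t$, $r\partial_r$, and $r^a\partial_r$ together with a lower-order correction term to obtain integral identities in the truncated cone; geodesic convexity (or the weaker Grillakis condition $f(u)f_u(u)u+f^2(u)>0$) enters directly as the sign that makes the bulk term coercive, giving non-concentration without passing to a limiting object.
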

\noindent
The geodesic convexity condition is equivalent to
\begin{align}\label{geoconvex}
f_u(u)f(u) >0 \,\,\text{for}\,\, u>0.
 \end{align}
This condition has been relaxed later by Grillakis \cite{grillakis} to the following 
\[ f^2(u) + uf_u(u)f(u) >0\,\, \text{for}\,\,u>0.\]
This result has further been improved by Struwe \cite{struwe_equi} using the techniques of bubbling.

\begin{thm}[Struwe \cite{struwe_equi}]
 Let U be a (smooth) co-rotational solution to \eqref{wmcauchy} blowing up at time $t_0$. Then there exist sequences
 $r_i \to 0^-$ and $t_i \to t_0^+$ such that 
 \[ U_i(t,x)\fdg = U(t_i+r_it,r_ix) \to U_{\infty}(t,x) \]
 strongly in $H^1_{\text{loc}}(-1,1 \times \mathbb{R}^2)$, where $U_\infty$ is a non-constant, time-independent solution
 of \eqref{wmcauchy} giving rise to a non-constant, smooth equivariant harmonic map $\olin{U}\fdg S^2 \to N$.
\end{thm}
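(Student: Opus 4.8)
The plan is to follow Struwe's bubbling argument in its now-standard form, adapted to the co-rotational (equivariant) reduction $\square u = k^2 f(u)f_u(u)/r^2$ on $\mathbb{R}^{2+1}$. First I would set up the argument by contradiction: assume the smooth co-rotational solution $U$ first fails to be smooth at time $t_0$, and fix the backward light cone with vertex at $(t_0, x_0)$ for an appropriate $x_0$. Using finite speed of propagation and the conservation of energy together with the non-concentration of energy in the cone (which in the equivariant setting is a consequence of the monotonicity of the local energy on truncated cones, i.e. the content of \textbf{(C1)} for the geodesically convex / Grillakis case, though here one does not even assume non-concentration — blow-up is only \emph{assumed}), I would identify a sequence of times $t_i \to t_0^+$ and scales $r_i \to 0^+$ that capture the finest concentration scale. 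The natural choice is to let $r_i$ be determined by where a fixed fraction of the energy concentrates at time $t_i$, e.g. by the condition that the energy of $U(t_i, \cdot)$ in a ball of radius $r_i$ equals half the critical threshold energy, while the scaling invariance of the energy guarantees the rescaled maps $U_i(t,x) = U(t_i + r_i t, r_i x)$ have uniformly bounded energy on parabolic cylinders.

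Next I would extract a weak $H^1_{\mathrm{loc}}$ limit $U_\infty$ of the sequence $U_i$ on $(-1,1)\times\mathbb{R}^2$; the main work is to upgrade this to \emph{strong} convergence and to identify $U_\infty$ as a nontrivial static solution. The key tools are: (a) local energy monotonicity inequalities for the equivariant wave map, giving control of the space-time flux of energy through lateral boundaries of truncated cones, which forces the time derivative $\partial_t U_i$ to go to zero in $L^2_{\mathrm{loc}}$ (so the limit is time-independent); (b) compensated-compactness / div-curl type control on the nonlinearity $k^2 f(u)f_u(u)/r^2$, exploiting that in the equivariant reduction the "potential" $f(u)f_u(u)/r^2$ is controlled pointwise by the energy density away from the axis, and near the axis one uses the Hardy-type inequality coming from $f(0)=0$, $f_\rho(0)=1$ to keep $u(t,r)/r$ bounded; (c) a small-energy regularity (or $\varepsilon$-regularity) statement for equivariant wave maps, which rules out the limit being trivial by showing that if no bubble formed the solution would be smooth past $t_0$, contradicting the blow-up assumption. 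The choice of $r_i$ in the previous paragraph is precisely what prevents $U_\infty$ from being constant: a definite amount of energy $\geq c > 0$ survives in every ball of fixed radius around the origin in the rescaled picture.

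Finally I would show that the time-independent $U_\infty$, being a finite-energy co-rotational static solution of the wave maps equation on $\mathbb{R}^2$, is nothing but a (nonconstant, smooth) equivariant harmonic map, and that via the conformal invariance of the Dirichlet energy in two dimensions and the removable-singularity theorem for finite-energy harmonic maps (Sacks-Uhlenbeck), it extends to a smooth equivariant harmonic map $\olin{U}\colon S^2 \to N$. In the equivariant reduction this amounts to checking that $u_\infty(r)$ solves the ODE $r^2 u_{rr} + r u_r = k^2 f(u_\infty)f_u(u_\infty)$ with the right behavior as $r\to 0$ and $r\to\infty$ (so that the image closes up at the two poles), and that the energy is finite; smoothness then follows from elliptic regularity for the harmonic map system.

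I expect the main obstacle to be step (b)/(a) of the second paragraph: proving the \emph{strong} $H^1_{\mathrm{loc}}$ convergence, i.e. that no energy is lost in the limit at intermediate scales and, in particular, that $U_\infty$ is not constant while simultaneously the convergence is strong enough to pass the nonlinear term to the limit. This is where one genuinely needs the local energy flux monotonicity (to kill $\partial_t U_i$) combined with a careful treatment of the nonlinearity near the symmetry axis $r=0$, where the factor $1/r^2$ is singular; the equivariant structure $f(0)=0$, $f_\rho(0)=1$ and the resulting Hardy inequality are exactly what makes this controllable, and this is the one place where the rotational symmetry of $N$ (rather than mere geodesic convexity or the Grillakis condition) is genuinely used.
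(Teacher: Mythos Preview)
The paper does not contain a proof of this theorem. It is quoted in the background section as a result of Struwe \cite{struwe_equi}, stated without proof and used only to motivate the blow-up criterion picture (``if the geometry \ldots does not admit a non-constant harmonic map then, by contradiction, one can rule out energy concentration''). So there is nothing in the paper to compare your proposal against.

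That said, your sketch is a fair outline of Struwe's actual argument. The one point where your emphasis differs from Struwe is in step~(a): the mechanism that forces $\partial_t U_i \to 0$ in $L^2_{\mathrm{loc}}$ is not the lateral-flux monotonicity per se, but rather the \emph{non-concentration of time-averaged kinetic energy} (the Morawetz-type estimate $\frac{1}{|t|}\int_{K(t)} |\partial_t u|^2 \to 0$), which in the present paper appears as Lemma~\ref{kin} in the self-gravitating setting and goes back to Shatah--Tahvildar-Zadeh on Minkowski space. This is what lets Struwe pick the scales $r_i$ and times $t_i$ so that the rescaled kinetic energy is small on a whole time interval, not just at a single slice, and is the genuine input that makes the limit static. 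Your step~(b) (compensated compactness/div--curl) is not really needed in the equivariant reduction: once $\partial_t U_i \to 0$ strongly, the strong $H^1_{\mathrm{loc}}$ convergence and passage to the limit in the scalar ODE follow from more elementary arguments (Fatou plus the energy identity, and Rellich away from the axis). The identification of $U_\infty$ with an $S^2$-harmonic map via Sacks--Uhlenbeck removable singularities is exactly as you describe.
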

This result serves as a blow-up criterion for equivariant wave maps. In particular, if the geometry of domain and target manifolds or the
energy of the system does not admit a non-constant harmonic map then, by contradiction, one can rule out energy concentration. 

The global existence for small energy equivariant wave maps has been proven initially using the representation formula for 
inhomogeneous wave equation in \cite{jal_tah1} and later by a version of Strichartz estimate (Theorem 8.1, \cite{shatah_struwe}).
In \cite{shatah_struwe}, the equivariant wave maps equation was transformed into a critical $4+1$  wave equation \footnote{more
about this is discussed in Chapter 4}. The precise statement is as follows.
\begin{thm}[Theorem 8.1, Shatah-Struwe]
For initial energy $E<\eps$ the equivariant wave maps equation can be globally and smoothly extended from smooth initial data
\end{thm}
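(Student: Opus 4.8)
The plan is to reduce the equivariant wave maps equation to a semilinear wave equation in higher spatial dimension with an energy-critical, essentially cubic nonlinearity, and then to run a Strichartz-based fixed-point argument in which the smallness of the initial energy places the solution in a small ball of the relevant space-time Lebesgue space. Recall from the equivariant reduction that the profile $u(t,r)$ solves
\[ u_{tt} - u_{rr} - \tfrac1r u_r + \tfrac{k^2}{r^2}\,g(u) = 0, \qquad g(u) := f(u) f_u(u), \]
and, since $f$ is odd and smooth with $f(0)=0$, $f_u(0)=1$, one has $g(u) = u + h(u)$ with $h(u) = O(|u|^3)$ near $u=0$. The substitution $u = r^k v$ converts the operator $-\partial_r^2 - \tfrac1r\partial_r + \tfrac{k^2}{r^2}$ into $r^k$ times the radial Laplacian in $2k+2$ spatial dimensions, so that $v$, viewed as a radial function, satisfies
\[ \square_{2k+2}\, v = -\,k^2\,\frac{h(r^k v)}{r^{k+2}} =: F(r,v), \qquad F(r,v) = -k^2\big(c_1\, r^{2k-2} v^3 + \cdots\big). \]
For the corotational case $k=1$ this is a wave equation on $\mathbb{R}^{4+1}$ with a cubic nonlinearity, which is exactly energy critical; a Hardy-type identity (the boundary term vanishes because $u(0)=0$ and $u$ decays) shows that for small energy, where $\|u\|_\infty$ is small and $f(u)\approx u$, the wave map energy of $u$ is comparable to $\|v[0]\|_{\dot H^1\times L^2}$. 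Thus ``small initial energy'' translates into ``small critical Sobolev norm'' for $v$.

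Next I would carry out the contraction argument for $v$. In $\mathbb{R}^{4+1}$ the exponent pair $(5,5)$ is wave-admissible at the $\dot H^1$ scaling, so the Strichartz estimate gives
\[ \|v\|_{L^5_{t,x}} + \|v\|_{C_t\dot H^1} + \|\partial_t v\|_{C_t L^2} \;\lesssim\; \|v[0]\|_{\dot H^1\times L^2} + \|\square v\|_{L^{5/3}_{t,x}}, \]
while the cubic nonlinearity obeys $\|v^3\|_{L^{5/3}_{t,x}} = \|v\|_{L^5_{t,x}}^3$ and, more generally, the Lipschitz bound $\|F(r,v)-F(r,w)\|_{L^{5/3}_{t,x}} \lesssim (\|v\|_{L^5_{t,x}}^2+\|w\|_{L^5_{t,x}}^2)\|v-w\|_{L^5_{t,x}}$ holds once $v,w$ are known to be uniformly small. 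Hence the solution map $w\mapsto v$ is a contraction on a small ball of $L^5_{t,x}\cap C_t\dot H^1\cap C^1_t L^2$ provided $\|v[0]\|_{\dot H^1\times L^2}\le\eps$ is small enough, yielding a unique \emph{global} solution $v$.

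Finally I would upgrade to smoothness and translate back. Differentiating the equation and propagating higher $\dot H^s$ norms of $v$ along the evolution — the point being that $\|v\|_{L^5_{t,x}}$ is globally small, so a Gronwall argument keeps every higher norm finite — gives $v\in C^\infty$; checking parity so that $v(t,\cdot)$ extends to a genuine radial $C^\infty$ function on $\mathbb{R}^{2k+2}$ then shows $u = r^k v$ and $u/r^k$ are smooth, so that $U(t,r,\theta)=(u(t,r),k\theta)$ is a global smooth equivariant wave map.

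I expect the main obstacle to be twofold. First, the nonlinearity $F(r,v) = -k^2 h(r^k v)/r^{k+2}$ is only cubic near $v=0$ and carries the weight $r^{2k-2}$, so one must use the smallness of the energy — together with the radial Sobolev embedding, which controls $v$ away from the axis — to guarantee that $v$ stays in the regime where $F$ behaves like its model cubic term and the weight is harmless (for $k\ge2$ the factor $r^{2k-2}$ actually helps near the axis, but the behavior for large $r$ must still be watched). Second, the reduction and the backward translation require care at $r=0$, both to identify ``small wave map energy'' with ``small $\dot H^1\times L^2$ norm of $v$'' via the Hardy inequality and to recover smoothness of $u/r^k$ from the smoothness and correct parity of $v$. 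The Strichartz machinery itself is by now standard and is not the real difficulty.
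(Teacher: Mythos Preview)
This theorem is not proved in the paper itself; it is cited from Shatah--Struwe \cite{shatah_struwe}, Theorem~8.1, and the paper only sketches the strategy in words: ``the equivariant wave maps equation was transformed into a critical $4{+}1$ wave equation'' and the result then follows from ``a version of Strichartz estimate.'' In the Outlook chapter the paper revisits the substitution $u=rv$ explicitly (for the self-gravitating case), confirming that the intended reduction is exactly the one you describe.

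Your proposal is thus essentially the Shatah--Struwe argument the paper is quoting: the substitution $u=r^k v$ sends the equivariant equation to the radial energy-critical semilinear wave equation in $2k{+}2$ space dimensions, and a Strichartz-based contraction in $L^5_{t,x}$ (for $k=1$) closes under the small-energy assumption. Your identification of the two real issues --- controlling the full nonlinearity $h(r^k v)/r^{k+2}$ rather than just its cubic part, and the Hardy/parity bookkeeping at $r=0$ needed to match the wave-map energy with $\|v[0]\|_{\dot H^1\times L^2}$ and to recover smoothness of $u/r^k$ --- is accurate and is precisely where the work lies in the Shatah--Struwe proof. There is nothing further to compare against in the present paper.
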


\subsubsection{Spherically Symmetric Wave Maps on Minkowski Space}
A similar statement has been proven for spherically symmetric wave maps on Minkowski background by Christodoulou and Tahvildar-Zadeh.
A wave map $U \fdg \mathbb{R}^{2+1} \to N$ is spherically symmetric if it depends only on $t$ and $r$. Therefore the Cauchy 
problem \eqref{wmcauchymin} reduces to
\begin{equation}\label{chris_spher}
\left. \begin{array}{rcl}
-U_{tt}^i+ U^i_{rr} + \frac{1}{r}U^i_r + \leftexp{(h)}{\Gamma}^i_{jk}(U) (-U_t^j U_t^k + U^j_r U^k_r) & =& 0 \,\,\, \text{on}\,\, \mathbb{R}^{2+1}\\
U(0,x)&=& U_0(x)  \\
U_t(0,x) & = & U_1(x) \end{array} \right\}
\end{equation}
The results of Christodoulou and Tahvildar-Zadeh\cite{chris_tah1} are as follows. Let $N$ be a complete, connected Riemannian manifold
satisfying the following conditions
\begin{itemize}
\item[(1)]  There exists an orthonormal frame of smooth vector fields $\Omega_A$ on $N$ whose
structure functions $e^C_{AB}$ are bounded.

\item[(2)] For each $p\in N$, let $\Sigma(p,s)$ be the geodesic sphere of radius $s$ centered
at p, and let $k_{AB}$ be its second fundamental form. Then there exist constants $c_1$ and $c_2$ such that 
\[ s \lambda_{\text{min}} \geq c_1 \,\,\text{and}\,\, s \lambda_{\text{max}} \leq c_2 (1+s),\]
where $\lambda_{\text{min}}$ and $\lambda_{\text{max}}$ are respectively the smallest and largest eigen values of $k_{AB},$
\end{itemize}
then we have the Theorems \ref{chris_1} and \ref{chris_2} based on the conditions (1) and (2) respectively.
\begin{thm}[Non-concentration of energy] \label{chris_1}
 Let $N$ be a Riemannian manifold satisfying (2), and let $U\fdg M \to N$ be a spherically symmetric wave map, with regular
 Cauchy data prescribed at $t=-1$ surface and the first possible singularity at the origin of the spacetime $M$. Then the
 energy of the map $E(t)$ cannot concentrate, i.e.,$E(t) \to 0$ as $t \to 0$.
\end{thm}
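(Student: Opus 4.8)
The plan is the classical backward-light-cone argument: establish energy monotonicity and flux decay, show the energy cannot concentrate away from the axis, and finally use hypothesis (2) on the target to remove the energy near the axis. First I would reduce to a local statement. By finite speed of propagation one may assume the Cauchy data on $t=-1$ are compactly supported, so that $O=(0,0)$ is the only possible singular point, and it suffices to work in the truncated solid backward cone $D_{t_1}=\{(t,r):t_1\le t<0,\ 0\le r<-t\}$, with null mantle $\mathcal M$ and time slices $C_t=\{r\le -t\}$. Set $E(t)=\int_{C_t}\mathbf{e}(U)\,\bar{\mu}_q$. Integrating $\grad^\mu(\mathbf{T}_{\mu\nu}(\ptl_t)^\nu)=0$ over $D_{[t_1,t_2]}$ gives the flux identity $E(t_1)=E(t_2)+\mathrm{Flux}_{[t_1,t_2]}(\mathcal M)$, in which the mantle flux is a non-negative integral of the null derivative of $U$. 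Hence $E(t)$ is monotone non-increasing as $t\uparrow 0$, converges to some $E_\infty\ge 0$, and the flux through the part of $\mathcal M$ between $t$ and $0$ tends to $0$ as $t\to0^-$. The theorem is equivalent to the statement $E_\infty=0$.

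Second, I would show that the energy does not concentrate away from the axis: for each $\eps\in(0,1)$, $\int_{\eps|t|\le r\le |t|}\mathbf{e}(U)(t,r)\,r\,dr\to0$ as $t\to0^-$. The tool is a Morawetz-type multiplier, i.e. a causal vector field $Y$ on $D_{t_1}$ that degenerates on the axis, for which the current $\mathbf{T}_{\mu\nu}Y^\nu$ has favorably-signed divergence; integrating that divergence over the solid region between two cone slices, the bulk term is non-negative and the boundary terms consist of the (decaying) mantle flux plus energy fluxes across the slices weighted by a factor bounded below on $\{r\ge\eps|t|\}$. Because $U$ is spherically symmetric the equation is essentially one-dimensional in space with a singular potential, which keeps the error terms coming from the target Christoffel symbols under control in this step. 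The upshot is that, modulo quantities going to $0$ with the flux, all surviving energy sits in the central region $\{r<\eps|t|\}$.

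Third — and here hypothesis (2) enters — I would control the central energy by convexity of the distance function on $N$. Since $U(t,0)$ stays regular for $t<0$, one first checks, using the flux decay, that $q:=\lim_{t\to0^-}U(t,0)$ exists; put $d=\mathrm{dist}_N(\cdot\,,q)$ and $\psi=d\circ U$, so $\psi(t,0)\to0$. As $U$ is a wave map, $\square_g\psi=(\grad^2 d)(U^\sigma,U_\sigma)$, and the bounds $s\lambda_{\min}\ge c_1$, $s\lambda_{\max}\le c_2(1+s)$ on the geodesic sphere $\Sigma(q,s)$ of radius $s=\psi$ say precisely that $\grad^2 d>0$ with controlled size along that sphere, giving the pointwise bound $\square_g\psi\ge c_1\psi^{-1}|U_r|_h^2-c_2(1+\psi)\psi^{-1}|U_t|_h^2$. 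Pairing the identity $\psi\,\square_g\psi=\grad^\mu(\psi\,\grad_\mu\psi)-\langle\grad\psi,\grad\psi\rangle_g$ with this lower bound, using $|\grad\psi|\le|\grad U|_h$, and integrating over the central solid sub-cone $\{(s,r): t\le s<0,\ 0\le r<\eps|s|\}$ bounds $\iint|U_r|_h^2$ there by $\iint(1+\psi)|U_t|_h^2$ plus boundary terms; the boundary terms on the inner face $r=0$ and the outer face $r=\eps|s|$ are dominated by the flux tail and by $E(t)$ times $\sup\psi$, while the $|U_t|_h^2$ spacetime integral over this thin region is $O(|t|)\cdot E_0$. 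Combining with step two forces $E(t)\to0$, hence $E_\infty=0$.

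The step I expect to be the main obstacle is the third: making (2) do its job uniformly on the shrinking central cone. Because of the Lorentzian signature the term $-\langle\grad\psi,\grad\psi\rangle_g$ is not sign-definite and must be absorbed against the $|U_t|_h^2$ contribution generated by $\grad^2 d$; one also has to control $\sup\psi$ on the central cone — i.e. rule out that the image of the collapsing slices escapes to infinity in $N$ — and to handle the $\psi^{-1}$ curvature factor together with the $r^{-1}$ degeneracy of the wave operator at the axis without losing smallness. Without a convexity-type hypothesis like (2), a target admitting a non-constant, finite-energy harmonic map $\mathbb R^2\to N$ could trap a fixed amount of energy near the tip, which is exactly the scenario (2) excludes.
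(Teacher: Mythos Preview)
This theorem is not proved in the paper. It appears in Section~1.2 (``Background and Overview of Previous Results'') as a statement of Christodoulou and Tahvildar-Zadeh's result, cited from \cite{chris_tah1}, with no accompanying proof. The paper's own non-concentration theorem is the self-gravitating \emph{equivariant} result (Theorem~\ref{theorem_ener_nonconc}), proved in Chapter~4 under the Grillakis condition on the target; that proof is structurally different from what you outline, and in any case does not apply to the spherically symmetric flat-background statement you are attempting.

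That said, your three-step outline is broadly in the spirit of the original Christodoulou--Tahvildar-Zadeh argument: monotonicity and flux decay from the $\partial_t$ multiplier, annular non-concentration from a second (Morawetz-type or null) identity, and a final step exploiting the target geometry near the axis. Two remarks on your step~3. First, the Hessian of the distance function $d(\cdot,q)$ vanishes in the radial direction in $N$, so your inequality $\square_g\psi \ge c_1\psi^{-1}|U_r|_h^2 - c_2(1+\psi)\psi^{-1}|U_t|_h^2$ is not right as written: only the components of $U_r,U_t$ \emph{tangent} to the geodesic sphere $\Sigma(q,\psi)$ appear on the right, and the radial components must be handled separately (this is where the actual work in \cite{chris_tah1} lies). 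Second, the original argument does not pair $\psi\,\square_g\psi$ directly but rather builds a pointwise Gr\"onwall-type inequality for null-cone quantities analogous to the $\mathcal{A},\mathcal{B}$ of Lemma~\ref{ann} in this paper; your integrated-identity route may work but would need the missing tangential/radial decomposition made explicit before the absorption argument can close.
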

\begin{thm}[Small energy global existence] \label{chris_2}
 Let $N$ be a Riemannian manifold satisfying (1). Then there exists an $\eps$ depending only on the properties of $N$,
 such that any spherically symmetric wave map $U \fdg M \to N$ with regular Cauchy data of energy $E_0 < \eps^2$ 
 prescribed at $t=0$, is regular for all time.
\end{thm}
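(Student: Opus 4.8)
\noindent\textit{Plan of proof.}
The plan is to argue by contradiction. If some solution with $E_0<\varepsilon^2$ failed to be globally regular there would be a first singular point; by finite speed of propagation and the translation and scaling symmetries of \eqref{wmcauchymin} we may take this point to be the origin of Minkowski space and work inside its backward light cone $K=\{(t,x):0\le t<1,\ |x|\le 1-t\}$, after a harmless rescaling placing the vertex at $(1,0)$. The aim is to show that smallness of the energy in $K$ forces a uniform bound on the solution up to the vertex, so that the semilinear local existence theory extends it across the origin, a contradiction.

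First I would \emph{linearise the target geometry} using hypothesis (1). Writing the components of $dU$ in the global orthonormal frame $\{\Omega_A\}$ as $W^A_\mu=\langle\partial_\mu U,\Omega_A(U)\rangle_{h(U)}$, so that $\partial_\mu U=W^A_\mu\,\Omega_A(U)$ and no coordinate chart on $N$ is needed, the spherically symmetric system \eqref{chris_spher} together with the structure equations of $\{\Omega_A\}$ becomes a first order system for $(W^A_t,W^A_r)$ of the schematic form
\begin{align*}
\partial_t W^A_r-\partial_r W^A_t &= e^A_{BC}(U)\,W^B_t W^C_r,\\
\partial_t W^A_t-\partial_r W^A_r-\tfrac{1}{r}W^A_r &= e^A_{BC}(U)\,\bigl(W^B_r W^C_r-W^B_t W^C_t\bigr),
\end{align*}
in which the only residue of the geometry of $N$ is the structure functions $e^A_{BC}$, which are \emph{bounded} by assumption. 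Equivalently one has a system of $2{+}1$ dimensional wave equations $\square W^A=e\cdot\partial W\cdot\partial W$ with a nonlinearity of \emph{null} type; passing to the frame components, rather than working with $U$ directly, is exactly what exposes this null structure and makes the borderline two dimensional analysis feasible.

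Next I would record the \emph{energy--flux identity} on truncated backward cones: contracting the energy--momentum tensor $\mathbf{T}$ with $\partial_t$ and integrating over $K\cap\{t_1\le t\le t_2\}$ yields $E(t_2)+\mathrm{Flux}(t_1,t_2)=E(t_1)$, where $E(t)$ is the energy of $U$ on the disc $\{|x|\le 1-t\}$ and the flux through the null mantle is nonnegative. Hence $E(t)$ is nonincreasing, $E(t)\le E_0<\varepsilon^2$ throughout $K$, and the total mantle flux is likewise $<\varepsilon^2$; this is the only place where the smallness of $E_0$ enters quantitatively. Then comes the \emph{main pointwise estimate and the bootstrap}. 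Applying the representation formula for $\square$ in $2{+}1$ dimensions to the $W^A$--system, together with Hardy and trace inequalities on the cones and the flux decay just obtained, one seeks an a priori bound of the form $\sup_{K}|x|\,|\partial U|^2\le c\,\varepsilon$, equivalently a bound on a scale invariant norm $\|\partial U\|_{X(K)}$. This is run as a continuity argument: one posits a bootstrap constant $M$ for $\|\partial U\|_{X(K)}$, estimates the quadratic nonlinearity by $c\,\varepsilon\,M$ (the factor $\varepsilon$ coming from the flux bound via Cauchy--Schwarz), and chooses $\varepsilon$ small relative to the constants of $N$, so that the resulting bound is strictly below $M$. This closes the argument and yields uniform control of $\partial U$, and of $U/r$ near the axis, on all of $K$.

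Finally, with $\partial U$ (and $U/r$) bounded up to the vertex, the standard semilinear local existence theory applied to a Cauchy slice just below $t=1$ produces a smooth extension of $U$ across $(1,0)$, contradicting the choice of first singular point; hence no such point can exist and the solution is global. \textbf{The main obstacle} is the pointwise estimate near the vertex: in $2{+}1$ dimensions the fundamental solution of $\square$ is only mildly singular --- it does not concentrate on the light cone, unlike in $3{+}1$ --- so pointwise control cannot be read off directly, and one must genuinely exploit the null structure of the nonlinearity and balance it, through sharp Hardy-type inequalities, against the singular $\tfrac{1}{r}$ coefficient on the axis, uniformly as the base disc of the truncated cone shrinks to a point; getting the powers of $r$ and the logarithmic losses to cancel is the crux.
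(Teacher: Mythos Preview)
The paper does not prove this theorem at all: it is quoted in Section~1.2 as background from Christodoulou--Tahvildar-Zadeh \cite{chris_tah1}, stated without proof and attributed entirely to that reference. There is therefore no ``paper's own proof'' to compare your proposal against.

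That said, your sketch is a recognisable outline of the Christodoulou--Tahvildar-Zadeh argument: passing to frame components $W^A_\mu$ via hypothesis~(1) to obtain a system with bounded coefficients and null-form nonlinearity, the energy--flux monotonicity on truncated cones, and a bootstrap based on pointwise estimates coming from the $2{+}1$ representation formula. The identification of the main difficulty --- extracting enough decay from the weakly singular $2{+}1$ fundamental solution, balanced against the $1/r$ axis singularity --- is also correct. What is missing, if you intend this as more than an outline, is the actual content of the ``main pointwise estimate'': the precise scale-invariant norm $X(K)$, the specific Hardy-type inequality on the cone, and the mechanism by which the null structure turns the quadratic term into $c\,\varepsilon\,M$ rather than $c\,M^2$. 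In the original paper these steps occupy the bulk of the work and are not automatic; your proposal names them but does not carry them out. As a plan it is sound; as a proof it is still a reference to \cite{chris_tah1}.
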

\noindent
These results combine to give the following theorem. \\
\begin{thm}[Large energy global existence]
 The Cauchy problem \eqref{chris_spher}, for a spherically symmetric wave map $U$ from the Minkowski space $R^{2+1}$ into
 a smooth, complete and connected Riemannian manifold $(N,h)$ satisfying the conditions 1 and 2, has a smooth solution
 defined for all time, regardless of the size of the data.
\end{thm}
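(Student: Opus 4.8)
The plan is to combine the two results already established above — non-concentration of energy (Theorem \ref{chris_1}, which uses hypothesis (2) on the geodesic spheres of $N$) and small-energy global regularity (Theorem \ref{chris_2}, which uses hypothesis (1) on the orthonormal frame of $N$) — according to the (C1)+(C2) scheme described in the overview. In other words, the statement to be proved is essentially the conjunction of Theorems \ref{chris_1} and \ref{chris_2}, and the content of the proof is the gluing argument that upgrades a contradiction hypothesis about blow-up into a contradiction with small-data regularity.

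First I would record the local theory: the system \eqref{chris_spher} is semilinear with smooth nonlinearity, so smooth spherically symmetric Cauchy data on $\{t=0\}$ launch a unique smooth solution on a maximal spacetime slab, and by finite speed of propagation the solution on any truncated backward null cone is controlled by the energy flux through that cone. The continuation principle then reduces everything to ruling out energy concentration at a point: if the smooth solution failed to extend past some finite time $T$, one argues — after a time translation, and using that spherical symmetry forces the first singularity onto the axis $r=0$ — that the first singularity occurs at the vertex of a backward null cone and that the energy inside that cone must stay bounded below by a fixed positive amount as the vertex is approached; otherwise the local energy would be below threshold and the solution would extend.

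Next I would invoke Theorem \ref{chris_1}: under hypothesis (2) the energy $E(t)$ on the truncated spacelike slices inside the backward light cone of the putative singular point tends to $0$ as $t \to T$. Choosing $t$ sufficiently close to $T$, the energy contained in the relevant truncated cone is smaller than $\eps^2$, where $\eps = \eps(N)$ is the threshold furnished by Theorem \ref{chris_2} under hypothesis (1). Rescaling and translating so that this small-energy region becomes Cauchy data on a slice $\{t=0\}$ — the rescaled data still satisfy (1) and (2) since those conditions are scaling invariant — Theorem \ref{chris_2} produces a smooth solution on a full forward neighborhood of the vertex; by uniqueness and finite speed of propagation it agrees with the original solution and extends it past time $T$, contradicting the choice of $T$. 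Hence no blow-up occurs, the maximal solution is global, and it is smooth for all time regardless of the size of the data.

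I expect the main technical obstacle to be precisely this localization/continuation step: making rigorous that a first singularity must be a point of energy concentration reached along a backward null cone, and that the restriction of the original large-data solution to such a cone can legitimately be treated as small Cauchy data for Theorem \ref{chris_2}. This requires careful domain-of-dependence and finite-speed-of-propagation arguments for \eqref{chris_spher}, together with the observation that hypotheses (1) and (2) — being statements about the intrinsic and extrinsic geometry of $N$ — are untouched by the spacetime rescaling, so that the small-energy result genuinely applies after the blow-up.
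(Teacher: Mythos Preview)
Your proposal is correct and follows exactly the approach indicated in the paper: the paper does not give a detailed proof of this theorem but simply states that ``these results combine to give the following theorem,'' referring to Theorems~\ref{chris_1} and~\ref{chris_2} via the (C1)+(C2) scheme you describe. Your write-up supplies the standard continuation/gluing argument that the paper leaves implicit, which is precisely what is needed.
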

The study of global existence for general wave maps was initiated by Tao through a series of 
papers \cite{tao_wm3,tao_wm4,tao_wm5,tao_wm6,tao_wm7}. Large energy global
existence of critical wave maps to a hyperbolic 2-plane  has been resolved by Schlag and
Krieger \cite{krieg_schlag_ccwm} by building on the concentration compactness methods of Bahouri, G\'erard \cite{bahou_gerard}
and Kenig, Merle \cite{kennig_merle2008}. In addition, \textbf{(C1)}
has been resolved for general critical wave maps without symmetry by Sterbenz and Tataru \cite{sterb_tata_main, sterb_tata_long}
using bubbling techniques. 

\section{Overview of Results}\label{ovcurrent}
In this work, we prove \textbf{(C1)} in the context of critical self-gravitating wave maps, i.e., wave maps coupled to Einstein's equations of 
general relativity. We restrict to the equivariant case. However, the techniques are expected to be effective also for critical spherically
symmetric self-gravitating wave maps\footnote{some preliminary work seems to indicate in this direction}. In the following we give a brief
overview of the set-up of the problem and the sequence of steps that result in the proof of \textbf{(C1)}. \\
Let $(\Sigma, q_0 , \mathbf{K},U_0, U_1 )$ be a smooth, compactly supported initial data set satisfying the constraint equations,
where $q_0$ is the metric of $\Sigma$, $\mathbf{K}_{\mu \nu}$ the second fundamental form of $\Sigma$ and $U_0,U_1$
are defined as in \eqref{wmcauchy}.
The Cauchy problem of critical self-gravitating wave maps is

\begin{equation}\label{ewmcauchy}
\left. \begin{array}{rcl}
\mathbf{E}_{\mu \nu} \fdg = \mathbf{R}_{\mu \nu} -\halb R_g g_{\mu \nu} &=&\mbo{\alpha} \mathbf{T}_{\mu \nu} \,\,\, \text{on}\,\, M^{2+1}\\
\square_g U^i + \leftexp{(h)}{\Gamma}^i_{jk} g^{\mu \nu}\ptl_\mu U^j \ptl_\nu U^k & =& 0 \,\,\,\,\,\,\,\,\,\,\,\,\,\,\, \text{on}\,\, M^{2+1}\\
\left.U\right|_{\Sigma}&=& U_0  \\
\left.\mathbf{X}(U)\right|_{\Sigma} & = & U_1 \end{array} 
\right\}
\end{equation}
where $\mathbf{R}$ and $R_g$ the Ricci tensor and scalar of $(M,g)$ respectively and $\mathbf{E}$ is called the Einstein tensor.
Let $(\Sigma,q_0 ,\mathbf{K})$ and $(N^2,h)$ be invariant under the action of $U(1)$ symmetry group. In particular,
let $N$ be a surface of revolution with a smooth, odd generating function $f$ such that
\[ d\,s^2_h = d\,\rho^2+ f^2(\rho) d\,\phi^2 \]
in $(\rho,\phi)$ coordinates and $f(0)=0,f_\rho(0)=1$. Let $(U_0,U_1)$ be equivariant under $U(1)$ action.\\
The system of equations in \eqref{ewmcauchy} is a symmetric hyperbolic system with smooth equivariant initial data, so there exists a unique
\footnote{upto an isometry} equivariant maximal development $(M,g,U).$ \\
Therefore, without loss of generality we can assume that the manifold $(M^{2+1},g)$ is $U(1)$ symmetric with the line element
\[ d\,s^2_g = -e^{2\Omega(t,r)} d\, t^2 + e^{2\gamma(t,r)}d\,r^2 + r^2 d\,\theta^2\]
in $(t,r,\theta)$ coordinates and $\Omega(t,r)$ and $\gamma(t,r)$ are scalar functions. $\gamma(t,0)$ is assumed to be 0 for the
regularity at the axis and $\Omega(t,0)$ can be set to $0$ by a reparameterization. With the equivariance symmetry 
$U(t,r,\theta) = (u(t,r),\theta)$, \eqref{ewmcauchy} reduces to

\begin{equation}\label{ewmcauchy_equi}
\left. \begin{array}{rcl}
\mathbf{E}_{\mu \nu} &=&\mbo{\alpha} \mathbf{T}_{\mu \nu} \,\,\,\,\,\,\,\,\,\,\,\,\,\,\, \text{on}\,\, M^{2+1}\\
\square_g u &=& \frac{f_u(u)f(u)}{r^2} \,\,\,\,\,\,\,\, \text{on}\,\, M^{2+1}\\
\left.U\right|_{\Sigma}&=& U_0  \\
\left.\mathbf{X}(U)\right|_{\Sigma} & = & U_1 \end{array} 
\right\}
\end{equation}

It has been proven that during Cauchy evolution the blow up, if it were to happen, can happen only on the axis of 
of $M$\cite{laan_equinotes}. Therefore, it is sufficient to study the properties of evolution near the axis. Furthermore, one could adapt 
the methods developed by Christodoulou \cite{chris_selfgrav} and Dafermos\cite{dafermos_trap} to 2+1 dimensional U(1) space times to prove that 
there are no trapped surfaces or marginally trapped surfaces during the evolution of the space time with equivariant wave map as a source\cite{laan_equinotes}. 
However, it is well known that in 2+1 dimensions the formation of outer trapped surfaces or marginally outer trapped surfaces  can be ruled out due to the 
works of Ida\cite{ida} and Galloway, Schleich, Witt\cite{gallo}.

Without loss of generality, we assume that the initial data is specified at $t=-1$ surface and that the 
first (hypothetical) singularity is at the origin $O$ of our coordinate system. The energy density 
$ \mathbf{e} \fdg = \mathbf{T} (\mathbf{X_1},\mathbf{X_1})$, where $\mathbf{X_1} = e^{-\Omega} \ptl_t$ is 
the unit timelike normal vector
of the $t = \text{constant} $ surface $ \Sigma_t$ and energy $E^O(t) \fdg = \int_{\Sigma_t \cap J^-(O) } \mathbf{e}\,\, d\,\bar{\mu}_q$
where $q$ is the induced spatial metric of $\Sigma_t$ after the $2+1$ decomposition of $(M,g)$.\\
 We use the vector fields method to study the evolution of wave maps in the truncated backward null cone of the point $O$.
We construct the appropriate momentum vector fields $\mathbf{P_X}$ for apt choices of multipliers $\mathbf{X}$ as follows
\[ \mathbf{P}^\mu_{\mathbf{X}} = \mathbf{T}^\mu_{\,\,\,\nu} \, \mathbf{X}^{\nu}. \]
We then use the Stokes' theorem on a truncated backward null cone of $O$ to estimate the divergence of $\mathbf{P_X}$
as we approach $O$ in a limiting sense. In the following we show the sequence of steps  that prove the non-concentration
of energy of critical equivariant self-gravitating wave maps.
\begin{enumerate}
\item
Assuming that the target manifold satisfies the condition 
\begin{align} \label{sphereatinfinity}
 \int_0^u f(s) \,d\,s \to \infty \,\,\,\text{as}\,\,\, u \to \infty
 \end{align}
we prove that 
\[ ||u||_{L^{\infty}} \leq c \]
for every solution $u$ of the equivariant wave map system \eqref{ewmcauchy_equi}.
\item
\begin{figure*}[!hbt]
\psfrag{O}{$O$}
\psfrag{Ktaus}{$K(\tau,s)$}
\psfrag{teq0}{$t=0$}
\psfrag{teqs}{$t=s$}
\psfrag{teqtau}{$t=\tau$}
\psfrag{teq-1}{$t=-1$}
\psfrag{Fluxpx}{$\text{Flux}(\mathbf{P_X})(t,s)$}
\psfrag{E(s)}{$E^O(s)$}
\psfrag{E(tau)}{$E^O(\tau)$}
\centerline{\includegraphics[height=2.5in]{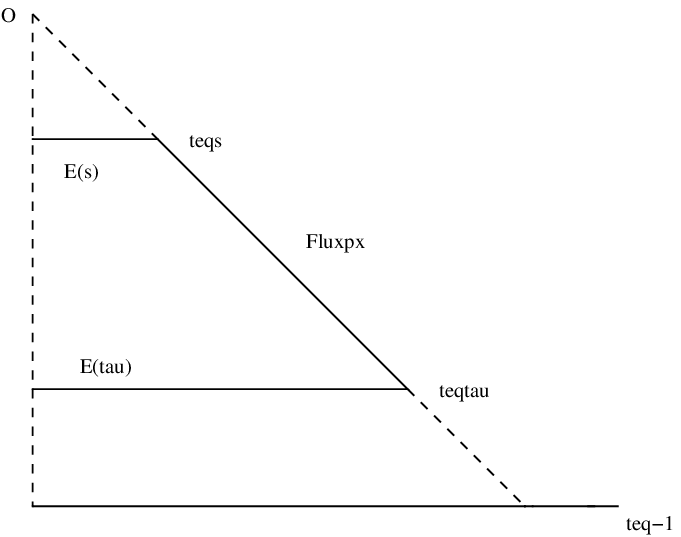}}
\caption{Application of the Stokes' theorem for the divergence of $\mathbf{P}_\mathbf{X_1}$ } 
\label{fig:fried-dyn}
\end{figure*}

Using the multiplier $\mathbf{X_1} \fdg = e^{-\Omega}\ptl_t$ we construct a divergence free momentum vector
$\mathbf{P}_{\mathbf{X_1}}$. Applying the Stokes'theorem in the region $K(\tau,s)$ and observing that the 
flux of $\mathbf{P}_{\mathbf{X_1}}$ through the truncated past null surface of $O$ is non-positive, we prove 
that
\[ E^O(s) \leq E^O(\tau) \,\, \text{for}\,\, -1\leq \tau \leq s < 0. \]
\item
The divergence free vector $\mathbf{P}_{\mathbf{X_1}}$ is used again to relate the fluxes through the surfaces
$\ptl \cal{S}_1$, $\ptl \cal{S}_2$ and $\ptl \cal{S}_3$ in the ``exterior'' of the interior of the past null cone of $O$ (as shown in the figure \ref{fig:annular_disc_1_intro} )
\begin{figure}[!hbt]
\psfrag{O}{$O$}
\psfrag{O}{$O$}
\psfrag{teqtau}{$t=\tau$}
\psfrag{teq-1}{$t=-1$}
\psfrag{S}{$\cal{S}$}
\psfrag{1}{$\ptl \cal{S}_1$}
\psfrag{2}{$\ptl \cal{S}_2$}
\psfrag{3}{$\ptl \cal{S}_3$}
\psfrag{Req0}{$R=0$}
\psfrag{ReqlT}{$R=\lambda T$}
\psfrag{ReqT}{$R=|T|$}

\centerline{\includegraphics[height=2.5in]{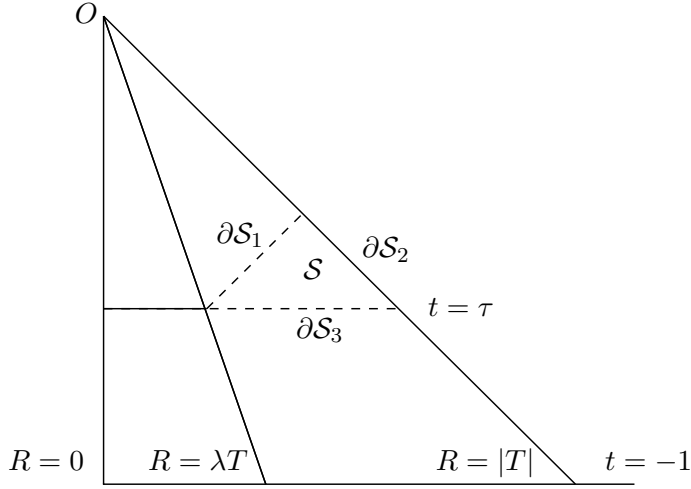}}
\caption{Non-concentration of energy away from the axis}
\label{fig:annular_disc_1_intro}
\end{figure} 

In addition, the multiplier $\mathbf{X_2} \fdg= e^{-\gamma}\ptl_r$ is used to construct an identity which is used
in a Gr\"onwall estimate to prove that energy doesn't concentrate away from the axis is i.e.,
\[ E^O_{\text{ext}} \fdg = \int_1 \mathbf{e} \, \bar{\mu}_q \to 0 \,\,\,\, \text{as}\,\,\, \tau \to 0\]
The introduction of the parameters $k_\ell$ and $k_n$ is crucial in neutralizing the ``bad'' terms in
energy identities, before setting up the Gr\"onwall estimate for the integrand of the flux of $\mathbf{P}_{\mathbf{X_1}}$
through the null surface $\cal{S}_1$. Overall, this is the most technical step and involves various estimates to prove 
the decay of fluxes through the null surfaces $\ptl \cal{S}_1$ and $\ptl \cal{S}_2$.
\item
A Morawetz multiplier $\mathbf{X_3} \fdg = r\ptl_r$ allows us to construct a momentum $\mathbf{P}_{\mathbf{X_3}}$ whose
divergence is the kinetic energy density $\mathbf{e}_{\text{kin}} \fdg = \halb e^{-2\Omega} u^2_t$. Extending the framework
of step 2, we use the Stokes' theorem on $K(\tau,s)$ and prove non-concentration of the bulk term by estimating the boundary
terms

\[\frac{1}{r(\tau)}\int_{K_\tau} e^{-2\Omega} u_t^2 \,\bar{\mu}_g \to 0 \] as $\tau \to 0$,
where $K_\tau$ is the backward null cone with the tip at the origin and base in the $ t= \tau$ slice, and $r(\tau)$
is the radial function along the mantel of the cone. In estimating the boundary term we also use the step 3. \\
It is predicted that a critical concentrating self-gravitating equivariant wave map goes to a harmonic map (static
solution) in $H^1_{\text{loc}}$. The statement of step 4 is expected to play a vital role in proving this statement.
\item
Finally the non-concentration of energy is proven by constructing a vector field $\mathbf{P}_{\text{tot}}$ as
\[ \mathbf{P}^\nu_{\text{tot}} \fdg= \mathbf{P}^\nu_{\mathbf{X_4}} + \mathbf{P}^\nu_{\kappa},  \]
where $\nu= 0, 1, 2$ in $(t,r,\theta)$ coordinates, $\mathbf{P}_{\mathbf{X_4}}$ is the corresponding momentum of 
the multiplier $\mathbf{X_4} \fdg= r^a \ptl_r $ for
$ a \in (\halb,1) $
and 
\[ \mathbf{P}^\nu_{\kappa} \fdg = \kappa u^\nu u - \ptl^\nu \kappa \frac{u^2}{2} \]
for $\kappa \fdg = \frac{1-a}{2}r^{a-1}$.
A similar technique of using the Stokes' theorem and estimating the boundary terms as above gives us the 
result that energy in the past null cone of any point does not concentrate provided the target manifold $(N,h)$ 
satisfies 
\begin{align} \label{grillakis_condition}
f(s)f_s(s)s + f^2(s) > 0 \,\,\,\text{for}\,\,\,  s >0 .
\end{align}
 This is the self-gravitating equivalent of the theorem of Grillakis \cite{grillakis} for equivariant wave maps
 on Minkowski background.
\end{enumerate}
The final statement can be formulated in terms of the following theorem.
\begin{thm}[Non-concentration of energy]
Let $(M,g,U)$ be a smooth, globally hyperbolic, equivariant maximal development of smooth, compactly supported equivariant 
initial data set $(\Sigma,q,\mathbf{K}, U_0, U_1)$ with finite initial energy $E_0$ and satisfying the constraint equations,
and let $(N,h)$ be a rotationally symmetric, complete, connected Riemannian manifold satisfying \eqref{sphereatinfinity}
and \eqref{grillakis_condition}
then the energy of the Einstein-wave map system cannot concentrate, i.e., $E^O (t) \to 0$, where $O$ is the
first (hypothetical) singularity of $M$.
\end{thm}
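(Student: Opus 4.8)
\emph{Proof strategy.} The plan is to localize the whole argument to the truncated past null cone $K(\tau,s)$ of the hypothetical singularity $O$, placed at the origin of the $(t,r,\theta)$ chart with Cauchy data at $t=-1$; since near $O$ the blow-up can only occur on the axis, this is the region of interest. For each multiplier $\mathbf{X}$ one forms $\mathbf{P}^\mu_{\mathbf{X}}\fdg=\mathbf{T}^\mu_{\ \nu}\mathbf{X}^\nu$; because $\grad_\mu\mathbf{T}^{\mu\nu}=0$ (which is the wave map equation of \eqref{ewmcauchy_equi}), Stokes' theorem on $K(\tau,s)$ converts the bulk integral of $\grad_\mu\mathbf{P}^\mu_{\mathbf{X}}=\halb\,\mathbf{T}^{\mu\nu}(\mathcal{L}_{\mathbf{X}}g)_{\mu\nu}$ into fluxes through the two spacelike caps and the null mantle. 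The structural fact that makes the self-gravitating problem manageable is that the geometric mass at infinity of the system is conserved along the evolution; this supplies the uniform a priori control on $\Omega$, $\gamma$ and their signs on a neighborhood of the cone that, in the Minkowski arguments, is automatic from flatness.

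First I would establish the $L^\infty$ bound of step 1: with $F(u)\fdg=\int_0^u f(s)\,ds$, integrating $\ptl_r F(u)=f(u)u_r$ radially from the axis and applying Cauchy--Schwarz against the $u_r^2$ and $r^{-2}f^2(u)$ parts of the energy density, the a priori control on $\gamma$ and regularity at the axis give $F(u(t,r))\le c\,E_0$ uniformly, hence $\|u\|_{L^\infty}\le c$ by \eqref{sphereatinfinity}; this bound is then used repeatedly to tame the wave-map nonlinearity and the metric functions. Next, for step 2, I would take $\mathbf{X_1}\fdg=e^{-\Omega}\ptl_t$ and verify, using the constraint and evolution equations for $(\Omega,\gamma)$, that $\mathbf{P}_{\mathbf{X_1}}$ is divergence free, and that (by the dominant energy condition for the wave-map stress tensor) its flux through the truncated past null surface of $O$ is non-positive; Stokes' theorem on $K(\tau,s)$ then yields $E^O(s)\le E^O(\tau)$ for $-1\le\tau\le s<0$, so that $\delta_0\fdg=\lim_{t\to0^-}E^O(t)$ exists and it remains to show $\delta_0=0$.

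For step 3 I would use $\mathbf{P}_{\mathbf{X_1}}$ again to balance the fluxes through the three pieces $\ptl\mathcal{S}_1,\ptl\mathcal{S}_2,\ptl\mathcal{S}_3$ of the annular region of Figure~\ref{fig:annular_disc_1_intro}, combine this with the flux identity generated by $\mathbf{X_2}\fdg=e^{-\gamma}\ptl_r$, and close a Gr\"onwall estimate for the integrand of $\mathrm{Flux}(\mathbf{P}_{\mathbf{X_1}})$ on $\mathcal{S}_1$ --- inserting the parameters $k_\ell,k_n$ precisely to annihilate the indefinite terms in that identity --- to conclude $E^O_{\mathrm{ext}}(\tau)\to0$ as $\tau\to0^-$. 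For step 4, the Morawetz multiplier $\mathbf{X_3}\fdg=r\ptl_r$ gives $\grad_\mu\mathbf{P}^\mu_{\mathbf{X_3}}=\mathbf{e}_{\mathrm{kin}}=\halb e^{-2\Omega}u_t^2$, and a further application of Stokes' theorem on $K(\tau,s)$, whose boundary terms are controlled by the monotonicity of step 2 and the exterior decay of step 3, yields $r(\tau)^{-1}\int_{K_\tau}e^{-2\Omega}u_t^2\,\bar{\mu}_g\to0$ as $\tau\to0$.

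Finally, for step 5 I would set $\mathbf{P}^\nu_{\mathrm{tot}}\fdg=\mathbf{P}^\nu_{\mathbf{X_4}}+\mathbf{P}^\nu_\kappa$ with $\mathbf{X_4}\fdg=r^a\ptl_r$, $a\in(\halb,1)$, $\mathbf{P}^\nu_\kappa\fdg=\kappa u^\nu u-\ptl^\nu\kappa\,\frac{u^2}{2}$ and $\kappa\fdg=\frac{1-a}{2}r^{a-1}$; the divergence $\grad_\nu\mathbf{P}^\nu_{\mathrm{tot}}$ splits into a kinetic part, absorbed by the non-concentration of kinetic energy from step 4, and a potential part whose sign is governed exactly by the Grillakis condition \eqref{grillakis_condition}. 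Running the Stokes'/boundary-estimate scheme of steps 2 and 4 once more, with the exterior contributions discarded via step 3, should force $\delta_0=0$, i.e. $E^O(t)\to0$ as $t\to0$, which is the assertion. The hard part, I expect, will be step 3: it is there that the conservation of mass at infinity has to be converted into genuinely local control on $\Omega$ and $\gamma$ along the light cone --- quantities that are simply not present in the flat case --- and it is the step demanding several multipliers at once, the balancing parameters $k_\ell,k_n$, and a Gr\"onwall loop with many error terms each of which must be shown to be subcritical. A secondary, more routine difficulty will be establishing the divergence-free property and the sign of the null flux of $\mathbf{P}_{\mathbf{X_1}}$ in step 2, and confirming that $O$ is approached through a region of uniformly controlled geometry so that the limiting arguments are legitimate.
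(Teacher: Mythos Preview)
Your proposal is correct and follows essentially the same five-step route as the paper: the $L^\infty$ bound via \eqref{sphereatinfinity}, monotonicity from the divergence-free $\mathbf{P}_{\mathbf{X_1}}$, the exterior non-concentration via the $\mathbf{X_1}/\mathbf{X_2}$ identities with the $k_\ell,k_n$ parameters and a Gr\"onwall loop, the kinetic non-concentration from the Morawetz vector $\mathbf{X_3}=r\ptl_r$, and finally the $\mathbf{P}_{\mathrm{tot}}=\mathbf{P}_{\mathbf{X_4}}+\mathbf{P}_\kappa$ argument under the Grillakis condition. Two small points you glossed over that the paper makes explicit: in step~5 the $e^{-2\gamma}u_r^2$ piece is handled by a separate application of Stokes' theorem to $\mathbf{P}_{\mathbf{X_4}}$ alone (once the $u_t^2$ and $f^2(u)/r^2$ spacetime integrals are already known to vanish), and the passage from the vanishing of the weighted spacetime integral $r_2(\tau)^{-a}\int_{K(\tau)}\mathbf{e}\,r^{a-1}\bar{\mu}_g$ to $E^O(\tau)\to0$ goes through a short contradiction argument combined with the monotonicity of step~2.
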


\chapter{Critical Self-Gravitating Wave Maps}
\section{Variational Formulation}
As introduced in the \eqref{wmlag}, the wave map action is as follows
\begin{align} \label{wmlag_new}
 S_{\text{WM}} (U) = \halb \int_M \langle U^\sigma, U_\sigma \rangle_{h(U)} \,\bar{\mu}_g . 
\end{align}
In the following we shall derive the Euler-Lagrangian equations corresponding to the first 
variation of $S_{\text{WM}}$.
Let $ U_\lambda \fdg M \to N$ be a one parameter family of maps \footnote{for every compactly supported $\mathcal{U}$, such a family of maps
is achieved for instance by taking $U_\lambda \equiv \exp_U {(\lambda\, \mathcal{U})}$ } such that 
\begin{align*}
 U_0 \equiv& \,U \\
U_\lambda \equiv&\, U \,\,\text{outside a compact set }
\end{align*}
and
\begin{align*}
 \mathcal{U} \fdg = \left. \frac{d}{d\lambda} U_\lambda \right|_{\lambda =0} 
\end{align*}
where $\mathcal{U} \in U^*TN$ and $\mathcal{U} \equiv 0$ outside the compact set,
then $U$ is a critical point iff the first variation of $S_{\text{WM}}$
\[ \left. \frac{d}{d\,\lambda} S_{\text{WM}}(U_\lambda)\right|_{\lambda =0} =0.  \]
Explictly,
after using the Leibnitz rule on the integrand, we get
\begin{align}
 \left . \frac{d}{d\,\lambda} S_{\text{WM}}(U_\lambda)\right|_{\lambda =0} 
=& \,\halb \int_M \left( g^{\mu \nu} \frac{\ptl h_{ij}} {\ptl U^k} 
\mathcal{U}^k \ptl_\mu U^i \ptl_\nu U^j  + 
2g^{\mu \nu} h_{ij} (U) \ptl_\mu \mathcal{U}^i \ptl_\nu U^j \right)\, \bar{\mu}_g \label{fvar}
\end{align}
Let $\cal{D}$ be a closed, oriented subset of $M$ such that $\text{supp}(\cal{U}) \subset \cal{D}$.
Now consider the quantity $\grad_\mu (h_{ij}(U)\,\mathcal{U}^i\, \ptl^\mu U^j)$, we have
\begin{align*}
 \grad_\mu (h_{ij}(U)\, \mathcal{U}^i\, \ptl^\mu U^j) 
=&\, h_{ij} \mathcal{U}^i\, \square_g  U^j + h_{ij}(U)\, \ptl^\mu U^j\, \grad_\mu \mathcal{U}^i +
\ptl^\mu U^j \mathcal{U}^i \frac{\ptl h_{ij}}{\ptl U^k}\,\ptl_\mu U^k
\end{align*}
However, since $\mathcal{U} =0$ on $\ptl \mathcal{D}$, the Stokes' theorem gives
\[ \int_D \grad_\mu (h_{ij}(U) \mathcal{U}^i \ptl^\mu U^j)\,  \bar{\mu}_g = 0, \]
and therefore we have,
\begin{align*}
 \int_\mathcal{D} g^{\mu \nu} h_{ij} (U) \ptl_\mu \mathcal{U}^i \ptl_\nu U^j \, \bar{\mu}_g = 
- \int_\mathcal{D} h_{ij} \mathcal{U}^i \square_g  U^j + \ptl^\mu U^j \mathcal{U}^i \frac{\ptl h_{ij}}{\ptl U^k} \ptl_\mu U^k \,\bar{\mu}_g.
\end{align*}
Now if we go back to \eqref{fvar},
\begin{align*}
 \left . \frac{d}{d\,\lambda} S_{\text{WM}}(U_\lambda)\right|_{\lambda =0} = & - \int_\mathcal{D} h_{ij} \mathcal{U}^i \square_g  U^j + \ptl^\mu U^j \mathcal{U}^i \frac{\ptl h_{ij}}{\ptl U^k}\ptl_\mu U^k -
\halb g^{\mu \nu} \frac{\ptl h_{ij}} {\ptl U^k} \mathcal{U}^k \ptl_\mu U^i \ptl_\nu U^j \, \bar{\mu}_g  \\
=& -\int_\mathcal{D} h_{ij} \mathcal{U}^i \square_g  U^j + \halb g^{\mu \nu}\mathcal{U}^i  \left( \frac{\ptl h_{ij}}{\ptl U^k} + \frac{\ptl h_{ik}}{\ptl U^j} \right)  \ptl_\mu U^k\ptl_\nu U^j \\
& \quad -\halb g^{\mu \nu} \frac{\ptl h_{ij}} {\ptl U^k} \mathcal{U}^k \ptl_\mu U^i \ptl_\nu U^j \,\bar{\mu}_g \\
=& -\int_\mathcal{D} h_{ij} \mathcal{U}^i \square_g  U^j+ \halb g^{\mu \nu}\mathcal{U}^i  \left( \frac{\ptl h_{ij}}{\ptl U^k} + \frac{\ptl h_{ik}}{\ptl U^j} 
-\frac{\ptl h_{jk}}{\ptl U^i} \right)  \ptl_\mu U^j\ptl_\nu U^k \,\bar{\mu}_g \\
=&-\int_\mathcal{D} h_{ij} \mathcal{U}^i \square_g  U^j+ \halb g^{\mu \nu}\,\mathcal{U}^i  \delta^s_i \left( \frac{\ptl h_{sj}}{\ptl U^k} 
+ \frac{\ptl h_{sk}}{\ptl U^j} 
-\frac{\ptl h_{jk}}{\ptl U^s} \right)  \ptl_\mu U^j\ptl_\nu U^k \,\bar{\mu}_g \\
=&-\int_\mathcal{D} h_{il} \mathcal{U}^i \square_g  U^l+ \halb g^{\mu \nu}\,\mathcal{U}^i  h^{ls}h_{il} \left( \frac{\ptl h_{sj}}{\ptl U^k} 
+ \frac{\ptl h_{sk}}{\ptl U^j} 
-\frac{\ptl h_{jk}}{\ptl U^s} \right)  \ptl_\mu U^j\ptl_\nu U^k \,\bar{\mu}_g \\
=&-\int_\mathcal{D} h_{il} \mathcal{U}^i \left( \square_g  U^l+ \halb g^{\mu \nu}\,\mathcal{U}^i  h^{ls} \left( \frac{\ptl h_{sj}}{\ptl U^k} 
+ \frac{\ptl h_{sk}}{\ptl U^j} 
-\frac{\ptl h_{jk}}{\ptl U^s} \right)  \ptl_\mu U^j\ptl_\nu U^k \right) \, \bar{\mu}_g \\
=&-\int_\mathcal{D} h_{il} \mathcal{U}^i \left( \square_g  U^l+ g^{\mu \nu}\, \leftexp{(h)}{\Gamma}^l_{jk}(U) \ptl_\mu U^j\ptl_\nu U^k \right) \,\bar{\mu}_g \\
\end{align*}

Therefore, after relabeling the indices,
the Euler-Lagrange equations in local coordinates take the following form \footnote{throughout
the course of this work we shall use this intrinsic form of Euler-Lagrangian equations }
\begin{align} \label{wmel}
 \square_g U^i + \leftexp{(h)}{\Gamma}^i_{jk}(U) g^{\alpha \beta}\ptl_\alpha U^j \ptl_\beta U^k &= 0,
\end{align}
where $\leftexp{(h)}{\Gamma}$'s are the Christoffel symbols of the target $N$
\[ \leftexp{(h)}{\Gamma}^i_{jk} \fdg = \halb h^{il}\left( \ptl_k h_{mj} + \ptl_i h_{mk} - \ptl_m h_{ij}\right).\]

The action \eqref{wmlag_new} can be generalized to include Einstein-Hilbert action of general relativity as follows
\begin{align}\label{Einstein_wavemap_action}
 S_{\text{EWM}} [U,g] \fdg = \halb \int_M \frac{1}{\mbo{\alpha}}R_g - \langle \ptl^\sigma U, \ptl_\sigma U \rangle _h \, \bar{\mu}_g 
\end{align}
where $R_g$ is the scalar curvature of $(M,g).	$
In local coordinates the Euler-Lagrange equations of the functional $S_{\text{EWM}} [U,g]$ are the following system of equations
\begin{subequations}\label{ewm_el}
\begin{align}
\mathbf{E}_{\mu \nu} \fdg = \mathbf{R}_{\mu \nu} -\halb R_g g_{\mu \nu} &=\mbo{\alpha} \mathbf{T}_{\mu \nu} \\
 \square_g U^i + \leftexp{(h)}{\Gamma^i_{jk}} g^{\mu \nu}\ptl_\mu U^j \ptl_\nu U^k &= 0 
\end{align}
\end{subequations}
where $\mathbf{R}$ is the Ricci tensor of $(M,g)$, $\mathbf{E}$ is the Einstein tensor and $\mbo{\alpha}$ is the 
gravitational coupling constant.

\section{Reduction of 3+1 Einstein's Equations}
Critical wave maps coupled to Einstein's equations of general relativity can be interpreted as reduced 3+1 vacuum Einstein's equations
on principal bundles with 1-parameter spacelike isometry groups\cite{kaluz1,kaluz2}. Following \cite{kaluz3, laanglob}, we shall show the derivation 
of this reduction for the case of self-gravitating wave maps. In the following sections we illustrate how the equations can be 
reduced further with various symmetry assumptions.

Let $G_1$ be a one-dimensional Lie group and $\olin{\Sigma}$ be a principal fiber bundle with base, the 
Riemannian 2-manifold $\Sigma$
and group $ G_1$. Now consider a 3+1 dimensional Lorentzian manifold $(\bar{M},\bar{g})$ such that 
$\bar{M} \fdg= \olin{\Sigma} \times \mathbb{R}$ such that the submanifolds $\olin{\Sigma}_t \fdg= 
\olin{\Sigma} \times \{t\} $ are space-like and $ {x} \times \mathbb{R}$ time-like. Note that, by definition,
$G_1$ acts transitively and freely on $\bar{M}$. Furthermore, we suppose that the metric $\bar{g}$ is invariant 
under the right action of the group $G_1$ on $\bar{M}$. We introduce coordinates adapted to this symmetry. Let
$(x^\mu)$ be the local coordinates on $M$, $\mu = 0,1,2$ and let $x^3$ be a local coordinate in $G_1$ corresponding
to a local trivialization of $M$ over $D_{\Sigma}$.
With the assumptions above, the metric $\bar{g}$ can be expressed in terms of $\tild{g}$, the $\pi$-induced metric on $\Sigma \times \mathbb{R}$ as follows
\[ \bar{g} = \pi^{*} \tild{g} + \pi^{*} (e^{2\psi}) (\ulin{\theta})^2\]
where $\psi$ is a function on $\Sigma \times \mathbb{R}$ and $\pi$ is the 
bundle projection $M \to \Sigma \times \mathbb{R}$ and 
\[ \ulin{\theta} \fdg= d\,x^3 + \mathbf{A}_\nu\,d\,x^\nu\]
Note that in the coordinate system chosen above, $\pi^*\tild{g} =\tild{g}. $
The Ricci tensor $\bar{\mathbf{R}}$ of $(\bar{M},\bar{g})$ can be expressed in terms of the Ricci tensor $\tild{\mathbf{R}}$
of $(M,\tild{g})$ as follows
\begin{subequations}\label{kkreduced_ricci}
\begin{align}
\bar{\mathbf{R}}_{\mu \nu} \equiv& \tild{\mathbf{R}}_{\mu \nu} - \ptl_\mu \psi \ptl_\nu \psi - \grad_\mu \ptl_\nu \psi -\halb e^{2\psi} \mathbf{F}_{\mu \sigma} \mathbf{F}_{\nu}^{\sigma} \\
 \bar{\mathbf{R}}_{\mu 3} \equiv& \halb e^{-\psi} \grad_\sigma (e^{3\psi} \mathbf{F}^\sigma_{\mu}) \\
 \bar{\mathbf{R}}_{33} \equiv& -e^{2\psi} (\tild{g}^{\mu \nu}\grad_\mu \ptl_\nu \psi + \tild{g}^{\mu \nu} \ptl_{\mu}\psi \ptl_{\nu} \psi -\frac{1}{4} e^{2\psi}\mathbf{F}_{\mu \nu} \mathbf{F}^{\mu\nu}  )
\end{align}
\end{subequations}
where $\mathbf{F}_{\mu \nu}$ is a 2-form on $M$ such that
\[ \mathbf{F}_{\mu \nu} \fdg = \grad_{\mu} \mathbf{A}_\nu- \grad_{\nu} \mathbf{A}_\mu \]
in the chosen coordinate frame $d\,x^\nu$.
We introduce the dual of the 2-form $e^{3\gamma}\mathbf{F}$
\[ \mathbf{G} \fdg = e^{3\psi} \, \leftexp{*}{\mathbf{F}} \]
However the 1-form $\mathbf{G}$ is closed due to the equations $ \mathbf{R}_{\mu 3}=0$ i.e.,
\[ d\, \mathbf{G} =0. \]
We assume that $M$ is contractible, therefore by Poincar\'e lemma,
there exists a potential $\omega$ such that 
\[ \mathbf{G} = d\,\omega \]
The scalar function $\omega$ on $M$ is called the twist potential.
The equation $d\, \mathbf{F} =0$ translates to
\begin{align} \label{twist_pot}
 \tild{\grad}_{\mu}(e^{-3\psi} \tild{g}^{\mu \nu} \ptl_\nu \omega) =0 
\end{align}
for the twist potential $\omega$. To reduce the equations in \eqref{kkreduced_ricci}
to Einstein-wave map system, we introduce the conformal metric $g$ such that
\[ g \fdg= e^{2\psi} \tild{g} \]
Then the equation $e^{-4\psi}\bar{\mathbf{R}}_{33} \equiv 0$ can be rewritten as 
\begin{align}\label{kk_wm1}
 \grad^\mu \ptl_\mu \psi + \halb e^{-4\psi} g^{\mu \nu} \ptl_\mu \omega \ptl_\nu \omega =& 0 
\end{align}
where $\grad$ is the covariant derivative with respect to the metric $g$.
On the other hand, the equation \eqref{twist_pot} translates to
\begin{align}\label{kk_wm2}
 \grad^\mu \ptl_\mu \omega - 4\, g^{\mu \nu} \ptl_\mu \psi \ptl_\nu \omega =&0.
\end{align}
Using the formulas to relate the Ricci tensors in two conformal metrics we get the following
for $(\bar{M},\bar{g})$ satisfying Einstein's equations
\begin{align}\label{kk_tracerev}
0 = \bar{\mathbf{R}}_{\mu \nu} + \tild{g}_{\mu \nu} \bar{\mathbf{R}}_{33} = \mathbf{R}_{\mu\nu}
- \halb \left( e^{-4\psi} \ptl_{\mu}\omega \ptl_{\nu} \omega + 4 \,\ptl_{\mu}\psi \ptl_{\nu}\psi\right)
\end{align}
Now consider a wave map 
\begin{align}
U \fdg\, (M,g) &\to (N,h) \notag \\
 p &\to (\psi,\omega)
\end{align}
where $N$ is the hyperbolic 2-plane with the line element
\begin{align} \label{hyp2plane}
 ds_h^2 = 2\,d\rho^2 + \halb e^{-4\rho}\, d\vartheta^2 
\end{align}
then we have
\[\leftexp{(h)}{\Gamma}^1_{11} =0, \leftexp{(h)}{\Gamma}^1_{12} =0,\leftexp{(h)}{\Gamma}^1_{22} = \halb e^{-4\rho} \]
and 
\[ \leftexp{(h)}{\Gamma}^2_{22} =0, \leftexp{(h)}{\Gamma}^2_{12} =-2, \leftexp{(h)}{\Gamma}^2_{11} =0. \]
The equations \eqref{kk_wm1} and \eqref{kk_wm2} resemble the wave map equations \eqref{wmel} with $(N,h)$ as the
target, and the equations \eqref{kk_tracerev} 
\begin{align}
 \mathbf{R}_{\mu \nu} =& \halb \left( e^{-4\psi} \ptl_{\mu}\omega\, \ptl_{\nu} \omega + 4 \,\ptl_{\mu}\psi\, \ptl_{\nu}\psi\right) \notag \\
=& \ip{U_\mu}{U_\nu}_{h(U)} \label{trace_rev}
\end{align}
are the trace reversed Einstein's equations on $(M,g)$. Therefore, after reversing the trace of \eqref{trace_rev} we get,
\begin{align} 
 \mathbf{R}_{\mu \nu} - \halb g_{\mu\nu} R_g =& \halb \left( e^{-4\psi} \ptl_{\mu}\omega \ptl_{\nu} \omega + 4 \,\ptl_{\mu}\psi \ptl_{\nu}\psi\right)
- \halb g_{\mu \nu} g^{\sigma \upsilon} \left( 2 \psi_\sigma \psi_\upsilon + \halb e^{-4\psi} \omega_\sigma \, \omega_\upsilon ) \right) \notag \\
=& \ip{U_\mu}{U_\nu}_{h(U)} -\halb g_{\mu \nu} \ip{U^\sigma}{U_\sigma}_{h(U)} \notag \\
=& \,\mathbf{T}_{\mu \nu} \label{ewm_aftertrace}.
\end{align}
Finally, collecting the equations \eqref{kk_wm1}, \eqref{kk_wm2} and \eqref{ewm_aftertrace},
we get the Einstein-wave map system in the form shown in \eqref{ewm_el}.
\begin{subequations}
\begin{align}
\mathbf{E}_{\mu \nu} \fdg=\mathbf{R}_{\mu \nu} - \halb g_{\mu\nu} R_g =&\, \mathbf{T}_{\mu \nu} \\
\grad^\mu \ptl_\mu \psi + \halb e^{-4\psi} g^{\mu \nu} \ptl_\mu \omega \ptl_\nu \omega =& \,0 \\
\grad^\mu \ptl_\mu \omega - 4 \, g^{\mu \nu} \ptl_\mu \psi \ptl_\nu \omega =&\,0.
\end{align}
\end{subequations}


\subsection*{The Polarized Case}
If we restrict to the spacetimes where the Killing vector field generating the isometry is orthogonal
\footnote{this condition is satisfied only if $\bar{M}$ is a trivial bundle} to the hypersurface 
$\Sigma \times \mathbb{R}$ then we have $\mathbf{A}_{\mu} \equiv 0$, as a consequence we have $\omega =$ constant. 
The matter field now is just a linear wave equation for $\psi$. 
So the system of equations reduce to the following form
\begin{subequations} \label{kk_polarized}
\begin{align}
 \mathbf{R}_{\mu \nu} =& \grad_\mu\,\psi \grad_\nu \,\psi  \\
 \square_{\,g}\, \psi =& 0.
\end{align}
\end{subequations}

\section{Equivariant Self-Gravitating Wave Maps}
Let us define equivariant wave maps.
Let $M$ be $(2+1)$ dimensional with  $SO(2)$ symmetry with the line element of the form

\[ d\,s_g^2 = -e^{2\Omega} dt^2 + e^{2\gamma}dr^2 + r^2 d\theta^2 \]
in the polar coordinates $t, r, \theta$ and, $\Omega = \Omega(t,r)$ and
$\gamma = \gamma(t,r) $ are functions of $t$ and $r.$ In the null coordinates
$\xi$, $\eta$ ,$\theta$ 
\[ d\,s_g^2= -e^{2z} d\,\xi \, d\, \eta + r^2 d\theta^2 ,\]
where $z = z(\xi,\eta)$ and $r = r(\xi,\eta)$ are functions of $\xi$ and $\eta.$
Further suppose that $N$ is a surface of revolution with the metric
\[ ds_h^2 = d\rho^2 + f^2(\rho)\, d\phi^2 \]
where $f$ is a smooth function with $f(0) =0$ and $f_{\rho}(0) = 1$ ( $f_\rho$ 
is the derivative of $f$ with respect to $\rho$).
Then the equivariant wave maps $U \fdg M \to N$ are the ones which have the following form
\begin{align*}
 U(t,r,\theta) & = (U^1,U^2) \\
&= (u(t,r),k \theta)
\end{align*}
in the $(\rho,\theta)$ coordinates, for some scalar function $u(t,r)$ and integer $k$(the homotopy degree). 
In other words $U$ maps the orbits
of $M$ under $U(1)$ action to the orbits of $N$ under the $U(1)$ symmetry action.
We have $\leftexp{(h)}{\Gamma}^{1}_{11}, \leftexp{(h)}{\Gamma}^{1}_{12}  =0$ and
$ \leftexp{(h)}{\Gamma}^{1}_{22}= -f(\rho)f_\rho(\rho)$. So the system \footnote{due to the decoupling nature
of the equivariant ansatz the other equation for $U^2$ is a triviality}
\[ \square_g U^1 + \leftexp{(h)}{\Gamma}^{1}_{jk}(U) g^{\mu \nu} \ptl_\mu U^j \ptl_\nu U^k =0 \]
reduces to 
\begin{align*}
 \square_g U^1 + \leftexp{(h)}{\Gamma}^{1}_{22}(U) g^{\theta \theta} (\ptl_\theta (k\theta))^2 =0.
\end{align*}
Therefore, the wave maps system reduces to the following equation for the function $u(t,r)$
\[ \square_g u = k^2\frac{f(u)f_u(u)}{r^2}\]
where
\[ \square_g u = -e^{-2\Omega}(u_{tt} + (\gamma_t-\Omega_t)u_t) + e^{-2 \gamma}(u_{rr} + \frac{u_r}{r} + (\Omega_r - \gamma_r)u_r).\]
The self-gravitating Einstein equivariant wave map system is 

\begin{align*}
 \mathbf{E}_{\mu \nu} =& \,\mbo{\alpha}\, \mathbf{T}_{\mu \nu} \\
\square_g u  =& k^2 \frac{f(u)f_u(u)}{r^2} \\
\end{align*}
where $ \mbo{\alpha} $ is the gravitational coupling constant.
\subsection*{Polarized Case With Two Killing Vector Fields}
We can also consider a special case of $k=0$, the system reduces to the polarized case of
3+1 vacuum Einstein equation with two Killing space-like vector fields
\begin{align*}
 \mathbf{R}_{\mu \nu} =& \,\ptl_\mu u \ptl_\nu u  \\
 \square_{\,g}\, u =& \,0.
\end{align*}

\section{Spherically Symmetric Self-Gravitating Wave Maps}
In the following we shall explore another variant of symmetry for self-gravitating wave maps.
Let $(M,g)$ be invariant under the action of $U(1)$ isometry group and let us choose polar 
coordinates $(t,r,\theta)$ as above, then the metric is
\[ d s_g^2 = -e^{2\Omega} dt^2 + e^{2\gamma}dr^2 + r^2 d\theta^2. \]
We define a spherically symmetric wave map to be the map $U (M,g) \to (N,h)$ which depends only on $t$ and $r$,
i.e., $U = U(t,r)$. Therefore, the wave map system of equations
\[ \square_g U^i + \leftexp{(h)}{\Gamma}^i_{jk}(U) g^{\mu \nu}\ptl_\mu U^j \ptl_\nu U^k = 0 \]
reduces to
\[\square_g U^i + \leftexp{(h)}{\Gamma}^i_{jk} (U) ( -e^{-2\Omega} U_t^jU_t^k + e^{-2\gamma}U^j_r U^k_r) =0 \]
where
 \[ \square_g U^i = -e^{-2\Omega}(U^i_{tt} + (\gamma_t-\Omega_t)U^i_t) + e^{-2 \gamma}(U^i_{rr} + \frac{U^i_r}{r} + (\Omega_r - \gamma_r)U^i_r). \]
Note that, unlike in the equivariant case we don't need to assume symmetry on the target manifold $N$. However, if we assume
that target manifold to be a hyperbolic 2-plane, the critical spherically symmetric self-gravitating wave map system 
can be interpreted as reduction of 3+1 vacuum Einstein equations with two Killing vector fields. Therefore, if 
we assume the target manifold to be $(N,h)$ with $h$ as in \eqref{hyp2plane}
we get the following Einstein-spherically symmetric wave map system
\begin{align*}
 \mathbf{E}_{\mu \nu} =& \,\mbo{\alpha}\, \mathbf{T}_{\mu \nu} \\
\square_g U^1 \,+& \halb e^{-4U^1} \left( -e^{-2\Omega} U_t^2U_t^2 + e^{-2\gamma}U^2_r U^2_r \right) =0 \\
\square_g U^2 \,-&4\left(-e^{-2\Omega} U_t^1U_t^2 + e^{-2\gamma}U^1_r U^2_r\right) =0.
\end{align*}
If the two Killing vector fields of $M$ commute, the 3+1 Einstein equations can be reduced further to wave maps on Minkowski space
with spherical symmetry as shown in \cite{G2spacetimes}. In \cite{G2spacetimes}, the work of Christdoulou and Tahvildar-Zadeh 
\cite{chris_tah1} has been used to prove that the maximal Cauchy development of the corresponding Cauchy problem is geodesically complete.

 \chapter{The Problem of Critical Equivariant Self-Gravitating Wave Maps}
\section{The Cauchy Problem}

We formulate the Cauchy problem for the critical self-gravitating equivariant
wave map system. Let $(\Sigma, q_0, \mathbf{K}, U_0, U_1)$ be an initial data set
of the Einstein-wave map system satisfying the constraint equations,
where $q_0$ is the metric of $\Sigma$, $\mathbf{K}_{\mu \nu}$ is the second fundamental form of $\Sigma$,

\begin{align*}
U_0 \fdg \Sigma &\to N  \\
p & \to U_0(p),
\end{align*}
and $U_1$ is the derivative at the initial data surface $\Sigma$ of the wave map $U$ 
with respect to the unit timelike normal $\mathbf{X}$ \footnote{the normal $\mathbf{X}$ is also used to define the second
fundamental form $\mathbf{K}$}  of $\Sigma$ . So,
\begin{align*}
 U_1 \fdg \Sigma &\to T_{U_0} N \\
p &\to T_{U_0(p)} N.
\end{align*}
Now we are ready to state the Cauchy problem of the critical self-gravitating 
wave map system.
\begin{equation}\label{ewmcauchy_new}
\left. \begin{array}{rcl}
\mathbf{E}_{\mu \nu} &=&\mbo{\alpha} \mathbf{T}_{\mu \nu} \,\,\, \text{on}\,\, M^{2+1}\\
\square_g U^i + \leftexp{(h)}{\Gamma}^i_{jk} g^{\mu \nu}\ptl_\mu U^j \ptl_\nu U^k & =& 0 \,\,\,\,\,\,\,\,\,\,\,\,\,\,\, \text{on}\,\, M^{2+1}\\
\left.U\right|_{\Sigma}&=& U_0  \\
\left.\mathbf{X}(U)\right|_{\Sigma} & = & U_1 \end{array} 
\right\}
\end{equation}
We assume that all the data $ q_0, \mathbf{K}, U_0, U_1$ on the initial surface $\Sigma$ are smooth and compactly supported.
In Chapter 1 we mentioned large energy global existence of wave maps on the Minkowski background. In contrast to the picture
there, in the self-gravitating case we consider coupled Einstein-wave map system. Therefore, we need to construct the spacetime in the 
sense of \cite{Bruhat_Geroch_classic}. In the context of self-gravitating wave maps the equivalent geometric picture of global 
existence is geodesic completeness of the future development of the Einstein-wave map system. However, to prove geodesic completeness
of the maximal Cauchy development of \eqref{ewmcauchy_new}, a strategy similar to that of the Minkowski space seems promising
\footnote{especially for the cases of equivariant and spherical symmetry}. We formulate the two steps as follows.

\begin{description}
\item[(C'1) Non-concentration of energy ] The energy on a spacelike surface inside the past null cone of a point
goes to zero as one approaches the tip of the cone.

\item[(C'2) Geodesic completeness for small energy ] For arbitrarily small initial energy the maximal Cauchy development of \eqref{ewmcauchy_new}
is geodesically complete, hence inextendible.
\end{description}

\begin{rem}
 The Einstein wave map system of equations \eqref{ewmcauchy_new} are the Euler-Lagrange equations of a covariant action, 
 which can be interpreted as a symmetric hyperbolic system. Hence there exists a smooth globally hyperbolic maximal 
 development $ (M,g,U)$ of the initial data set.
\end{rem}
\subsection*{Equivariant Initial Data}
We define equivariance of initial data set as follows. Let $(\Sigma, q, \mathbf{K})$ and the 
target manifold $N$ be invariant under the action of the U(1) symmetry group. The initial 
data $(U_0, U_1)$ of the wave map $U \fdg M \to N$ is said to be equivariant if 
\begin{align*}
 U_0 \circ e^{i\theta} = e^{ik\theta} \circ U_0 \\
 U_1 \circ e^{i\theta} = e^{ik\theta}_{*} \circ U_1
\end{align*}
where $e^{ik\theta}_{*}$ is the pushforward of $e^{ik\theta} \fdg N \to N.$
\begin{pro}\label{maximal_equi}
 The maximal development $(M,g,U)$ of the equivariant initial data of the Einstein wave map system is 
 equivariant under the action of $U(1)$ symmetry group.
\end{pro}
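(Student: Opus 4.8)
The plan is to obtain equivariance of the maximal development from the \emph{uniqueness} clause of the well-posedness theory, exploiting that the $U(1)$ symmetry acts on the Einstein--wave map system as a twisted combination of a diffeomorphism of the base and an isometry of the target. By the preceding Remark, the system \eqref{ewmcauchy_new} can be put in symmetric hyperbolic form (for instance in a wave/harmonic gauge), so it possesses a maximal globally hyperbolic development $(M,g,U)$ with Cauchy embedding $i\fdg\Sigma\hookrightarrow M$, unique up to isometry in the geometric sense of Choquet-Bruhat--Geroch \cite{Bruhat_Geroch_classic}. Two covariances are the input: (i) if $(g,U)$ solves \eqref{ewmcauchy_new} then so does $(g,S\circ U)$ for any isometry $S$ of $(N,h)$, because $\mathbf{T}$ and the wave map equation see $U$ only through $h$-invariant quantities; and (ii) if $(g,U)$ solves then so does $(\Phi^{*}g,U\circ\Phi)$ for any diffeomorphism $\Phi$ of $M$.

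First I would fix $\alpha\in U(1)$, write $R_\alpha\fdg\Sigma\to\Sigma$ for the base symmetry (an isometry of $q_0$ preserving $\mathbf{K}$, by the standing hypothesis that the geometric data is $U(1)$-invariant) and $S_{k\alpha}\fdg N\to N$ for the rotation of the surface of revolution $N$ by angle $k\alpha$ (an isometry of $h$), so that equivariance of the data reads $U_0\circ R_\alpha=S_{k\alpha}\circ U_0$ and $U_1\circ R_\alpha=(S_{k\alpha})_{*}\circ U_1$. Using (i), $(M,g,S_{k\alpha}\circ U)$ with embedding $i$ is a maximal development of the data obtained by applying $S_{k\alpha}$ to the target component, which by equivariance coincides with the data obtained from $(\Sigma,q_0,\mathbf{K},U_0,U_1)$ by precomposing with $R_\alpha$. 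But, since $R_\alpha$ is a diffeomorphism of $\Sigma$ fixing $q_0$ and $\mathbf{K}$, that precomposed data set is also developed by $(M,g,U)$, now with the reparametrized embedding $i\circ R_\alpha$. By geometric uniqueness, these two developments are intertwined by a unique isometry $\Phi_\alpha\fdg M\to M$ satisfying $\Phi_\alpha^{*}g=g$, $\Phi_\alpha\circ i=i\circ R_\alpha$ and $U\circ\Phi_\alpha=S_{k\alpha}\circ U$; the last identity is precisely equivariance of $U$ relative to the isometry $\Phi_\alpha$ of $(M,g)$ and the isometry $S_{k\alpha}$ of $N$.

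It then remains to verify that $\alpha\mapsto\Phi_\alpha$ is a smooth action of $U(1)$ by isometries. Rigidity of isometries of a globally hyperbolic spacetime relative to a Cauchy surface does this: $\Phi_\alpha\circ\Phi_\beta$ and $\Phi_{\alpha+\beta}$ are both isometries of $(M,g)$ restricting to $R_{\alpha+\beta}$ on $\Sigma$ with matching time orientation, hence equal; similarly $\Phi_0=\mathrm{id}$, and smoothness in $\alpha$ follows from smooth dependence in the well-posedness theorem (equivalently, the infinitesimal generator is the Killing field of $(M,g)$ extending the Killing field of $(\Sigma,q_0)$ that generates $R_\alpha$). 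Having the $U(1)$-action in hand, one may then \emph{without loss of generality} adopt coordinates adapted to the orbits, recovering the line element $ds_g^2=-e^{2\Omega}dt^2+e^{2\gamma}dr^2+r^2d\theta^2$ and the ansatz $U(t,r,\theta)=(u(t,r),k\theta)$ used in the rest of the thesis.

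I expect the principal obstacle to be the careful invocation of geometric uniqueness for this coupled system: one must pin down a gauge in which local well-posedness and Choquet-Bruhat--Geroch uniqueness both hold, and check that the Einstein constraints propagate (they do, since $\mathbf{T}$ is divergence-free on wave-map solutions, so the standard propagation-of-constraints argument applies). The bookkeeping of how the target-isometry covariance (i) and the spacetime-diffeomorphism covariance (ii) combine — the symmetry being neither a pure diffeomorphism of $M$ nor a pure isometry of $N$ — is where errors are easy to make. A secondary point is confirming that $\{\Phi_\alpha\}$ is a genuine smooth $U(1)$-action rather than merely a one-parameter family of isometries.
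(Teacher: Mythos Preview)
Your approach is correct and matches the paper's: both arguments invoke geometric uniqueness of the maximal Cauchy development (Choquet-Bruhat--Geroch) to extend the $U(1)$ symmetry from the initial data to the spacetime, the paper citing Rendall \cite{rendall_book} for the extension-of-isometries step while you spell it out directly. Your treatment is in fact more explicit than the paper's about the twisted nature of the symmetry---the combination of the base diffeomorphism $R_\alpha$ with the target isometry $S_{k\alpha}$---which the paper encodes only implicitly via the map $\Phi\fdg\Sigma\to\Sigma\times N$.
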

\begin{proof}
The proof is based on the geometric uniqueness of the maximal development $(M,g,U).$ Following Rendall \cite{rendall_book}, let 
us define a Lie group $\cal{G}$ which acts on $\Sigma$ in such a way that for each $g_1 \in \cal{G}$ a transformation
$\Phi_{g_1} \fdg \Sigma \to \Sigma$ is a $U(1)$ symmetry of the initial data $(\Sigma, q, \mathbf{K})$. Furthermore,
let us define $\Phi_{U_0}$ and $\Phi_{g_1}$ as follows,
\begin{align}
 \Phi_{U_0} \fdg = \Phi_{g_1} \circ U_0 
\end{align}
and 
\begin{align}
 \Phi \fdg \Sigma &\to \Sigma \times N  \notag\\
p & \to (\Phi_{g_1}(p), \Phi_{U_0}(p))
\end{align}
so that the map $\Phi$ defines the equivariance of the initial data of the Einstein-equivariant wave map system. The
proof follows by the construction of unique extensions of $\Phi_{g_1}$ and $\Phi$ using the elements of the isometry group
of the maximal Cauchy development, identically as shown in p.176 in Rendall \cite{rendall_book}.

\end{proof}
\begin{pro}\label{laan_mots}
 The maximal development $(M,g,U)$ of the equivariant initial data  $(\Sigma,q,\mathbf{K},U_0,U_1)$ doesn't have trapped or marginally trapped surfaces.
\end{pro}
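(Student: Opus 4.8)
The strategy is to exploit the $U(1)$ symmetry to reduce the Einstein--wave map system to a double-null system for the area radius $r$, to single out a monotone mass function whose value at the regular axis is negative --- which is exactly the absence of trapped and marginally trapped orbits --- and then to pass from symmetric orbits to arbitrary surfaces by invoking the $2+1$ no-horizon theorems of Ida and of Galloway--Schleich--Witt. First I would record the energy conditions. Since $(N,h)$ is Riemannian, the wave-map energy--momentum tensor satisfies $\mathbf{T}(k,k)=\langle\grad_k U,\grad_k U\rangle_{h}\ge 0$ for every null $k$, and in fact the dominant energy condition holds; through $\mathbf{E}_{\mu\nu}=\mbo{\alpha}\mathbf{T}_{\mu\nu}$ this is the null convergence condition, and the cosmological constant vanishes. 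Next, by Proposition \ref{maximal_equi} the maximal development is $U(1)$-symmetric, so its metric can be written $d\,s_g^2=-e^{2z}\,d\,\xi\,d\,\eta+r^2\,d\,\theta^2$. The $\xi\xi$ and $\eta\eta$ Einstein equations are the Raychaudhuri relations
\begin{align*}
\ptl_\xi\!\left(e^{-2z}r_\xi\right)&=-\mbo{\alpha}\,e^{-2z}\,r\,u_\xi^2\le 0, & \ptl_\eta\!\left(e^{-2z}r_\eta\right)&=-\mbo{\alpha}\,e^{-2z}\,r\,u_\eta^2\le 0,
\end{align*}
while the $\theta\theta$ equation reads $4e^{-2z}\,r\,r_{\xi\eta}=\mbo{\alpha}\,f^2(u)\ge 0$; the absence of any positive intrinsic-curvature term here reflects that the orbit circles are flat, and is what distinguishes the $2+1$ picture from $3+1$ spherical symmetry.

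Now introduce the mass function $\mathfrak{m}\fdg=-g^{\mu\nu}\ptl_\mu r\,\ptl_\nu r=4e^{-2z}r_\xi r_\eta$, so that an orbit is regular, marginally trapped, or trapped according as $\mathfrak{m}<0$, $\mathfrak{m}=0$, or $\mathfrak{m}>0$. Differentiating $\mathfrak{m}$ and substituting the three equations above shows $\ptl_\xi\mathfrak{m}\ge 0$ and $\ptl_\eta\mathfrak{m}\le 0$ throughout the regular regime $\{r_\xi>0,\ r_\eta<0\}$, so $\mathfrak{m}$ is monotone along the characteristics there; at the regular axis $r=0$ the condition $\gamma|_{\text{axis}}=0$ gives $g^{\mu\nu}\ptl_\mu r\,\ptl_\nu r=1$, i.e.\ $\mathfrak{m}=-1$, and the asymptotically flat end carries a conserved geometric mass below the conical threshold, i.e.\ $\mathfrak{m}<0$ there. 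The regular region thus contains a neighbourhood of the axis and of the initial slice $\Sigma$ (where $r_\xi>0$, $r_\eta<0$), and it is open by definition; propagating the monotonicity of $\mathfrak{m}$ outward --- adapting the methods of Christodoulou and Dafermos to the $U(1)$ $2+1$ setting, as in \cite{laan_equinotes} --- shows $\mathfrak{m}<0$ persists up to its boundary, so the regular region is also closed and hence all of $M$. In particular no $U(1)$-symmetric orbit is trapped or marginally trapped. To drop the symmetry assumption I would invoke the general $2+1$ results: the null convergence condition together with $\Lambda=0$ place us in the hypotheses of Ida's theorem (no marginally outer trapped surface, no apparent horizon) and of the Galloway--Schleich--Witt topological censorship (no black-hole region), which with the positivity of the outgoing expansion near the asymptotically flat end excludes trapped and marginally trapped surfaces of every kind.

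The main obstacle is the borderline case, i.e.\ excluding \emph{marginally} trapped configurations, where the Raychaudhuri inequalities are not strict, so that a priori an orbit could be tangent to the forbidden regime --- this is precisely the ``closedness of the regular region'' step above. Here I would use a strong-maximum-principle argument along characteristics: if $r_\xi$ vanished at an interior point while $\ptl_\xi(e^{-2z}r_\xi)\le 0$, then $r_\xi\equiv 0$ along the whole ingoing null ray back to $\Sigma$, contradicting $r_\xi>0$ on the regular initial slice; the analogous statement propagates the degeneracy of a would-be marginally outer trapped surface and is the rigidity half of Ida's theorem. Combining the non-strict monotonicity of $\mathfrak{m}$ with this propagation of degeneracy closes the borderline case and establishes the proposition.
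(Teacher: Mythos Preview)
The paper does not actually prove this proposition; it simply records that the proof is due to Andersson \cite{laan_equinotes}, and in the introductory discussion indicates two routes: adapting the Christodoulou--Dafermos trapped-surface arguments to $2+1$ $U(1)$ spacetimes, and invoking the $2+1$ no-horizon results of Ida \cite{ida} and Galloway--Schleich--Witt \cite{gallo}. Your proposal carries out precisely this program in detail --- the double-null Raychaudhuri system, the mass function $\mathfrak{m}=4e^{-2z}r_\xi r_\eta$ with its axis value $-1$ and its characteristic monotonicity, and the appeal to Ida and Galloway--Schleich--Witt for non-symmetric surfaces --- so your approach is consistent with, and considerably more explicit than, what the paper records. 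One minor slip: the equation $4e^{-2z}r\,r_{\xi\eta}=\mbo{\alpha}f^2(u)$ is the $\xi\eta$ component of the Einstein equations (i.e.\ $\mathbf{E}_{\xi\eta}=\mbo{\alpha}\mathbf{T}_{\xi\eta}$), not the $\theta\theta$ component; the computation itself is correct.
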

\begin{pro}\label{laan_fsing}
 The first (hypothetical) singularity occurs on the axis of the maximal development $(M,g,U).$
\end{pro}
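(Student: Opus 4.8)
The plan is to argue by contradiction. Suppose $(M,g,U)$ is singular and that some point $p$ on the singular boundary that is reached \emph{first} — in the sense that $p$ is a past endpoint of an inextendible causal curve of finite length with $J^-(p)$ otherwise regular — satisfies $r(p) = r_* > 0$, i.e.\ it lies strictly off the axis $\{r=0\}$. I would produce uniform a priori bounds on the metric coefficients and on the equivariant wave map $u$, together with all of their derivatives, on a characteristic rectangle $\mathcal{R}$ in the past of $p$ that stays bounded away from $\{r=0\}$, and then invoke local well-posedness of the (symmetric hyperbolic) Einstein--equivariant wave map system to extend the solution smoothly across $p$ — contradicting that $p$ is singular.

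First I would pass to double-null coordinates $(\xi,\eta)$ in which $d\,s_g^2 = -e^{2z}\,d\xi\,d\eta + r^2\,d\theta^2$, and record the reduced system there: the two Raychaudhuri equations for $\ptl_\xi r$ and $\ptl_\eta r$ (each with a non-positive right-hand side quadratic in $\ptl u$), the wave equation for $r$, the evolution equation for the conformal factor $z$, and the wave map equation $\square_g u = k^2 f(u)f_u(u)/r^2$. The crucial structural input is Proposition~\ref{laan_mots}: since the development contains no trapped or marginally trapped surfaces, the null derivatives $\ptl_\xi r$ and $\ptl_\eta r$ keep definite signs throughout (with the orientation in which $r$ increases toward larger radii), so $r$ is monotone along each null direction. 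In the rectangle $\mathcal{R}$ bounded by the two past null segments emanating from $p$ and by an initial spacelike slice, this monotonicity yields $0 < r_* \le r \le R_0$ with $R_0$ determined by the initial data; in particular $r$ is bounded \emph{away from zero} on $\mathcal{R}$.

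Next I would integrate the Raychaudhuri equations. Their right-hand sides are quadratic in the first derivatives of $u$ and are therefore controlled by the wave map energy flux through the null boundaries of $\mathcal{R}$, which is bounded by the conserved initial energy $E_0$; combined with the bound $\|u\|_{L^\infty} \le c$ (from the hypothesis \eqref{sphereatinfinity}, or directly from finiteness of energy on the relevant region) and with $r \ge r_*$, the potential term $k^2 f(u) f_u(u)/r^2$ is also bounded on $\mathcal{R}$. Integrating then gives finite bounds on $\ptl_\xi r$, $\ptl_\eta r$ and on $z$, i.e.\ on $\Omega$ and $\gamma$: the geometry does not degenerate away from the axis. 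With the metric coefficients controlled, a standard energy estimate for $\square_g u = k^2 f(u) f_u(u)/r^2$ on $\mathcal{R}$, closed by a Gr\"onwall argument, bounds $\ptl u$ pointwise; differentiating the equations and iterating propagates $C^k$ bounds for every $k$, so one has uniform smooth bounds for $(g,U)$ on $\mathcal{R}\setminus\{p\}$.

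Finally, these bounds let one take smooth limits along $\ptl\mathcal{R}$ and solve the characteristic (or an auxiliary Cauchy) initial value problem for the Einstein--equivariant wave map system with data there, extending $(M,g,U)$ to a neighborhood of $p$ — contradicting the maximality of the development at $p$. Hence no singular point can occur off the axis, and the first (hypothetical) singularity lies on $\{r=0\}$. The hard part is precisely the a priori control of the conformal factor $z$ on $\mathcal{R}$, deduced from the absence of trapped surfaces together with finiteness of the energy; this is the $2+1$ equivariant analogue of the estimates of Christodoulou~\cite{chris_selfgrav} and Dafermos~\cite{dafermos_trap} carried out in~\cite{laan_equinotes}, while the downstream wave map energy estimates, higher regularity, and the extension step are comparatively routine.
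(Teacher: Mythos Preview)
The paper does not actually prove this proposition: immediately after stating Propositions~\ref{laan_mots} and~\ref{laan_fsing} it simply records that ``The proofs of Propositions~\ref{laan_mots} and~\ref{laan_fsing} are due to Andersson~\cite{laan_equinotes},'' so there is no in-text argument to compare your proposal against. The only methodological hint the paper gives is in Section~\ref{ovcurrent}, where it remarks that one can adapt the methods of Christodoulou~\cite{chris_selfgrav} and Dafermos~\cite{dafermos_trap} to $2+1$ dimensional $U(1)$ spacetimes to obtain these results.

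Your outline is entirely consistent with that hint: the contradiction framework, the passage to double-null coordinates, the use of Proposition~\ref{laan_mots} to keep $r$ bounded away from zero in a characteristic rectangle in the past of an off-axis candidate singularity, energy-flux control of the Raychaudhuri equations, and then a bootstrap to higher regularity followed by an extension argument is precisely the Christodoulou--Dafermos template adapted to this setting. You also correctly identify the genuinely nontrivial step as the a priori control of the conformal factor $z$ (equivalently of $\Omega$ and $\gamma$) on the rectangle; the rest---pointwise bounds on $u$, on the potential $f(u)f_u(u)/r^2$ once $r\ge r_*>0$, propagation of regularity, and the local well-posedness extension---is comparatively standard. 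Since the paper defers the actual proof to~\cite{laan_equinotes}, your sketch is as close to the ``paper's proof'' as one can get from the text itself.
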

The proofs of Propositions \ref{laan_mots} and \ref{laan_fsing} are due to Andersson\cite{laan_equinotes}.
In view of Proposition \ref{maximal_equi}, without loss of generality, let us assume that the metric $g$ in the polar coordinates $(t,r,\theta)$
is of the following form
\begin{align} \label{metric_polarform}
 d\,s^2_g = -e^{2\Omega(t,r)} d\, t^2 + e^{2\gamma(t,r)}d\,r^2 + r^2 d\,\theta^2 
\end{align}
for some scalar functions $\Omega(t,r)$ and $\gamma(t,r)$. Furthermore, in view of the critical dimension, without loss of generality
we can assume that the initial data surface is at $t=-1$ and $O$ is at $t=0$. The axis of $M$ is given by $r=0$. $\gamma(t,0)$ should
be $0$ to avoid a conical singularity at the axis and $\Omega(t,0)$ can be set to $0$ by a re-parameterization. Let us assume that
$N$ is a surface of revolution with a smooth, odd generating function $f$ such that 
\[ d\,s^2_h = d\,\rho^2+ f^2(\rho) d\,\phi^2 \] 
in $(\rho,\phi)$ coordinates and $f(0)=0,f_\rho(0)=1.$ The Cauchy problem of energy critical self-gravitating wave maps, with the equivariant
ansatz $U(t,r,\theta) = (u(t,r),k\theta)$ reduces\footnote{here we restrict to the case of $k=1$, the results in this work can be extended similarly to 
a general rotation number $k$ } to the following, \eqref{ewmcauchy_new} reduces to

\begin{equation}\label{ewmcauchy_equi_new}
\left. \begin{array}{rcl}
\mathbf{E}_{\mu \nu} &=&\mbo{\alpha} \mathbf{T}_{\mu \nu} \,\,\,\,\,\,\,\,\,\,\,\,\,\,\, \text{on}\,\, M^{2+1}\\
\square_g u &=& \frac{f_u(u)f(u)}{r^2} \,\,\,\,\,\,\,\, \text{on}\,\, M^{2+1}\\
\left.U\right|_{\Sigma}&=& U_0  \\
\left.\mathbf{X}(U)\right|_{\Sigma} & = & U_1 \end{array} 
\right\}
\end{equation}
with the equivariant initial data set $(\Sigma,q_0,\mathbf{K},U_0,U_1).$ 
 \section{Einstein Tensor}
In this section we shall explicitly calculate the components of the Einstein tensor 
\[ \mathbf{E}_{\mu \nu} \fdg = \mathbf{R}_{\mu \nu} -\halb R_g g_{\mu \nu} \]
in the polar coordinates $(t,r,\theta)$. \\
The Ricci tensor
\[\mathbf{R}_{\mu\nu}= \ptl_{\sigma}\leftexp{(g)}{\Gamma}^\sigma_{\mu \nu} - \partial_{\nu}
\leftexp{(g)}{\Gamma}^\sigma_{\sigma\mu} + \leftexp{(g)}{\Gamma}^\upsilon_{\upsilon \sigma} \leftexp{(g)}{\Gamma}^\sigma_{\nu\mu} 
- \leftexp{(g)}{\Gamma}^\upsilon_{\mu \sigma} \leftexp{(g)}{\Gamma}^\sigma_{\nu\upsilon} \]
is explicitly given by
\begin{align*}
\mathbf{R}_{tt} =& e^{2(\Omega- \gamma)}r^{-1} \left(\Omega_r(1-r\gamma_r  + r \Omega_r) + r \Omega_{rr}  \right) + \gamma_t(\Omega_t -\gamma_t) -\gamma_{tt},\\
\mathbf{R}_{rr} =& e^{2(\gamma-\Omega)} \left(\gamma^2_t -\gamma_t \Omega_t + \gamma_{tt} \right) -\Omega^2_r + \gamma_r(r^{-1}+ \Omega_r), \\
\mathbf{R}_{tr} =& r^{-1}\gamma_t, \\
\mathbf{R}_{\theta \theta} =& e^{-2\gamma} r (\gamma_r -\Omega_r), \\
\mathbf{R}_{t\theta} =&0, \\
\mathbf{R}_{r\theta} =&0, 
\end{align*}
and the scalar curvature
\begin{align*}
 R_g = 2e^{-2\gamma} \left(r^{-1}(\gamma_r -\Omega_r) + \gamma_r \Omega_r -\Omega^2_r -\Omega_{rr} \right) + 2e^{-2\Omega}\left( -\Omega^2_r + \gamma^2_r-\gamma_r
 \Omega_r + \gamma_{tt} \right).
\end{align*}
Therefore, the Einstein tensor is
\begin{align*}
 \mathbf{E}_{tt} &= e^{2(\Omega -\gamma)} \gamma_r r^{-1}, \\
\mathbf{E}_{tr} &= \gamma_t r^{-1}, \\
\mathbf{E}_{rr} &= \Omega_r r^{-1}, \\
\mathbf{E}_{\theta \theta} &= r^2 \bigl(e^{-2\gamma} (-\gamma_r\Omega_r + \Omega^2_r + \Omega_{rr}) -e^{-2\Omega} (\gamma^2_t -\gamma_t\Omega_t + \gamma_{tt}) \bigr), \\
\mathbf{E}_{t \theta} & = 0\,\,\text{and} \\
\mathbf{E}_{r \theta}&=0.
\end{align*}

\section{Energy Momentum Tensor}
The energy-momentum tensor $ \mathbf{T}$ for a wave map $U : (M,g) \to (N,h) $ is 
\begin{align}
 \mathbf{T}_{\mu \nu} \fdg = & \frac{\ptl\mathcal{L}_{\text{WM}}}{\ptl g^{\mu \nu}} - \halb g_{\mu \nu} \mathcal{L_\text{WM}} \notag \\
=& \langle \ptl_\mu U , \ptl_\nu U \rangle_{h(U)} -\halb g_{\mu \nu} \langle \ptl^\sigma U ,\ptl_\sigma U \rangle_{h(U)},
\end{align}
where $\mu$,$\nu$,$\sigma = 0,1,2$. 
In the following we will calculate each of the components of the energy momentum tensor in $(t,r,\theta)$ coordinates.
Note,
\begin{align}
 \langle \ptl^\sigma U ,\ptl_\sigma U \rangle_{h(U)} = -e^{-2\Omega}u_t^2 + e^{-2\gamma}u_r^2 + \frac{f^2(u)}{r^2}.
\end{align}
Now we proceed to calculate $ \mathbf{T}_{\mu \nu}$ 
\begin{align*}
 \mathbf{T}_{tt} &= h_{ij}\, \ptl_t U^i\, \ptl_t U^j - \halb g_{tt}\,\langle \ptl^\sigma U ,\ptl_\sigma U \rangle_h   \\
&= u_t^2 - \halb (-e^{2\Omega} )\left(-e^{-2\Omega}u_t^2 + e^{-2\gamma}u_r^2 + \frac{f^2(u)}{r^2} \right) \\
&= \halb e^{2\Omega}\left(e^{-2\Omega}u_t^2 +e^{-2\gamma}u_r^2 + \frac{f^2(u)}{r^2} \right) , \\
\mathbf{T}_{tr} &=  h_{ij}\, \ptl_t U^i\, \ptl_r U^j - 0 \\
&= u_t u_r, \\
\mathbf{T}_{rr} &= h_{ij}\, \ptl_r U^i \, \ptl_r U^j - \halb g_{rr}\, \langle \ptl^\sigma U ,\ptl_\sigma U \rangle_h  \\
&= u_r^2 -\halb (e^{2\gamma})\left(-e^{-2\Omega}u_t^2 + e^{-2\gamma}u_r^2 + \frac{f^2(u)}{r^2}\right) \\
&= \halb e^{2\gamma}\left(e^{-2\Omega}u_t^2 + e^{-2\gamma}u_r^2 - \frac{f^2(u)}{r^2}\right), \\
\mathbf{T}_{\theta \theta} &= h_{ij}\, \ptl_\theta U^i \, \ptl_\theta U^j - \halb g_{\theta \theta} \, \langle \ptl^\sigma U ,\ptl_\sigma U \rangle_h \\
&=f^2(u)- \halb r^2\left(-e^{-2\Omega} u_t^2 + e^{-2\gamma}u_r^2 + \frac{f^2(u)}{r^2}\right) \\
&=\halb r^2 \left(e^{-2\Omega} u_t^2 - e^{-2\gamma}u_r^2 + \frac{f^2(u)}{r^2}\right), \\
\mathbf{T}_{t \theta} &= 0\,\,\text{and}\\
\mathbf{T}_{r \theta}&=0.
\end{align*}
 
Let $\mathbf{X_1} \fdg = e^{-\Omega}\ptl_t$ be the future directed unit timelike normal and $\mathbf{X_2} \fdg = e^{-\gamma}\ptl_r $. 
We define the energy density $\mathbf{e} \fdg = \mathbf{T} (\mathbf{X_1},\mathbf{X_1}) $ and momentum density 
$\mathbf{m} \fdg = \mathbf{T}(\mathbf{X_1},\mathbf{X_2}).$ So
\begin{align*}
\mathbf{e}  & = \halb \left( e^{-2\Omega} \, u_t^2 + e^{-2\gamma} \, u_r^2 + \frac{f^2(u)}{r^2} \right) \\
&= \halb \left( (\mathbf{X_1}(u))^2 + (\mathbf{X_2}(u))^2 + \frac{f^2(u)}{r^2} \right) \\
\mathbf{m} & = e^{-(\Omega + \gamma)} u_t \, u_r  \\
 &= \mathbf{X_1}(u) \, \mathbf{X_2}(u) \\
\end{align*}
for the sake of brevity we further define $\mathbf{e_0} \fdg = \left( \mathbf{X_1}(u) \right)^2 + \left(\mathbf{X_2}(u)\right)^2 $ 
and $\mathbf{f} = \frac{f^2(u)}{r^2}.$
Let us also define the energy on a Cauchy surface $\Sigma_t$
\begin{align*}
E(U)(t)  \fdg =& \int_{\Sigma_t} \mathbf{e}\, \bar{\mu}_q  \\
=& 2\pi \int^{\infty}_{0} \mathbf{e}(t,r') r'e^{\gamma(t,r')} \, d\, r'\,, \\
\intertext{the energy in a coordinate ball $B_r$} 
 E(U)(t,r) \fdg =& \int_{B_r} \mathbf{e}\, \bar{\mu}_q\, , \\
 =& 2\pi \int^r_{0} \mathbf{e}(t,r') r'e^{\gamma(t,r')} \, d\, r' \\
\intertext{ the energy inside the causal past $J^-(O)$ of $O$} 
E^O(t) \fdg =& \int_{\Sigma_t \cap J^-(O)} \mathbf{e}\, \bar{\mu}_q \,.
\end{align*}

\section*{Einstein equivariant wave map system of equations}
The Einstein-equivariant wave map system of equations as in \eqref{ewmcauchy_equi_new} is redundant.
Here we collect the equations of the Einstein equivariant wave map system that we shall use and write them 
in the following form for more convenient usage later on.
\begin{subequations}\label{ewmequations}
\begin{align}
\gamma_r & = \halb \, r \, \mbo{\alpha} \, e^{2\gamma} \left( e^{-2\Omega} \, u_t^2 + e^{-2\gamma}\, u_r^2 + \frac{f^2(u)}{r^2} \right)\label{gamma_r} \\
\gamma_t & = r \, \mbo{ \alpha} \, u_t \,  u_r \label{gamma_t} \\
\Omega_r & = \halb \, r \, \mbo{\alpha} \, e^{2\gamma}   \left( e^{-2\Omega} \, u_t^2 + e^{-2\gamma} \, u_r^2 - \frac{f^2(u)}{r^2} \right)\label{omega_r} \\
\leftexp{3}{\square}_g u & = \frac{f_u(u)f(u)}{r^2} \label{wmequi}
 \end{align}
\end{subequations}
where,
 \[\leftexp{3}{\square}_g u = -e^{-2\Omega}(u_{tt} + (\gamma_t-\Omega_t)u_t) + e^{-2 \gamma}(u_{rr} + \frac{u_r}{r} + (\Omega_r - \gamma_r)u_r) \]
and $f_u (u)$ is the derivative of $f(u)$ with respect to $u$. \\
\begin{lem} \label{energy_cons}
 The energy $E(U)(t)$ is conserved
\footnote{
Even though we do not necessarily assume the existence of a timelike Killing vector, the fact that the energy is conserved is surprising yet
consistant with the works of Thorne \cite{thorne_cenergy} and Ashtekar-Varadarajan \cite{ash_var}} during the evolution of the Cauchy 
problem \eqref{ewmcauchy_equi_new} .
\end{lem}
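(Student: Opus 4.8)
The plan is to establish conservation of $E(U)(t)$ by a direct divergence computation, exploiting the explicit form of the Einstein-equivariant wave map equations \eqref{ewmequations}. Since $E(U)(t) = 2\pi \int_0^\infty \mathbf{e}(t,r')\, r' e^{\gamma(t,r')}\, d\,r'$, the goal is to show $\frac{d}{dt} E(U)(t) = 0$. First I would differentiate under the integral sign, which is justified by the smoothness and compact support of the data together with finite speed of propagation (so the integrand stays compactly supported in $r$ for each fixed $t$). This produces $\frac{d}{dt}E = 2\pi\int_0^\infty \left( \mathbf{e}_t\, r' e^\gamma + \mathbf{e}\, r' e^\gamma \gamma_t \right) d\,r'$. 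The strategy is then to rewrite this integrand as an exact $r$-derivative $\partial_r(\text{something})$ plus terms that cancel, so that the integral collapses to a boundary term at $r=0$ and $r=\infty$, both of which vanish (at infinity by compact support; at the axis by the regularity conditions $\gamma(t,0)=0$, $\Omega(t,0)=0$, $f(0)=0$, and smoothness of $u$).

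The key computational step is the following. Consider the momentum density appearing in the natural conservation law: with $\mathbf{X_1} = e^{-\Omega}\partial_t$ the unit timelike normal, the vector $\mathbf{P}^\mu_{\mathbf{X_1}} = \mathbf{T}^\mu{}_\nu \mathbf{X_1}^\nu$ is divergence-free because $\mathbf{X_1}$ is (proportional to) a timelike direction and, crucially, $\mathbf{T}$ is divergence-free ($\grad^\mu \mathbf{T}_{\mu\nu}=0$ by the wave map equation, equivalently by the contracted Bianchi identity applied to $\mathbf{E}_{\mu\nu} = \mbo{\alpha}\mathbf{T}_{\mu\nu}$). Actually $\mathbf{X_1}$ need not be Killing, so $\grad_\mu \mathbf{P}^\mu_{\mathbf{X_1}} = \mathbf{T}^{\mu\nu}\grad_\mu (\mathbf{X_1})_\nu$ is not identically zero; the point is that in $2+1$ dimensions with the metric \eqref{metric_polarform} this deformation-tensor contraction, after substituting \eqref{gamma_r}, \eqref{gamma_t}, \eqref{omega_r}, reduces to a perfect $r$-divergence. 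So I would compute $\grad_\mu \mathbf{P}^\mu_{\mathbf{X_1}}$ explicitly in $(t,r,\theta)$ coordinates, express it using the components $\mathbf{T}_{tt}, \mathbf{T}_{tr}, \mathbf{T}_{rr}, \mathbf{T}_{\theta\theta}$ computed in the Energy Momentum Tensor section, eliminate the metric derivatives $\Omega_r, \gamma_r, \gamma_t$ via the Einstein equations, and check that what remains integrates to zero. Equivalently, and perhaps more cleanly, one integrates the identity $\partial_\mu(\sqrt{-g}\,\mathbf{P}^\mu_{\mathbf{X_1}}) = \sqrt{-g}\,\grad_\mu \mathbf{P}^\mu_{\mathbf{X_1}}$ over the slab between $t=t_1$ and $t=t_2$, applies the divergence theorem, and observes that the only contributions are the flux through $\Sigma_{t_1}$ and $\Sigma_{t_2}$ (giving $E(t_1)$ and $E(t_2)$) plus a spatial-boundary term that vanishes.

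The main obstacle I anticipate is handling the behavior at the axis $r=0$: the energy density contains the term $\tfrac{f^2(u)}{r^2}$ and the flux/divergence expressions contain factors of $1/r$, so one must verify carefully that the boundary term at $r=0$ genuinely vanishes rather than producing a finite remainder. This requires using $f(0)=0$, $f_\rho(0)=1$ together with the smoothness of $u(t,r)$ in $r$ near the axis (so $f(u(t,r)) = O(r)$ and $\partial_r(f(u)) $ is bounded), and the gauge conditions $\gamma(t,0)=0$, $\Omega(t,0)=0$; Proposition \ref{maximal_equi} guarantees we may work in these coordinates globally. A secondary technical point is confirming that finite speed of propagation for the coupled Einstein-wave map system keeps the wave map supported in a compact $r$-interval for each $t$ in the existence interval, so that differentiation under the integral and the vanishing of the $r=\infty$ boundary term are both legitimate; this follows from the hyperbolic character of \eqref{ewmcauchy_equi_new} and the compact support of the initial data, but should be stated. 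Modulo these endpoint checks, the identity is a routine—if somewhat lengthy—substitution.
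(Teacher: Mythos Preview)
Your approach is correct and leads to the same conclusion, but it differs from the paper's argument in one notable way, and there is a small inaccuracy worth flagging.

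The paper does not compute the deformation tensor of $\mathbf{X_1}$ in this lemma. Instead it observes that the constraint equations \eqref{gamma_r} and \eqref{gamma_t} can be rewritten as
\[
-\partial_r\!\left(e^{-\gamma}\right) = \mbo{\alpha}\, r e^{\gamma}\mathbf{e},\qquad
-\partial_t\!\left(e^{-\gamma}\right) = \mbo{\alpha}\, r e^{\Omega}\mathbf{m},
\]
and then simply commutes the mixed partials $\partial_t\partial_r(e^{-\gamma}) = \partial_r\partial_t(e^{-\gamma})$ to obtain
$-\partial_t(r e^{\gamma}\mathbf{e}) + \partial_r(r e^{\Omega}\mathbf{m}) = 0$.
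This is exactly the statement $\grad_\nu \mathbf{P}^\nu_{\mathbf{X_1}} = 0$ in coordinate form, and Stokes' theorem between $\Sigma_\tau$ and $\Sigma_s$ finishes the proof. Your route---computing $\mathbf{T}^{\mu\nu}\grad_\mu(\mathbf{X_1})_\nu$ and eliminating $\gamma_t,\Omega_r$ via the Einstein equations---is the computation that the paper carries out \emph{later}, in Section~\ref{vfm} (see \eqref{div_pxi}); it yields the same conclusion but is longer. The paper's mixed-partial trick is more economical and makes the role of the constraint \eqref{gamma_r} transparent: $e^{-\gamma}$ is a potential for the momentum vector.

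One correction: you write that the deformation-tensor contraction ``reduces to a perfect $r$-divergence''. In fact it reduces to \emph{zero} after substituting \eqref{gamma_t} and \eqref{omega_r}; see \eqref{div_pxi}. This sharpens your argument, since there is then no residual bulk term to integrate away and no additional boundary contribution at $r=0$ to analyze beyond what is already needed for Stokes' theorem on the slab. Your concerns about axis regularity and compact support at infinity are appropriate, but with $\grad_\nu \mathbf{P}^\nu_{\mathbf{X_1}} = 0$ the argument is immediate.
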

\begin{proof}
 Consider two Cauchy surfaces $\Sigma_{s}$ and $\Sigma_{\tau}$ at $t=s$ and $t=\tau$ respectively, with $-1\leq \tau \leq s < 0$.
The compactly supported initial data ensures that each $\Sigma_t$ is asymptotically flat and each component of $\mathbf{T}_{\mu \nu} \to 0$
as $r \to \infty$. We shall now construct a divergence free vector field $\mathbf{P}_{\mathbf{X_1}}$ as follows
\footnote{ Later, in Section \ref{vfm} we shall illustrate a more general procedure of the construction of such ``momentum'' vector fields as
part of the vector fields method. Thanks are due to Vincent Moncrief for pointing out a simpler construction used here}.
Consider the Einstein's equations \eqref{gamma_r} and \eqref{gamma_t}. They can be rewritten as follows
\begin{align*}
 -\ptl_r\left(e^{-\gamma} \right) &=  r\,\mbo{\alpha}  e^{\gamma} \mathbf{e} \\
 -\ptl_t \left(e^{-\gamma} \right) &=  r\,\mbo{\alpha}  e^{\Omega} \mathbf{m}.
\end{align*}
From the smoothness of $\gamma$ we have $ -\ptl^2_{rt}\left(e^{-\gamma} \right)= -\ptl^2_{tr}\left(e^{-\gamma} \right)$, which implies
\begin{align}\label{commute_partials}
 - \ptl_t\left(re^{\gamma} \mathbf{e}\right) + \ptl_r(re^{\Omega}\mathbf{m}) =0.
\end{align}
Now define a vector 
\[ \mathbf{P}_{\mathbf{X_1}} \fdg= -e^{-\Omega}\,\mathbf{e}\, \ptl_t + e^{-\gamma}\, \mathbf{m}\, \ptl_r, \]
then the divergence of $\mathbf{P}_{\mathbf{X_1}}$ is given by
\begin{align*}
\nabla_\nu  \mathbf{P}_\mathbf{X_1} ^\nu &= \frac{1}{\sqrt{|g|}}\, \ptl_\nu \left(\sqrt{|g|}\,\mathbf{P}_\mathbf{X_1} ^\nu \right) \\
&= \frac{1}{re^{\gamma + \Omega}} \left(-\ptl_t \left( re^{\gamma}\,\mathbf{e}\,+ \ptl_r \left( re^{\Omega}\,\mathbf{m}\,\right)\right) \right) \\
&= 0
\end{align*}
from \eqref{commute_partials}.
Now let us apply the Stokes' theorem in the region whose boundary is $\Sigma_{s} \cup \Sigma_{\tau}$, then we have
\begin{align}
0= \int_{\Sigma_s} e^{\Omega} \mathbf{P}^t_\mathbf{X_1} \bar{\mu}_q-
\int_{\Sigma_{\tau}} e^{\Omega} \mathbf{P}^t_\mathbf{X_1} \bar{\mu}_q.
\end{align}
Therefore, it follows that 
\begin{align}
 E(U)(\tau)= E (U)(s)
\end{align}
for any $\tau$, $s$ such that  $-1\leq \tau \leq s < 0$.
\end{proof}
In the following lemma we shall prove that the metric functions $\gamma(t,r)$ and $\Omega(t,r)$ are uniformly bounded during the evolution of 
the Einstein-wave map system. This is also discussed in \cite{laan_equinotes}.
\begin {lem} \label{metric_uniform}
There exist constants $c^-_\gamma,c^+_\gamma,c^-_\Omega,c^+_\Omega$  such that the following uniform bounds 
\[c^-_\gamma \leq \gamma(t,r) \leq c^+_\gamma\]
\[c^-_\Omega \leq \Omega(t,r) \leq c^+_\Omega\]
on the metric functions $\gamma(t,r)$ and $\Omega(t,r)$ hold.
 \end {lem}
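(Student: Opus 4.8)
The plan is to read off all four constants directly from the Einstein constraint equations \eqref{gamma_r}--\eqref{omega_r}, using the energy conservation of Lemma~\ref{energy_cons} and the axis normalizations $\gamma(t,0)=\Omega(t,0)=0$.

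First I would treat $\gamma$. As in the proof of Lemma~\ref{energy_cons}, \eqref{gamma_r} rewrites as $-\ptl_r\big(e^{-\gamma}\big)=r\,\mbo{\alpha}\,e^{\gamma}\,\mathbf{e}\ge 0$, so for fixed $t$ the function $r\mapsto e^{-\gamma(t,r)}$ is non-increasing; together with $e^{-\gamma(t,0)}=1$ this gives $e^{-\gamma(t,r)}\le 1$, i.e. $\gamma(t,r)\ge 0$, so we may take $c^-_\gamma=0$. Integrating the same identity from $0$ to $r$,
\[
 e^{-\gamma(t,r)}=1-\mbo{\alpha}\int_0^r r'\,e^{\gamma(t,r')}\,\mathbf{e}(t,r')\,dr'
 \;\ge\; 1-\mbo{\alpha}\int_0^\infty r'\,e^{\gamma(t,r')}\,\mathbf{e}(t,r')\,dr'
 \;=\; 1-\frac{\mbo{\alpha}}{2\pi}\,E(U)(t),
\]
and by Lemma~\ref{energy_cons} the right-hand side equals the constant $\delta_0\fdg=1-\frac{\mbo{\alpha}}{2\pi}E_0$ (with $E_0$ the conserved energy). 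Since the data is compactly supported with finite energy, $u$ tends to a zero of $f$ at spatial infinity, so by finite speed of propagation $\mathbf{e}$ vanishes outside the domain of influence of the support; there $e^{-\gamma(t,\cdot)}$ is a positive constant equal to $\delta_0$, whence $\delta_0>0$. Setting $c^+_\gamma\fdg=-\log\delta_0$ (which depends only on $E_0$ and $\mbo{\alpha}$) we obtain $0\le\gamma(t,r)\le c^+_\gamma$.

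For $\Omega$ I would compare \eqref{omega_r} with \eqref{gamma_r}: adding and subtracting the two identities gives
\[
 \gamma_r+\Omega_r=r\,\mbo{\alpha}\,e^{2\gamma}\big(e^{-2\Omega}u_t^2+e^{-2\gamma}u_r^2\big)\ge 0,
 \qquad
 \gamma_r-\Omega_r=\mbo{\alpha}\,e^{2\gamma}\,\frac{f^2(u)}{r}\ge 0,
\]
so $|\Omega_r|\le\gamma_r$ pointwise. Integrating from the axis and using $\Omega(t,0)=\gamma(t,0)=0$ yields $|\Omega(t,r)|\le\gamma(t,r)\le c^+_\gamma$, so $c^\pm_\Omega\fdg=\pm c^+_\gamma$ works and the lemma follows.

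The only step needing genuine care is the strict positivity of $\delta_0$ --- equivalently, that the conserved energy stays strictly below the \emph{threshold} $2\pi/\mbo{\alpha}$. This is exactly where the compactly supported, asymptotically (conically) flat nature of the data is used, via the fact that $e^{-\gamma}$ is an honest positive function that becomes \emph{constant} outside the domain of influence; it is the local shadow of the conservation of the geometric mass at infinity emphasized in the preface. Everything else is a one-line comparison of ODEs.
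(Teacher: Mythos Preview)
Your argument is correct and follows essentially the same route as the paper: rewrite \eqref{gamma_r} as $-\ptl_r(e^{-\gamma})=\mbo{\alpha}\,r e^{\gamma}\mathbf{e}$, integrate, and invoke energy conservation to bound $\gamma$, then use \eqref{omega_r} for $\Omega$. Two small differences are worth noting. First, for $\Omega$ the paper estimates the upper and lower bounds separately by pulling out a factor of $e^{\gamma}\le c(E_0)$ from $\Omega_r=r\,\mbo{\alpha}\,e^{2\gamma}(\mathbf{e}-\mathbf{f})$ and then controlling $\int r e^{\gamma}\mathbf{e}$ and $\int r e^{\gamma}\mathbf{f}$ by the energy; your pointwise comparison $|\Omega_r|\le\gamma_r$ is sharper and cleaner, yielding $|\Omega|\le\gamma$ directly without first using the $\gamma$-bound. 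Second, you are more explicit than the paper about the subthreshold condition $\delta_0=1-\frac{\mbo{\alpha}}{2\pi}E_0>0$: the paper simply writes $e^{\gamma_\infty}=c(E_0)$ and proceeds, whereas you correctly flag this as the one step requiring an external hypothesis on the data (asymptotic flatness/positivity of $e^{-\gamma}$ at infinity).
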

\begin{proof}
For simplicity of notation, we use a generic constant $c$ for the estimates on $\gamma(t,r)$ and $\Omega(t,r)$.
The Einstein's equation \eqref{gamma_r} for $\gamma_r$ can be rewritten as 
\[ - (e^{-\gamma} )_r  = \mbo{\alpha}\, r \,e^\gamma \mathbf{e}  \]
and integrating with respect to $r$, we get
\[
 1-e^{-\gamma} =  \mbo{\alpha} \int^r_0 \mathbf{e}\, r\, e^{\gamma} d\,r 
 = \frac{\mbo{\alpha}}{2\pi}  E(U)(t,r)
\]
so,
\[e^{\gamma} = \left( 1- \frac{\mbo{\alpha}}{2\pi}  E(U)(t,r)\right)^{-1}. \]
Let us define $\gamma_{\infty}(t) \fdg = \lim_{r \to \infty } \gamma (r,t),$
then we have
\[ e^{\gamma_\infty (t)} = \left( 1- \frac{\mbo{\alpha}}{2\pi}  E(U)(t)\right)^{-1}. \]
The energy is conserved $ E(U)(t) = E(U)(-1)$ so
$\gamma_\infty (t) = \gamma_\infty (-1)$ is also conserved during the evolution of the Einstein
wave map system. 
\\
In addition $E(U)(t,r)$ is a nondecreasing function of $r$, then so is $\gamma(t,r)$
\[1 = e^{\gamma(t,0)} \leq e^{\gamma(t,r)} \leq e^{\gamma_\infty(t)} = c(E_0). \]
Similarly let us consider the Einstein's equation \eqref{omega_r} for $\Omega_r$
\[ \Omega_r  = r \, \mbo{\alpha} \, e^{2\gamma} (\mathbf{e}-\mathbf{f})\]
and integrating with respect to $r$ we get
\begin{align*}
 \Omega(t,r) -\Omega(t,0) & \leq c(E_0) \int^r_0 (\mathbf{e}-\mathbf{f}) re^{\gamma} d\, r \\
& \leq c(E_0) \int^r_0 \mathbf{e}\,r\,e^{\gamma} d\, r \\
& \leq c(E_0)
\end{align*}
and
\begin{align*}
\Omega(t,r) & \geq - c(E_0) \int^r_0 \frac{\mathbf{f}}{2} r e^{\gamma} \, d\, r \\
& \geq - c(E_0) \int^r_0 \mathbf{e}\, r\, e^{\gamma} \, d\,r \\
& \geq - c(E_0)
\end{align*}
\end{proof}

\begin{lem}
 Assume that the target manifold $(N,h)$ satisfies
 \begin{align}\label{nosphereinfinity}
   \wp := \int^u_0 f(s)\,ds \to \infty \,\,\, \text{as}\,\,\, u \to \infty,
 \end{align}
 then there exists a constant $c$ dependent on initial
energy $E_0$ such that
\[ u \in L^{\infty} \text{with } ||u||_{\infty} \leq c(E_0) \] for every solution
$u$ of the equivariant wave map equation.
 \end{lem}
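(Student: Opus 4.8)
The plan is to bound $u$ pointwise by controlling the auxiliary potential $\wp(u):=\int_0^u f(s)\,ds$. Since $f$ is odd, $\wp$ is \emph{even}, and by hypothesis \eqref{nosphereinfinity} we have $\wp(v)\to+\infty$ as $v\to+\infty$; by evenness $\wp(v)\to+\infty$ as $|v|\to\infty$, so every sublevel set $\{v:\wp(v)\le C\}$ is bounded. Hence it suffices to establish a bound $|\wp(u(t,r))|\le c(E_0)$ uniform in $(t,r)$: a point where $|u(t,r)|$ were large enough would satisfy $\wp(u(t,r))>c(E_0)$, contradicting $|\wp(u(t,r))|\le c(E_0)$, and therefore $\|u\|_{L^\infty}$ is controlled in terms of $c(E_0)$.

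To obtain the uniform bound I would work on a fixed slice $\Sigma_t$ and integrate in $r$. By regularity of the equivariant wave map at the axis one has $f(u(t,0))=0$, hence $u(t,0)=0$ and $\wp(u(t,0))=0$; since $\partial_r\bigl(\wp(u(t,r))\bigr)=f(u)\,u_r$, the fundamental theorem of calculus gives
$$\wp(u(t,r))=\int_0^r f(u(t,r'))\,u_{r'}(t,r')\,dr'.$$
Next I would split the integrand by the elementary inequality
$$\bigl|f(u)\,u_{r'}\bigr|=\frac{|f(u)|}{r'}\cdot|u_{r'}|\cdot r'\le\frac12\left(\frac{f^2(u)}{r'^2}+u_{r'}^2\right)r',$$
which matches the shape of the energy density $\mathbf{e}=\frac12\bigl(e^{-2\Omega}u_t^2+e^{-2\gamma}u_r^2+f^2(u)r^{-2}\bigr)$. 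From the latter one reads off $f^2(u)r^{-2}\le 2\mathbf{e}$ and $u_r^2\le 2e^{2\gamma}\mathbf{e}$, while Lemma~\ref{metric_uniform} furnishes $1\le e^{\gamma(t,r)}\le c(E_0)$; consequently $\frac{f^2(u)}{r'^2}+u_{r'}^2\le c(E_0)\,\mathbf{e}\,e^{\gamma}$. Substituting, extending the integral to $[0,\infty)$, and invoking conservation of energy (Lemma~\ref{energy_cons}) yields
$$|\wp(u(t,r))|\le c(E_0)\int_0^\infty\mathbf{e}(t,r')\,r'\,e^{\gamma(t,r')}\,dr'=\frac{c(E_0)}{2\pi}\,E(U)(t)=\frac{c(E_0)}{2\pi}\,E_0,$$
which is exactly the sought uniform bound; combined with the first paragraph this finishes the argument.

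I expect the computation itself to be short, and the only genuinely delicate ingredient to be the boundary term at the axis, namely justifying $\wp(u(t,0))=0$ (equivalently $u(t,0)=0$) from the regularity of the equivariant map at $r=0$, together with the elementary but essential observation that an even function tending to $+\infty$ has bounded sublevel sets. The remaining steps — the AM--GM split, the comparison of the integrand with $\mathbf{e}$, and the appeals to Lemmas~\ref{metric_uniform} and~\ref{energy_cons} — are routine.
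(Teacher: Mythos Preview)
Your proof is correct and follows essentially the same route as the paper: integrate $\partial_r\wp(u)=f(u)u_r$ from the axis, split the integrand so that both pieces are dominated by the energy density weighted by $r\,e^{\gamma}$, and conclude by energy conservation and the bound on $\gamma$. The only cosmetic difference is that the paper uses Cauchy--Schwarz with the weight $(re^{-\gamma})^{\pm 1/2}$ where you use the AM--GM split; your treatment of the axis value $\wp(u(t,0))=0$ and of the evenness of $\wp$ is in fact more explicit than the paper's ``arguing via contradiction''.
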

\begin{proof}
Extending the technique used in Lemma 8.1 in \cite{shatah_struwe}, we consider
\begin{align*}
\wp(u(t,r)) & = \int_0^r \ptl_r (\wp(u(t,r))) \, dr  \\
& = \int_0 ^r f(u) \ptl_r u \,dr \\
& = \int_0 ^r \left( f(u) (re^{-\gamma})^{-1/2} \right) \left( \ptl_r u (re^{-\gamma})^{1/2} \right) \, dr. \\
\intertext{Consequently,}
|\wp(u(t,r))| & \leq \left( \int^r_0 (f(u))^2(re^{-\gamma})^{-1}\, dr\right)^{1/2} \left(\int^r_0 (\ptl_r u)^2 re^{-\gamma}\, dr \right)^{1/2} \\
& \leq \left( \int^\infty_0 (f(u))^2(re^{-\gamma})^{-1}\, dr\right)^{1/2} \left(\int^\infty_0 (\ptl_r u)^2 re^{-\gamma} \, dr\right)^{1/2} \\
& \leq c(E_0). 
 \end{align*}
Arguing via contradiction, the result follows.
\end{proof}

\section{Vector Fields Method, Monotonicity of Energy} \label{vfm}
Let $\mathbf{X}= F(t,r)\ptl_t + G(t,r) \ptl_r$ be a vector field in the spacetime and the corresponding momentum $\mathbf{P}_\mathbf{X}$ is
given by the contraction of $\mathbf{T}$ with $\mathbf{X}$ i.e.,
\[ \mathbf{P}_{\mathbf{X}} = \mathbf{T}(\mathbf{X})\]
in coordinates,
\begin{align}\label{mom_def}
\mathbf{P}_\mathbf{X}^\mu = \mathbf{T}^\mu_{\,\,\,\nu} \mathbf{X}^{\nu}.
\end{align}
Henceforth we refer to the vector $\mathbf{X}$ as a multiplier due to \eqref{mom_def}.
The momentum $\mathbf{P}_\mathbf{X} ^\mu$ in $(t,r,\theta)$ coordinates is then
\begin{align*}
 \mathbf{P}_\mathbf{X} ^\mu &=  \mathbf{T}^\mu_{\nu} \mathbf{X}^\nu \\
& = \mathbf{T}^\mu_t \, F + \mathbf{T}^\mu_r \, G \\
\mathbf{P}_\mathbf{X}^t & = \mathbf{T} ^t_t \, F +  \mathbf{T}^t_r \, G  \\
&= -(\mathbf{e} \, F + e^{(\gamma-\Omega)} \mathbf{m} \, G ) \\
\mathbf{P}_\mathbf{X}^r &= \mathbf{T} ^r_t \, F +  \mathbf{T}^r_r \, G  \\
&= ( e^{(\Omega - \gamma)}\mathbf{m} \, F + (\mathbf {e} -\mathbf{f})\, G) \\
\mathbf{P}_\mathbf{X}^\theta &= \mathbf{T}^\theta _{\,\,\,\nu}\, \mathbf{X}^{\nu} \\
&= 0.
\end{align*}
So the momentum vector $ \mathbf{P}_\mathbf{X}$
\begin{align*}
 \mathbf{P}_\mathbf{X} =& -(e^{\Omega}\mathbf{e}\,F + e^{\gamma}\mathbf{m}\,G)\mathbf{X_1} + (e^{\Omega}\,\mathbf{m}\,F + e^\gamma(\mathbf{e}-\mathbf{f})G)
\mathbf{X_2} \\
 =&-(\mathbf{e} \, F + e^{(\gamma-\Omega)} \mathbf{m} \, G ) \ptl_t 
+ ( e^{(\Omega - \gamma)}\mathbf{m} \, F + (\mathbf {e} -\mathbf{f})\, G) \ptl_r.
\end{align*}
In the following we shall calculate the covariant divergence of the momentum vector $\mathbf{P}_\mathbf{X}$. We have,
\begin{align}
 \grad_\nu \mathbf{P}_\mathbf{X}^\nu =& \grad_\nu (\mathbf{T}^\nu_{\,\,\mu}\,\mathbf{X}^\mu) \notag\\
\intertext{using the Leibnitz rule,} \notag \\
\grad_\nu \mathbf{P}_\mathbf{X}^\nu =& \,\mathbf{X}^\mu\, \grad_\nu (\mathbf{T}^\nu_{\,\,\mu}) + \mathbf{T}^\nu_{\,\,\mu}\,\grad_\nu (\mathbf{X}^\mu) \label{se_divfree}\\
\intertext{since the stress energy tensor $\mathbf{T}$ is divergence free, the first term in the right hand side of $\eqref{se_divfree}$ drops out, therefore} \notag \\
\grad_\nu \mathbf{P}_\mathbf{X}^\nu =& \mathbf{T}^{\mu\nu} \, \grad_{\mu}\mathbf{X}_\nu \notag \\
=& \halb \, \leftexp{(\mathbf{X})} {\mbo{\pi}}_{\mu \nu} \mathbf{T}^{\mu \nu},  \notag
\end{align}
where the so called deformation tensor $\leftexp{(\mathbf{X})} {\mbo{\pi}}_{\mu \nu}$ is given by
\begin{align*}
\leftexp{(\mathbf{X})} {\mbo{\pi}}_{\mu \nu} \fdg=& \nabla_\mu \mathbf{X}_\nu + \nabla_\nu \mathbf{X}_\mu \\
=& g_{\sigma \nu}\ptl_\mu \mathbf{X}^\sigma + g_{\sigma \mu}\ptl_\nu \mathbf{X}^\sigma + \mathbf{X}^\sigma \ptl_\sigma g_{\mu \nu}.
\end{align*}
For the form of the metric chosen in \eqref{metric_polarform}, we get different components of 
$ \leftexp{(\mathbf{X})} {\mbo{\pi}}_{\mu \nu} $
to be the following
\begin{align*}
\leftexp{(\mathbf{X})} {\mbo{\pi}}_{\mu \nu} =  \left( \begin{array}{ccc}
-2 e^{2\Omega}\left( F_t + F \Omega_t + G \Omega_r \right) & e^{2\gamma}G_t-e^{2\Omega}F_r & 0 \\
e^{2\gamma}G_t-e^{2\Omega}F_r & 2e^{2\gamma} \left(G_r + G \gamma_r + F \gamma_t \right) & 0 \\
0 & 0 & 2rG \end{array} \right).
\end{align*}
The divergence of $\mathbf{P_X}$ then is 
\begin{align}\label{general_div}
 \nabla_\nu  \mathbf{P}_{\mathbf{X}} ^\nu &= \halb \leftexp{(\mathbf{X})} {\mbo{\pi}}_{\mu \nu} \mathbf{T}^{\mu \nu} \notag \\
&= \halb \left(\leftexp{(\mathbf{X})} {\mbo{\pi}}_{tt} \mathbf{T}^{tt} + 2(\leftexp{(\mathbf{X})} {\mbo{\pi}}_{tr} \mathbf{T}^{tr})
 + \leftexp{(\mbo{X})} {\mathbf{\pi}}_{rr} \mathbf{T}^{rr} + \leftexp{(\mathbf{X})} {\mbo{\pi}}_{\theta \theta} \mathbf{T}^{\theta \theta} \right) \notag \\
& =  -e^{-2\Omega} \left( F_t + F \Omega_t + G \Omega_r \right) \mathbf{T}_{tt} + \left( F_r \, e^{-2\gamma} - G_t e^{-2\Omega}\right) \mathbf{T}_{tr} \notag \\
& \quad  + e^{-2\gamma} \left( G_r \,+ G \gamma_r + G \gamma_t \right) \mathbf{T}_{rr} + r^{-3}G \, \mathbf{T}_{\theta \theta} \notag \\
& = \halb e^{-2\Omega}\left( F(-\Omega_t + \gamma_t) + G(-\Omega_r + \gamma_r + r^{-1}) + G_r - F_t \right) u_t^2 \notag \\ 
& \quad + \halb e^{-2\gamma}\left( F(-\Omega_t + \gamma_t)+ G(-\Omega_r + \gamma_r - r^{-1}) + G_r - F_t \right) u_r^2 \notag \\
&\quad + \halb \left( F(-\Omega_t - \gamma_t)+ G(-\Omega_r - \gamma_r + r^{-1}) -G_r - F_t \right) \frac{f^2(u)}{r^2} \notag \\
& \quad + \left(F_r e^{-2 \gamma} - G_t \,e^{-2 \Omega}\right) u_t u_r
\end{align}
As mentioned is Section \ref{ovcurrent} in Introduction, construction of relevant identities using \eqref{general_div} and the
Stokes' theorem is central to our method to prove non-concentration of energy of equivariant self-gravitating wave maps.
In the following let us calculate the divergence of $\mathbf{P_X}$ for various choices of $\mathbf{X}$'s.
Consider  $\mathbf{X_1} = e^{-\Omega} \ptl_t$, the corresponding momentum $\mathbf{P}_\mathbf{X_1}$ is
\begin{align}
 \mathbf{P}_\mathbf{X_1} =& -\mathbf{e}\,\mathbf{X_1} + \mathbf{m}\,\mathbf{X_2} \notag \\
=& - e^{-\Omega}\mathbf{e}\, \ptl_t + e^{-\gamma}\mathbf{m}\,\ptl_r 
\end{align}
then we have,
\begin{align}\label{div_pxi}
 \nabla_\nu  \mathbf{P}_\mathbf{X_1} ^\nu & = \halb e^{-2\Omega} \left( e^{\Omega}\gamma_t \right) u_t^2  + 
 \halb e^{-2\gamma} \left( e^{\Omega}\gamma_t \right) u_r^2  \notag \\ 
& \quad - \halb \left( e^{\Omega}\gamma_t \right) \frac{f^2(u)}{r^2} - \Omega_r e^{-\Omega-2\gamma} u_t u_r \notag \\
&= e^{-\Omega} \left(\gamma_t (\mathbf{e}-\mathbf{f})-\Omega_r e^{-\gamma} \mathbf{m} \right) \notag \\
& = 0
\end{align}
after the usage of Einstein's equations \eqref{gamma_t} and \eqref {omega_r}. \\
Equivalently,
\begin{align}\label{equiv_div_px1}
0=\grad_\nu \mathbf{P}_\mathbf{X_1}^\nu &= \frac{1}{\sqrt {-g}} \ptl_\nu (\sqrt{-g}\, \mathbf{P}_\mathbf{X_1}^\nu) \notag \\
 & = \frac{1}{re^{\gamma + \Omega}} \left(-\ptl_t (re^{\gamma}\mathbf{e}) 
+ \ptl_r (re^{\Omega}\mathbf{m}) \right).
\end{align}
For $\mathbf{X_2} = e^{-\gamma}\ptl_r$ and 
\begin{align}\label{intro_px2}
\mathbf{P}_\mathbf{X_2} =& -\mathbf{m}\mathbf{X_1} + (\mathbf{e}-\mathbf{f}) \mathbf{X_2} \notag \\
 =&- e^{-\Omega} \mathbf{m} \, \ptl_t + e^{-\gamma} (\mathbf{e}-\mathbf{f}) \,\ptl_r,
\end{align}
the divergence $\grad_\nu \mathbf{P}_\mathbf{X_2}^\nu $ using \eqref{general_div} is
\begin{align}\label{div_px2}
\grad_\nu \mathbf{P}_\mathbf{X_2}^\nu & =  \halb \, \leftexp{(\mathbf{X_2})} {\mbo{\pi}}_{\alpha \beta} \mathbf{T}^{\alpha \beta} \notag\\
& = - e^{-\gamma} \Omega_r \,\mathbf{e} + \frac{1}{2r} e^{-\gamma} (e^{-2\Omega} u_t^2 - e^{-2\gamma}u_r^2 + \mathbf{f}) 
+ e^{-\Omega} \gamma_t \mathbf{m}.
\end{align}
Equivalently,
\begin{align}\label{equiv_div_px2}
 \grad_\nu \mathbf{P}_\mathbf{X_2}^\nu &= \frac{1}{\sqrt {-g}} \ptl_\nu (\sqrt{-g}\, \mathbf{P}_\mathbf{X_2}^\nu) \notag\\
 & = \frac{1}{re^{\gamma + \Omega}} \left(-\ptl_t (re^{\gamma}\mathbf{m}) 
+ \ptl_r ((\mathbf{e}-\mathbf{f}) re^{\Omega}) \right) .
\end{align}
Similarly for the choices of $\mathbf{X_3} \fdg = re^{-k\gamma}\ptl_r$ and $\mathbf{X_4} \fdg=r^a\,\ptl_r$, $a \in (\halb,1)$
we have
\begin{align}\label{intro_px3}
\mathbf{P_{X_3}} =& e^{(1-k)} \left( -r\,\mathbf{m}\,\mathbf{X_1} + r (\mathbf{e}-\mathbf{f}) \mathbf{X_2}\right) \notag\\
=& -re^{(1-k)\gamma - \Omega} \,\mathbf{m}\, \ptl_t + re^{-k\gamma}(\mathbf{e} -\mathbf{f})\ptl_r, \\
\notag\\
\nabla_\nu  \mathbf{P}^\nu_{\mathbf{X_3}} =&\, \halb \leftexp{(\mathbf{X_3})} {\mbo{\pi}}_{\alpha \beta} \mathbf{T}^{\alpha \beta} \notag\\
= &\,  e^{-k\gamma} e^{- 2 \Omega} u_t^2 - r e^{-k\gamma} \Omega_r \, \mathbf{e} + r e^{-k \gamma} (1-k) \gamma_r (\mathbf{e}-\mathbf{f}) 
 + r \, k \,\gamma_t \, e^{ (1-k)\gamma -\Omega} \, \mathbf{m} \notag \\
= &\, e^{-k\gamma} e^{- 2 \Omega} u_t^2 - k\mbo{\alpha} r^2 e^{(2-k)\gamma} ( \mathbf{e} (\mathbf{e}-\mathbf{f}) - \mathbf{m}^2) \notag\\
= & \,e^{- 2 \Omega} u_t^2
\end{align}
for the choice of $k =0,$ and 

\begin{align}\label{intro_px4}
 \mathbf{P}_\mathbf{X_4}  =& -e^{\gamma -\Omega} r^a \, \mathbf{m} \ptl_t + r^a \,(\mathbf{e}-\mathbf{f}) \ptl_r \notag \\
 =& e^{\gamma} r^a (-\mathbf{m} \mathbf{X_1} + (\mathbf{e}-\mathbf{f}) \mathbf{X_2}) \\
 \notag\\
\nabla_\nu  \mathbf{P}^\nu_\mathbf{X_4} =&\, \halb \left ( r^a (-\Omega_r + \gamma_r) + (1+a) r^{a-1}  \right) e^{-2\Omega} u^2_t \notag\\
&\quad+ \halb \left ( r^a (-\Omega_r + \gamma_r) + (a-1) r^{a-1}  \right) e^{-2\gamma} u^2_r \notag\\
& \quad + \halb \left ( - r^a (\Omega_r + \gamma_r) + (1-a) r^{a-1}  \right) \frac{f^2(u)}{r^2}\notag \\
=&\, \halb \left (  (1+a) r^{a-1}  \right) e^{-2\Omega} u^2_t
+ \halb \left ( (a-1) r^{a-1}  \right) e^{-2\gamma} u^2_r \notag\\
& \quad + \halb \left ( (1-a) r^{a-1}  \right) \frac{f^2(u)}{r^2}
 \end{align}
where we used Einstein's equations \eqref{omega_r} and \eqref{gamma_r} for $ \Omega_r$ and $\gamma_r$ respectively.
Let $J^-(O)$ be the causal past of the the point $O$ and $ I^- (O)$ the chronological past of $O$.
We will need the following definitions 
\begin{align*}
 \Sigma^O_t \fdg &= \Sigma_t \cap J^- (O) \\
K(t) \fdg &= \cup_{ t \leq t' < 0}\, \Sigma_{t'} \cap J^-(O) \\
C(t) \fdg &= \cup_{t \leq t' < 0} \, \Sigma_{t'} \cap (J^-(O) \setminus I^-(O)) \\
K(t,s) \fdg &= \cup_{ t \leq t' < s}\, \Sigma_{t'} \cap J^-(O) \\
C(t,s) \fdg &= \cup_{t \leq t' < s} \, \Sigma_{t'} \cap (J^-(O) \setminus I^-(O))
\end{align*}
for $-1\leq t < s <0.$
In the following we will try to understand the behaviour of various quantities of the wave map as one approaches this point in a limiting sense.
For this we will use the Stokes' theorem in the region $K(\tau,s),-1\leq \tau \leq s <0$ (as shown in the figure \ref{fig:Stokes_picture}) for divergence of 
vector fields $\mathbf{P}_\mathbf{X}$ for apt choices of the vector field $\mathbf{X}$. 
\begin{figure}[!hbt]
\psfrag{O}{$O$}
\psfrag{Ktaus}{$K(\tau,s)$}
\psfrag{teq0}{$t=0$}
\psfrag{teqs}{$t=s$}
\psfrag{teqtau}{$t=\tau$}
\psfrag{teq-1}{$t=-1$}
\psfrag{Fluxpx}{$\text{Flux}(\mathbf{P_X})(t,s)$}
\psfrag{E(s)}{$E^O(s)$}
\psfrag{E(tau)}{$E^O(\tau)$}
\centerline{\includegraphics[height=2.5in]{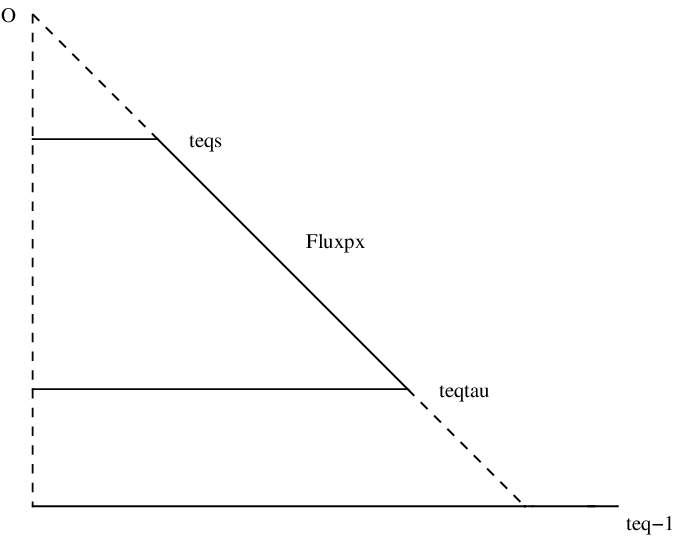}}
\caption{Application of the Stokes' theorem for the divergence of $\mathbf{P_X}$ } 
\label{fig:Stokes_picture}
\end{figure}
Let the volume 3-form of $(M,g)$ be 
\begin{align*}
\bar{\mu}_g \fdg = re^{\gamma + \Omega} d\,t \wedge d\,r \wedge d\,\theta
\end {align*}
and the area 2-form of $(\Sigma,q)$ be
\begin{align*}
 \bar{\mu}_q = r e^{\gamma} d\,r \wedge d\,\theta.
\end{align*}
Let us define 1-forms $\tild{\ell}$, $\tild{n}$ and $\tild{m}$ as follows
\begin{align*}
 \tild{\ell} \fdg =& -e^\Omega d\,t + e^{\gamma} d\,r \\
 \tild{n}\fdg =& -e^\Omega d\,t - e^{\gamma} d\,r \\
 \tild{m} \fdg =&\,rd\,\theta
\end{align*}
so we have,
\begin{align*}
 \bar{\mu}_g= \halb \left( \tild{\ell} \wedge \tild{n} \wedge \tild{m} \right).
\end{align*}
Let us also define\footnote{we chose these definitions for consistency
in orientation, which allows us to compare the relative signs of the terms in the estimates that follow}
the 2-forms $ \bar{\mu}_{\tild{\ell}}$ and $ \bar{\mu}_{\tild{n}}$ such that
\begin{align*}
 \bar{\mu}_{\tild{\ell}} \fdg=&\,-\halb \tild{n} \wedge \tild{m} \\
\bar{\mu}_{\tild{n}} \fdg =&\, \halb \tild{\ell} \wedge \tild{m}
\end{align*}
so that
\begin{align*}
 \bar{\mu}_g =\, & -\tild{\ell} \wedge \bar{\mu}_{\tild{\ell}} \,\,\,\,\text{and} \\
\bar{\mu}_g =\, & -\tild{n} \wedge \bar{\mu}_{\tild{n}}\,.
\end{align*}
We now apply the Stokes' theorem for the $\bar{\mu}_g$-divergence of $\mathbf{P_X}$ in the region $K(\tau,s)$ to get 

\begin{align}\label{stokes_gen} 
\int_{K(\tau,s)} \nabla_\nu  \mathbf{P}^\nu_{\mathbf{X}} \, \bar{\mu}_g  = \int_{\Sigma^O_{s}} e^{\Omega}\, \mathbf{P}^t_{\mathbf{X}}\, \bar{\mu}_q
- \int_{\Sigma^O_{\tau}}e^{\Omega}\,\mathbf{P}^t_{\mathbf{X}}\, \bar{\mu}_q
+ \text{Flux}(\mathbf{P_X}) (\tau,s) 
\end{align}so
where\footnote{note that, by definition, $\tild{\ell}_\mu \tild{\ell}^\mu= \tild{n}_\mu \tild{n}^\mu=0 $},
\begin{align*}
 \text{Flux} (\mathbf{P_X})(\tau,s) = - \int_{C(\tau,s)} \tild{n}(\mathbf{P_X})\, \bar{\mu}_{\tild{n}} .
 \end{align*}
\begin{lem}\label{mon}
 $ E^O(\tau) \geq E^O(s)$ for $-1 \leq \tau < s <0 $
\end{lem}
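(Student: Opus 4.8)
The plan is to feed the divergence-free momentum vector $\mathbf{P}_{\mathbf{X_1}}$ into the Stokes' identity \eqref{stokes_gen} on the solid truncated cone $K(\tau,s)$ and then read off the sign of the flux through the null mantle. Since $\grad_\nu\mathbf{P}^\nu_{\mathbf{X_1}}=0$ by \eqref{div_pxi}, the bulk integral on the left of \eqref{stokes_gen} vanishes, so
\begin{align*}
0 = \int_{\Sigma^O_{s}} e^{\Omega}\,\mathbf{P}^t_{\mathbf{X_1}}\,\bar{\mu}_q - \int_{\Sigma^O_{\tau}} e^{\Omega}\,\mathbf{P}^t_{\mathbf{X_1}}\,\bar{\mu}_q + \text{Flux}(\mathbf{P}_{\mathbf{X_1}})(\tau,s).
\end{align*}
Note that $K(\tau,s)$ is a bounded region and, since $s<0$, its boundary consists only of the two spacelike caps $\Sigma^O_{\tau}$, $\Sigma^O_{s}$ and the truncated null cone $C(\tau,s)$, which stays away from the tip $O$; no contributions from spatial infinity or from a conical vertex arise.

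Next I would evaluate each piece. From $\mathbf{P}_{\mathbf{X_1}} = -e^{-\Omega}\mathbf{e}\,\ptl_t + e^{-\gamma}\mathbf{m}\,\ptl_r$ one has $e^{\Omega}\mathbf{P}^t_{\mathbf{X_1}} = -\mathbf{e}$, so each cap integral equals $-E^O(\,\cdot\,)$ by the definition of $E^O$, and the identity collapses to $E^O(s)-E^O(\tau) = \text{Flux}(\mathbf{P}_{\mathbf{X_1}})(\tau,s)$. For the flux, contracting $\tild{n}=-e^{\Omega}d\,t-e^{\gamma}d\,r$ with $\mathbf{P}_{\mathbf{X_1}}$ gives $\tild{n}(\mathbf{P}_{\mathbf{X_1}}) = \mathbf{e}-\mathbf{m}$, and using $\mathbf{e}=\halb\bigl((\mathbf{X_1}(u))^2+(\mathbf{X_2}(u))^2\bigr)+\halb\mathbf{f}$ together with $\mathbf{m}=\mathbf{X_1}(u)\mathbf{X_2}(u)$ yields
\begin{align*}
\tild{n}(\mathbf{P}_{\mathbf{X_1}}) = \halb\bigl(\mathbf{X_1}(u)-\mathbf{X_2}(u)\bigr)^2 + \halb\,\frac{f^2(u)}{r^2} \ge 0.
\end{align*}
With the orientation of $\bar{\mu}_{\tild{n}}$ fixed as in the conventions preceding the lemma (chosen precisely so that $\int_{C(\tau,s)}(\,\cdot\,)\,\bar{\mu}_{\tild{n}}\ge0$ for a non-negative integrand), the definition $\text{Flux}(\mathbf{P}_{\mathbf{X_1}})(\tau,s) = -\int_{C(\tau,s)}\tild{n}(\mathbf{P}_{\mathbf{X_1}})\,\bar{\mu}_{\tild{n}}$ shows the flux is non-positive, hence $E^O(s)\le E^O(\tau)$, which is the claim.

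The conceptual content is minimal once $\mathbf{P}_{\mathbf{X_1}}$ and its vanishing divergence are in hand; the real work is sign bookkeeping. The step I expect to need genuine care is matching the sign in the definition of $\text{Flux}$ against the orientation chosen for $\bar{\mu}_{\tild{n}}$ and the induced orientation on $C(\tau,s)$ coming from Stokes' theorem on $K(\tau,s)$ --- this is exactly why the $2$-forms $\bar{\mu}_{\tild{\ell}}$ and $\bar{\mu}_{\tild{n}}$ were normalized with those particular signs, and getting it wrong would flip the inequality. Everything else (boundedness of $K(\tau,s)$, absence of a vertex contribution, the completion of the square for $\mathbf{e}-\mathbf{m}$) is routine.
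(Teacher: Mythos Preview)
Your proof is correct and follows essentially the same route as the paper: apply Stokes' theorem for the divergence-free $\mathbf{P}_{\mathbf{X_1}}$ on $K(\tau,s)$, identify the cap integrals as $-E^O(\cdot)$, and observe that the flux integrand $\mathbf{e}-\mathbf{m}$ is nonnegative so the flux is nonpositive. The only cosmetic difference is that you complete the square to see $\mathbf{e}-\mathbf{m}\ge 0$, whereas the paper invokes Cauchy--Schwarz for $\mathbf{e}\ge|\mathbf{m}|$; these are the same inequality.
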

\begin{proof}
 
 Let us apply the Stokes' theorem \eqref{stokes_gen} to the vector field $\mathbf{P}_\mathbf{X_1}$.  We have
\begin{align} \label{stokes_mono}
 0  = -\int_{\Sigma^O_{s}} \mathbf{e}\,\,\bar{\mu}_q
+ \int_{\Sigma^O_{\tau}} \mathbf{e} \,\, \bar{\mu}_q
+ \text{Flux} (\mathbf{P}_\mathbf{X_1})(\tau,s) 
\end{align}
and
\begin{align*}
\text{Flux} (\mathbf{P}_\mathbf{X_1})(\tau,s) =&- \int_{C(\tau,s)} \tild{n}\,(\mathbf{P}_{\mathbf{X}_1})\,  \bar{\mu}_{\tild{n}}  \\
=&- \int_{C(\tau,s)} (\mathbf{e-m})\,  \bar{\mu}_{\tild{n}}\,.  
\end{align*}

\begin{figure}[!hbt]
\psfrag{O}{$O$}
\psfrag{Ktaus}{$K(\tau,s)$}
\psfrag{teq0}{$t=0$}
\psfrag{teqs}{$t=s$}
\psfrag{teqtau}{$t=\tau$}
\psfrag{teq-1}{$t=-1$}
\psfrag{Fluxpx}{$\text{Flux}(\mathbf{P_X})(t,s)$}
\psfrag{E(s)}{$E^O(s)$}
\psfrag{E(tau)}{$E^O(\tau)$}
\centerline{\includegraphics[height=2.5in]{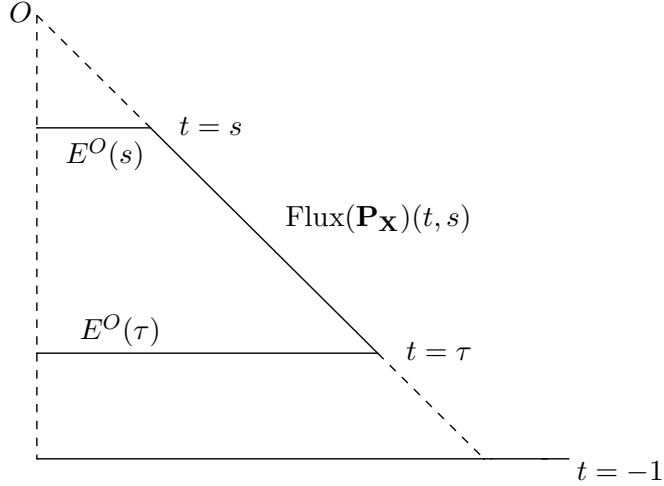}}
\caption{Monotonicity of Energy inside the past null cone of $O$ } 
\label{fig:Stokes_picture1} 
\end{figure}

By the Cauchy-Schwarz inequality we have $\mathbf{e} \geq |\mathbf{m}|.$
So we have $\text{Flux} (\mathbf{P}_\mathbf{X_1})(\tau,s) \leq 0$,
this implies  
\[ E^O(\tau) - E^O(s) \geq 0 \,\,\forall \,\, -1\leq \tau \leq s <0.\]
\end{proof}
However, one may note that although $E^O(\tau) \geq 0$, $0$ need not be the infimum of the sequence $E^O(\tau_n)$, $\{ \tau_n\}_n \in [-1,0), \tau_n \to
0$ as $n \to \infty.$ Hence, in principle there is a possibility of the energy blowing up at $O$ i.e $E^O(\tau) \not\to 0$
as $\tau \to 0.$
In the follow-up work of this paper we will try to understand the blow up criteria for self-gravitating wave maps
by rescaling (bubbling) the wave map in the neighbourhood of $O$. Just as on the Minkowski background, it is expected that a certain
minimum non-zero energy is needed for the blow up to happen at $O$ if the target manifold $N$ is positively curved, for instance a sphere
$\mathbb{S}^2$.\\
Let us define
\begin{align} \label{E_conc}
 E^O_{\text{conc}} \fdg =  \inf_{\Sigma^O_\tau} E^O (\tau)\,\, \text{for}\,\, \tau \in [-1,0]
\end{align}
As a consequence of Lemma \ref{mon}, \eqref{E_conc} is equivalent to 
\begin{align}
 E^O_{\text{conc}} = \lim_{\tau \to 0} E^O(\tau).
\end{align}
We say that the equivariant Cauchy problem \eqref{ewmcauchy_new} blows up\footnote{we use the phrases ``blow up'', 
``energy concentration'' and ``singularity'' synonymously} if  $ E^O_{\text{conc}} \neq 0$ and doesn't concentrate if
 $E^O_{\text{conc}} = 0.$
\begin{cor} \label{flux_px1_zero}
\begin{align*}
  \text{\textnormal{Flux}}(\mathbf{P}_\mathbf{X_1})(\tau) \to 0 \,\,\text{as}\,\, \tau \to 0.
\end{align*}
\end{cor}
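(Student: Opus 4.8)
The plan is to read the statement off directly from the Stokes identity \eqref{stokes_mono} for $\mathbf{P}_\mathbf{X_1}$ combined with the monotonicity of Lemma \ref{mon}. We interpret $\text{Flux}(\mathbf{P}_\mathbf{X_1})(\tau)$ as the flux through the \emph{full} truncated past null cone $C(\tau) = C(\tau,0)$, i.e.
\[ \text{Flux}(\mathbf{P}_\mathbf{X_1})(\tau) \fdg= \lim_{s \to 0^-} \text{Flux}(\mathbf{P}_\mathbf{X_1})(\tau,s) = -\lim_{s\to 0^-}\int_{C(\tau,s)}(\mathbf{e}-\mathbf{m})\,\bar{\mu}_{\tild{n}}, \]
so the first thing to check is that this limit exists. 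Since $\mathbf{e} - \mathbf{m} \geq 0$ by Cauchy--Schwarz (as already used in Lemma \ref{mon}), the region $C(\tau,s)$ grows with $s$ and hence $s \mapsto \text{Flux}(\mathbf{P}_\mathbf{X_1})(\tau,s)$ is nonincreasing; moreover, by \eqref{stokes_mono} it equals $E^O(s) - E^O(\tau)$, which is bounded below by $-E^O(\tau) \geq -E_0$. A nonincreasing function bounded below converges, so the limit exists and, using that $E^O(s) \to E^O_{\text{conc}}$ as $s \to 0^-$ (the monotone quantity $E^O$ converges to its infimum, cf. \eqref{E_conc}), we obtain
\[ \text{Flux}(\mathbf{P}_\mathbf{X_1})(\tau) = E^O_{\text{conc}} - E^O(\tau). \]

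The second and final step is to let $\tau \to 0$. By \eqref{E_conc} and Lemma \ref{mon} we have $E^O(\tau) \to E^O_{\text{conc}}$, whence
\[ \text{Flux}(\mathbf{P}_\mathbf{X_1})(\tau) = E^O_{\text{conc}} - E^O(\tau) \;\longrightarrow\; E^O_{\text{conc}} - E^O_{\text{conc}} = 0 \]
as $\tau \to 0$, which is the claim.

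There is essentially no obstacle here: the corollary is a bookkeeping consequence of the divergence-free property of $\mathbf{P}_\mathbf{X_1}$ established in \eqref{div_pxi}, the nonnegativity of the null-flux integrand $\mathbf{e}-\mathbf{m}$, and the monotonicity of $E^O$. The only point worth a sentence of care is the passage $s \to 0^-$, i.e. taking the flux over the entire truncated cone including a neighbourhood of the hypothetical singular vertex $O$; but this is handled purely by monotone convergence, so one never has to integrate through $O$ itself and no regularity assumption at $O$ is needed.
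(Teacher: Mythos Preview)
Your proof is correct and follows essentially the same approach as the paper: pass to the limit $s\to 0^-$ in the Stokes identity \eqref{stokes_mono} to obtain $\text{Flux}(\mathbf{P}_\mathbf{X_1})(\tau)=E^O_{\text{conc}}-E^O(\tau)$, and then let $\tau\to 0$. You add a bit more care than the paper does in justifying the existence of the limit $s\to 0^-$ via monotone convergence, which is a welcome clarification but not a substantive departure.
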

\begin{proof}
Consider the equation \eqref{stokes_mono} for $s \to 0$, we have
\begin{align} \label{flux_to_zero}
 0  = -E^O_{\text{conc}}
+ \int_{\Sigma^O_{\tau}} \mathbf{e} \,\,\bar{\mu}_q
+ \text{Flux} (\mathbf{P}_\mathbf{X_1})(\tau)
\end{align}
where
\begin{align*}
 \text{Flux}(\mathbf{P_X})(\tau) \fdg = \lim_{s\to 0} \text{Flux}(\mathbf{P_X})(\tau,s)
\end{align*}
Now by the definition \eqref{E_conc}, as $\tau \to 0$ we get
\begin{align}
\lim_{\tau \to 0} \int_{\Sigma^O_{\tau}} \mathbf{e} \,\,\bar{\mu}_q \to E^O_{\text{conc}} \label{}
\end{align}
Therefore, it follows from \eqref{flux_to_zero} that $\text{\textnormal{Flux}}(\mathbf{P}_\mathbf{X_1})(\tau)\to 0$ as $\tau \to 0.$
\end{proof}
\section{Coordinate Null Basis Vectors}
Previously, we defined the 1-forms $\tild{\ell}$ and $\tild{n}$. Their corresponding vectors are null, given by
\begin{align*}
 \tild{\ell}   =&\, e^{-\Omega}\ptl_t + e^{-\gamma}\ptl_r \\
\tild{n} =&\, e^{-\Omega} \ptl_t - e^{\gamma}\ptl_r \,.
\end{align*}
Let us consider their Lie bracket $[ \tild{\ell}, \tild{n}]$,
since 
\begin{align*}
[ \tild{\ell}, \tild{n}] &\equiv 2 \left( e^{-\gamma} \Omega_r\, \mathbf{X_1} - e^{-\Omega}\gamma_t \,\mathbf{X_2} \right) \\
&\equiv 2 e^{- (\gamma + \Omega)} (\Omega_r \ptl_t -\gamma_t \ptl_r) \\
& \not\equiv 0 
\end{align*}
 $\tild{\ell}$ and $\tild{n}$ cannot necessarily be part of a coordinate basis. We shall try to normalize  $\tild{\ell}$ and $\tild{n}$ 
to get coordinate basis vectors. So let us introduce a coordinate null triad $\ell$, $n$ and $m$
\begin{align}\label{coonulltriad}
\ell \fdg = e^\mathcal{F} \tild{\ell},\,n \fdg = e^\mathcal{G} \tild{n}\,\,\, \text{and}\,\,\, m \fdg= \frac{1}{r}\ptl_\theta, 
\end{align}
where the scalar functions $\mathcal{F}$ and $\mathcal{G}$ such that 
$[\ell, n ] \equiv 0$. Furthermore, $\mathcal{F}$ and $\mathcal{G}$ can be set to $0$ on the axis.
Now consider $[ \ell , n ],$
\begin{align*}
 [ \ell , n ] & = e^{(\cal{F} + \cal {G})} \left( [\tild{\ell},\tild{n}] + \tild{\ell}(\cal{G}) \tild{n} - \tild{n}(\cal{F})\tild{\ell} \right) \\
& = e^{(\cal{F} + \cal {G})} \left \{ \left(2 e^{- (\gamma + \Omega)} \Omega_r + e^{-\Omega} \left(\ell(\cal{G})-n (\cal{F}) \right)\right) \ptl_t 
- \left( 2 e^{- (\gamma + \Omega)}\gamma_t + e^{-\gamma} \left( \ell(\cal{G}) +n (\cal{F}) \right) \right) \ptl_r  \right \} .
\end{align*}
So  $ [ \ell , n ] \equiv 0 \iff$ $\cal{F}$ and $\cal{G}$ are such that 
\begin{align*}
 \tild{\ell}(\cal{G}) - \tild{n} (\cal{F}) &= -2 r \mathbf{\alpha} e^{\gamma} (\mathbf{e} -\mathbf{f}) \\
\tild{\ell}(\cal{G}) + \tild{n} (\cal{F}) &= 2 r \mathbf{\alpha} e^{\gamma} \mathbf{m}
\end{align*}
 $\iff$
\begin{subequations} \label{nullode}
\begin{align}
 \tild{\ell}(\cal{G}) &= -r \mathbf{\alpha} e^{\gamma} (\mathbf{e} + \mathbf{m} -\mathbf{f}) \\
\tild{n}(\cal{F}) &= r \mathbf{\alpha} e^{\gamma} (\mathbf{e} - \mathbf{m} -\mathbf{f}). 
\end{align}
\end{subequations}

\begin{lem} \label{null_uniform}
 There exist constants $c^{\,-}_{\,\cal{G}},\,c^{\,+}_{\,\cal{G}},\,c^{\,-}_{\,\cal{F}}\,\text{and}\,\,c^{\,+}_{\,\cal{F}} $ such that
 the following uniform bounds hold
\begin{align*}
 c^{\,-}_{\,\cal{G}} \,\leq&\, \cal{G}\, \leq\, c^{\,+}_{\,\cal{G}} \\
 c^{\,-}_{\,\cal{F}}\, \leq&\, \cal{F}\, \leq \,c ^{\,+}_{\,\cal{F}}. \\
 \end{align*}
\end{lem}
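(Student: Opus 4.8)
The plan is to integrate the transport equations \eqref{nullode} along their null characteristics, starting from the axis $r=0$ on which $\mathcal{F}$ and $\mathcal{G}$ are normalised to vanish, and then to bound the resulting line integrals by the conserved total energy $E_0$ by comparing them with the flux of the divergence-free momentum $\mathbf{P}_{\mathbf{X_1}}$.

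First I would record the algebraic identities that control the right-hand sides of \eqref{nullode}. Since $\mathbf{e}=\halb(\mathbf{e_0}+\mathbf{f})$ and $\mathbf{m}=\mathbf{X_1}(u)\,\mathbf{X_2}(u)$,
\begin{align*}
\mathbf{e}+\mathbf{m} &= \halb\bigl(\mathbf{X_1}(u)+\mathbf{X_2}(u)\bigr)^2+\halb\mathbf{f}\geq 0, \\
\mathbf{e}-\mathbf{m} &= \halb\bigl(\mathbf{X_1}(u)-\mathbf{X_2}(u)\bigr)^2+\halb\mathbf{f}\geq 0,
\end{align*}
whence $|\mathbf{e}+\mathbf{m}-\mathbf{f}|\leq\mathbf{e}+\mathbf{m}$ and $|\mathbf{e}-\mathbf{m}-\mathbf{f}|\leq\mathbf{e}-\mathbf{m}$. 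Fix a point $p=(t_p,r_p)$ with $r_p>0$ and let $t\mapsto(t,r(t))$ be the integral curve of $\tild{\ell}$ through $p$, traced backwards until it meets the axis at some $(t_0,0)$ with $t_0<t_p$ (the case in which it exits through $\Sigma_{-1}$ first is discussed below). Integrating the first equation of \eqref{nullode} along this curve and using the uniform bounds of Lemma \ref{metric_uniform},
\[ |\mathcal{G}(p)| = \Bigl|\int_{t_0}^{t_p} r(t)\,\mbo{\alpha}\,e^{\gamma+\Omega}\,(\mathbf{e}+\mathbf{m}-\mathbf{f})\,dt\Bigr| \leq \mbo{\alpha}\,e^{c^+_\gamma}\int_{t_0}^{t_p} r(t)\,e^{\Omega}\,(\mathbf{e}+\mathbf{m})\,dt, \]
and analogously for $\mathcal{F}$ along the $\tild{n}$-characteristic, with $\mathbf{e}-\mathbf{m}$ in place of $\mathbf{e}+\mathbf{m}$.

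It then remains to bound the line integral on the right by the total energy. I would apply the Stokes' theorem for $\grad_\nu\mathbf{P}_{\mathbf{X_1}}^\nu$, which vanishes by \eqref{div_pxi}, on the triangular region bounded by the $\tild{\ell}$-segment from $(t_0,0)$ to $p$, the axis segment $\{r=0,\ t_0\leq t\leq t_p\}$, and the slice $\{t=t_p,\ 0\leq r\leq r_p\}$. The axis contributes nothing, because $\mathbf{P}_{\mathbf{X_1}}^r=e^{-\gamma}\mathbf{m}$ and $\mathbf{m}(t,0)=0$: the equivariance and regularity of $U$ force $u(t,0)=0$, hence $u_t(t,0)=0$. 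The slice contributes $E(U)(t_p,r_p)$, which is at most $E(U)(t_p)=E_0$ by Lemma \ref{energy_cons} since $E(U)(t,\cdot)$ is nondecreasing in $r$. The null segment contributes, up to an overall sign, $2\pi\int_{t_0}^{t_p} r(t)\,e^{\Omega}\,(\mathbf{e}+\mathbf{m})\,dt$, which is nonnegative by the identity above; hence divergence-freeness forces
\[ 2\pi\int_{t_0}^{t_p} r(t)\,e^{\Omega}\,(\mathbf{e}+\mathbf{m})\,dt = E(U)(t_p,r_p)\leq E_0. \]
Setting $c^+_{\mathcal{G}}\fdg=\mbo{\alpha}\,e^{c^+_\gamma}E_0/(2\pi)$ and $c^-_{\mathcal{G}}\fdg=-c^+_{\mathcal{G}}$ then gives $c^-_{\mathcal{G}}\leq\mathcal{G}\leq c^+_{\mathcal{G}}$, and the argument for $\mathcal{F}$ is identical, now using the flux of $\mathbf{P}_{\mathbf{X_1}}$ through an ingoing null segment and $\mathbf{e}-\mathbf{m}\geq 0$.

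The one delicate point is the Stokes' theorem bookkeeping: the triangular region must be oriented so that the null-segment flux carries a definite sign while the other two boundary terms come out exactly as $E(U)(t_p,r_p)$ and $0$. The remaining, minor, issue is the case in which the characteristic traced backwards from $p$ reaches the initial slice $\Sigma_{-1}$ before meeting the axis; there one absorbs into the constant the value of $\mathcal{F}$ or $\mathcal{G}$ on $\Sigma_{-1}$, which is bounded in terms of the (smooth, compactly supported) initial data --- equivalently, outside the compact support of the data the right-hand sides of \eqref{nullode} vanish and $\mathcal{F}$, $\mathcal{G}$ are constant along characteristics there.
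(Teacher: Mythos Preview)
Your proof is correct and follows essentially the same route as the paper's: integrate \eqref{nullode} along the $\tild{\ell}$- and $\tild{n}$-characteristics from the axis, bound the integrands via $|\mathbf{e}\pm\mathbf{m}-\mathbf{f}|\leq\mathbf{e}\pm\mathbf{m}$, and control the resulting null line integrals by the energy. The only difference is that the paper asserts the final step as ``$\leq c(E_0)$'' without further comment, whereas you supply it explicitly via Stokes' theorem for $\mathbf{P}_{\mathbf{X_1}}$ on a triangular region; these are equivalent, since by the identities in the proof of Lemma~\ref{energy_cons} the relevant $1$-form $re^{\gamma}\mathbf{e}\,dr+re^{\Omega}\mathbf{m}\,dt$ is exactly $-\mbo{\alpha}^{-1}\,d(e^{-\gamma})$, so the null line integral equals $\mbo{\alpha}^{-1}(1-e^{-\gamma(p)})$ and is bounded directly by Lemma~\ref{metric_uniform}.
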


\begin {proof}
 From \eqref{nullode} we have the following equations for $\tild{\ell}(\cal{G})$ and $\tild{n} (\cal{F}) $
\begin{align*}
 \tild{\ell}(\cal{G}) &= -r \mathbf{\alpha} e^{\gamma} (\mathbf{e} + \mathbf{m} -\mathbf{f}) \\
\tild{n}(\cal{F}) &= r \mathbf{\alpha} e^{\gamma} (\mathbf{e} - \mathbf{m} -\mathbf{f}) 
\end{align*}
 $\tild{\xi}$ and $\tild{\eta}$ are the parameters along the integral curves of $\tild{\ell}$ and $\tild{n}$ 
respectively.
\begin{figure}[!hbt]
\psfrag{teqtau}{$t=\tau$}
\psfrag{teq-1}{$t=-1$}
\psfrag{O}{$O$}
\psfrag{xitild}{$\tild{\xi}$}
\psfrag{etatild}{$\tild{\eta}$}
\centerline{\includegraphics[height=2.5in]{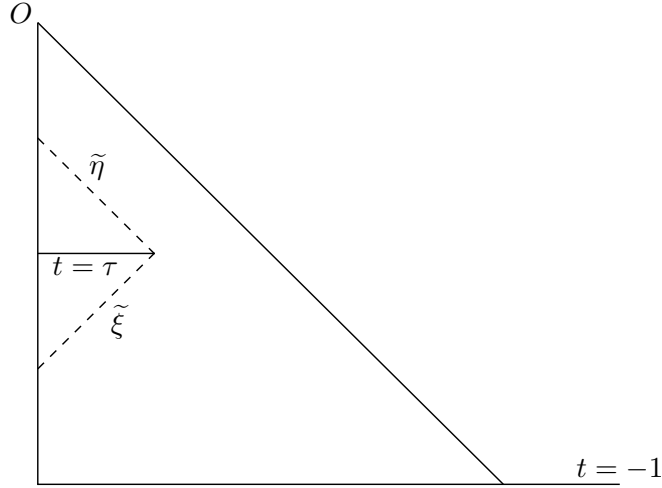}}

\caption{Application of the fundamental theorem of calculus along the integral curves of $\tild{\ell}$\,\,(parameterized by 
$\tild{\xi}$\,) and $\tild{n}$\,\,(parameterized by $\tild{\eta}$\,), for the estimates on $\cal{G}$ and $\cal{F}$ respectively.} 
\label{fig:null_coordinates_estimates}
\end{figure} 
Integrating each of \eqref{nullode} using the fundamental theorem of calculus in the region $J^-(O)$, we have
\begin{align*}
 \cal{G}(\tild{\xi}) &= \cal{G}(0) - \int^{\tild{\xi}}_0 \tild{\ell}(\cal{G}) d\,\tild{\xi}' \\
\cal{F}(\tild{\eta}) &= \cal{F}(0) - \int^{\tild{\eta}}_0 \tild{n}(\cal{F}) d\,\tild{\eta}'.
\end{align*}
Therefore, we have
\begin{align*}
  \cal{G}(\tild{\xi})  &= -\int^0_{\tild{\xi}} \ptl_{\tild{\xi}'} \cal{G}  \, d \, \tild{\xi}' \\
 &= \mbo{\alpha} \int^0_{\tild{\xi}} re^{\gamma}(\mathbf{e} + \mathbf{m}-\mathbf{f}) \, d\, \tild{\xi}'\\
 & \leq \mbo{\alpha} \int^0_{\tild{\xi}} re^{\gamma}(\mathbf{e} + \mathbf{m}) \, d\, \tild{\xi}' \\
 & \leq c(E_0)
\end{align*}
and 
\begin{align*}
 \cal{G}(\tild{\xi}) &= \mbo{\alpha} \int^0_{\tild{\xi}} re^{\gamma} \left(\mathbf{e_0} + \mathbf{m}-\halb \mathbf{f}\right) \, d\, \tild{\xi}' \\
 & \geq -\halb \mbo{\alpha} \int^0_{\tild{\xi}} re^{\gamma}\mathbf{f} \, d\, \tild{\xi}' \\
 & \geq -\mbo{\alpha} \int^0_{\tild{\xi}} re^{\gamma}(\mathbf{e} + \mathbf{m}) \, d\, \tild{\xi}' \\
 & \geq -c(E_0).
\end{align*}
Similarly,
\begin{align*}
 \cal{F} (\tild{\eta}) &= -\int^{\tild{\eta}}_0 \tild{n} (\cal{F}) \, d\, \tild{\eta}' \\
 & = \mbo{\alpha} \int^0_{\tild{\eta}} re^{\gamma}(-(\mathbf{e}-\mathbf{m}) + \mathbf{f}) \, d\, \tild{\eta}' \\
 &= \mbo{\alpha} \int^0_{\tild{\eta}} re^{\gamma}\left(-(\mathbf{e_0}-\mathbf{m}) + \halb\mathbf{f} \right) \, d\, \tild{\eta}' \\
 & \leq \mbo{\alpha} \int^0_{\tild{\eta}} re^{\gamma}\left(\halb \mathbf{f}\right) \, d\, \tild{\eta}' \\
 & \leq \mbo{\alpha} \int^0_{\tild{\eta}} re^{\gamma}((\mathbf{e}-\mathbf{m})) \, d\, \tild{\eta}' \\
 & \leq c(E_0)
\end{align*}
and
\begin{align*}
\cal{F} (\tild{\eta}) &= \mbo{\alpha} \int^0_{\tild{\eta}} re^{\gamma}(-(\mathbf{e}-\mathbf{m}) + \mathbf{f}) \, d\, \tild{\eta}' \\
& \geq -\mbo{\alpha} \int^0_{\tild{\eta}} re^{\gamma}((\mathbf{e}-\mathbf{m})) \, d\, \tild{\eta}' \\
& \geq - c(E_0).
\end{align*}

\end {proof}
It may be noted that, the existence of solutions of the system \eqref{nullode} is equivalent to the null $1$-forms $\ell$ and $n$ being closed.
Let us then introduce the null coordinates $(\xi,\eta,\theta)$ such that the
line element in $(M,g)$ can be represented as
\begin{align*}
d\,s_g^2 = -e^{2Z(\xi,\eta)}\,  d\,\xi d\,\eta + r^2\,(\xi,\eta)\, d\, \theta^2
\end{align*}
Let us also introduce the scalar functions $T$ and $R$ such that 
\begin{align*}
 T + R &= \xi \\
T - R & = \eta .
\end{align*}
So the line element in $(T,R, \theta)$ coordinates is
\begin{align*}
 d\,s_g^2 = e^{2Z(T,R)}\, (- d\,T^2 + d\,R^2) + r^2\,(T,R)\, d\, \theta^2
\end{align*}
In null coordinates $(\xi,\eta,\theta)$, the Ricci tensor is
\begin{align*}
 \mathbf{R}_{\xi\xi} =& r^{-1} (2Z_\xi r_\xi - r_{\xi\xi}),\\
\mathbf{R}_{\xi\eta} =& -(2 Z_{\xi\eta} + r^{-1} r_{\xi \eta}),\\
\mathbf{R}_{\eta \eta} =& r^{-1} (2Z_\eta r_\eta - r_{\eta\eta}), \\
\mathbf{R}_{\theta\theta} =& 4 r\,e^{-2 Z}  r_{\xi\eta}, \\
\mathbf{R}_{\xi\theta} =& 0\,\,\text{and} \\
\mathbf{R}_{\eta\theta} =& 0.
\end{align*}
The scalar curvature is
\begin{align*}
 R_g={}&8 e^{-2 Z} \bigl( Z_{\xi\eta} + r^{-1}r_{\xi\eta} \bigr)
\end{align*}
and the Einstein tensor is given by
\begin{align*}
\mathbf{E}_{\xi\xi} =& r^{-1} (2 Z_\xi r_\xi - r_{\xi\xi}),\\
\mathbf{E}_{\xi\eta} =& r^{-1} r_{\xi \eta},\\
\mathbf{E}_{\eta \eta} =& r^{-1} (2 Z_\eta r_\eta - r_{\eta\eta}),\\
\mathbf{E}_{\theta \theta} =& -4 r^2 \,e^{-2 Z} Z_{\xi\eta},\\
\mathbf{E}_{\xi \theta} =& 0\,\,\text{and} \\
\mathbf{E}_{\eta\theta} =& 0.
\end{align*}
Let us consider the Jacobian $\mathbf{J}$ of the transition functions between $(t,r,\theta)$ and $(\xi,\eta,\theta)$
\begin{align*}
 \mathbf{J} \fdg= &  \left( \begin{array}{ccc}
\ptl_\xi t & \ptl_\eta t & \ptl_\theta t \\
\ptl_\xi r& \ptl_\eta r & \ptl_\theta r\\
\ptl_\xi \theta & \ptl_\eta \theta & \ptl_\theta \theta \end{array} \right) \\ \\
=& \left( \begin{array}{ccc}
e^{\mathcal{F}-\Omega} & e^{\mathcal{G}-\Omega} &0  \\
e^{\mathcal{F}-\gamma} & -e^{\mathcal{G}-\gamma} &0 \\
 0 & 0 & 1 \end{array} \right)
\end{align*}
then the determinant $|\mathbf{J}|$ and inverse Jacobian $\mathbf{J}^{-1}$ are given by
\begin{align*}
|\mathbf{J}| =& -2 e^{(\mathcal{F} + \mathcal{G}) - (\gamma + \Omega)} \\ \\
 \mathbf{J}^{-1} =& \halb \left( \begin{array}{ccc}
e^{-\mathcal{F}+\Omega} & e^{-\mathcal{F}+ \gamma} &0  \\
e^{-\mathcal{G}+\Omega} & -e^{\mathcal{G}+\gamma} &0 \\
 0 & 0 & 2 \end{array} \right).
\end{align*}
Therefore,
\begin{align}
 \ptl_t \xi =& \halb e^{-\cal{F} + \Omega},\,\, \ptl_r \xi = \halb e^{-\cal{F} + \gamma} \notag \\
\ptl_t \eta = & \halb e^{-\cal{G} + \Omega},\,\, \ptl_r \eta = -\halb e^{-\cal{G} + \gamma}
\end{align}
so that 
\begin{align*}
 d\, \xi &= \halb \left(  e^{(\mathcal{-F} + \Omega)} d\,t + e^ {(\mathcal{-F} + \gamma)} d\,r\right) \\
d\, \eta &= \halb \left(  e^{(\mathcal{-G} + \Omega)} d\,t - e^ {(\mathcal{-G} + \gamma)}d \, r \right).
\end{align*}

\begin{cor}
 There exist constants $c^{\,-}_{\,\mu\nu},c^{\,+}_{\,\mu \nu}$ and $C^{\,-}_{\,\mu \nu}, C^{\,+}_{\,\mu \nu}$
such that all the entries of the Jacobian $\mathbf{J}$ and its inverse 
$\mathbf{J}^{-1}$ are uniformly bounded 
\begin{align*}
 c^{\,-}_{\,\mu \nu} \,\leq&\, \mathbf{J}_{\,\mu \nu}\, \leq\, c^{\,+}_{\,\mu \nu} \\
C^{\,-}_{\,\mu \nu} \,\leq&\, \mathbf{J}^{-1}_{\,\mu \nu}\, \leq\, C^{\,+}_{\,\mu \nu}
\end{align*}
for $\mu,\nu = 0,1,2.$
\end{cor}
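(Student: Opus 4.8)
The plan is to observe that this corollary follows immediately by combining the uniform bounds already established for the metric functions and for the null-coordinate functions. First I would recall from the explicit expressions displayed just above that every entry of $\mathbf{J}$, and of $\mathbf{J}^{-1}$, is either one of the constants $0$ or $1$ (and the $2$ in the bottom right of $2\,\mathbf{J}^{-1}$), or else of the form $\pm e^{A}$ — up to the overall factor $\halb$ in the case of $\mathbf{J}^{-1}$ — where $A$ is one of the combinations $\mathcal{F}-\Omega$, $\mathcal{G}-\Omega$, $\mathcal{F}-\gamma$, $-\mathcal{G}-\gamma$ for $\mathbf{J}$, respectively $-\mathcal{F}+\Omega$, $-\mathcal{F}+\gamma$, $-\mathcal{G}+\Omega$, $-\mathcal{G}-\gamma$ for $\mathbf{J}^{-1}$.

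Next I would invoke Lemma \ref{metric_uniform}, which gives $c^-_\gamma \le \gamma \le c^+_\gamma$ and $c^-_\Omega \le \Omega \le c^+_\Omega$, together with Lemma \ref{null_uniform}, which gives $c^{\,-}_{\,\cal{G}} \le \cal{G} \le c^{\,+}_{\,\cal{G}}$ and $c^{\,-}_{\,\cal{F}} \le \cal{F} \le c^{\,+}_{\,\cal{F}}$; all of these constants depend only on the initial energy $E_0$. Adding the relevant inequalities shows that each exponent $A$ above lies in a bounded interval, so that $e^{A}$ is pinched between two strictly positive constants. Taking the trivial constant entries into account as well, one obtains the claimed two-sided bounds $c^{\,-}_{\,\mu\nu} \le \mathbf{J}_{\,\mu\nu} \le c^{\,+}_{\,\mu\nu}$ and $C^{\,-}_{\,\mu\nu} \le \mathbf{J}^{-1}_{\,\mu\nu} \le C^{\,+}_{\,\mu\nu}$, with the new constants written explicitly in terms of the ones in the two lemmas.

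There is essentially no obstacle here: the only step requiring care is the bookkeeping that each of the eight nontrivial entries is a single exponential monomial in the four controlled functions $\mathcal{F},\mathcal{G},\gamma,\Omega$, which is transparent from the displayed matrices. I would also remark that the strict positivity of the lower bounds on the exponential entries is exactly what makes the change of variables $(t,r,\theta)\leftrightarrow(\xi,\eta,\theta)$ a genuine diffeomorphism with uniformly controlled distortion — consistent with $|\mathbf{J}| = -2\,e^{(\mathcal{F}+\mathcal{G})-(\gamma+\Omega)} \ne 0$ everywhere — a fact that will be used freely when translating later estimates between the $(t,r)$ and $(\xi,\eta)$ pictures.
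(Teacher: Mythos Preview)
Your proposal is correct and takes exactly the same approach as the paper, which simply invokes Lemmas \ref{metric_uniform} and \ref{null_uniform} since every nontrivial entry of $\mathbf{J}$ and $\mathbf{J}^{-1}$ is an exponential in a bounded linear combination of $\mathcal{F},\mathcal{G},\gamma,\Omega$. (There are a couple of harmless sign slips in your list of exponents --- e.g.\ the $(1,1)$ entry of $\mathbf{J}$ has exponent $\mathcal{G}-\gamma$, not $-\mathcal{G}-\gamma$ --- but these do not affect the argument.)
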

\begin{proof}
The proof follows from Lemmas \ref{metric_uniform} and \ref{null_uniform}.
\end{proof}
\begin{cor}
 There exist constants $c^-_Z$ and $c^+_Z$ such that the following uniform bounds hold
 on the metric function $Z$ in null coordinates.

\begin{align}
 c^-_Z \leq Z \leq c^+_Z.
\end{align}
\end{cor}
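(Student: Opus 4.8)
The plan is to show that the null-coordinate conformal factor $Z$ coincides with $\halb(\mathcal{F}+\mathcal{G})$ up to an additive constant, after which the claim follows at once from the uniform bounds on $\mathcal{F}$ and $\mathcal{G}$ established in Lemma~\ref{null_uniform}.

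First I would match the two coordinate representations of the metric $g$. Since $\ptl_\xi=\ell=e^{\mathcal{F}}\tild{\ell}$ and $\ptl_\eta=n=e^{\mathcal{G}}\tild{n}$, and $\tild{\ell},\tild{n}$ are null with $g(\tild{\ell},\tild{n})=e^{-2\Omega}g_{tt}-e^{-2\gamma}g_{rr}=-2$, one computes
\[
 g_{\xi\xi}=g_{\eta\eta}=0,\qquad g_{\xi\eta}=e^{\mathcal{F}+\mathcal{G}}\,g(\tild{\ell},\tild{n})=-2\,e^{\mathcal{F}+\mathcal{G}}.
\]
On the other hand, reading off the components directly from the line element $d\,s_g^2=-e^{2Z}\,d\,\xi\, d\,\eta+r^2\,d\,\theta^2$ gives $g_{\xi\eta}=-\halb e^{2Z}$. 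Comparing the two forces $e^{2Z}=4\,e^{\mathcal{F}+\mathcal{G}}$, that is,
\[
 Z=\ln 2+\halb\bigl(\mathcal{F}+\mathcal{G}\bigr).
\]
The same identity drops out if instead one substitutes $d\,\xi=\halb e^{-\mathcal{F}}(e^{\Omega}d\,t+e^{\gamma}d\,r)$ and $d\,\eta=\halb e^{-\mathcal{G}}(e^{\Omega}d\,t-e^{\gamma}d\,r)$ into $-e^{2Z}\,d\,\xi\, d\,\eta$: the conjugate structure $(a+b)(a-b)$ cancels the $d\,t\,d\,r$ cross terms, and comparison with $-e^{2\Omega}d\,t^2+e^{2\gamma}d\,r^2$ yields $\tfrac14 e^{2Z-\mathcal{F}-\mathcal{G}}=1$.

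The conclusion is then immediate: by Lemma~\ref{null_uniform} there are constants with $c^{-}_{\mathcal{F}}\le\mathcal{F}\le c^{+}_{\mathcal{F}}$ and $c^{-}_{\mathcal{G}}\le\mathcal{G}\le c^{+}_{\mathcal{G}}$, so taking $c^-_Z\fdg=\ln 2+\halb(c^{-}_{\mathcal{F}}+c^{-}_{\mathcal{G}})$ and $c^+_Z\fdg=\ln 2+\halb(c^{+}_{\mathcal{F}}+c^{+}_{\mathcal{G}})$ gives $c^-_Z\le Z\le c^+_Z$. I expect no genuine obstacle here: the corollary is pure bookkeeping for the coordinate change, resting on the already-proved uniform control of $\gamma,\Omega$ (Lemma~\ref{metric_uniform}) and of $\mathcal{F},\mathcal{G}$ (Lemma~\ref{null_uniform}). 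The only point that needs a little care is the normalization convention in the line element --- whether ``$d\,\xi\, d\,\eta$'' denotes the symmetrized tensor product $\halb(d\,\xi\otimes d\,\eta+d\,\eta\otimes d\,\xi)$ --- which affects only the additive constant $\ln 2$ and is irrelevant to the boundedness assertion.
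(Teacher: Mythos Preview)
Your proposal is correct and follows essentially the same approach as the paper: match the two coordinate expressions of the metric to extract $Z$ as an affine function of $\mathcal{F}+\mathcal{G}$, then invoke Lemma~\ref{null_uniform}. Your treatment is in fact more careful about the normalization constant than the paper's own proof (which records $e^{Z}=\tfrac{1}{4}e^{\mathcal{F}+\mathcal{G}}$ rather than your $e^{2Z}=4e^{\mathcal{F}+\mathcal{G}}$), but as you note this discrepancy is irrelevant to the boundedness statement.
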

\begin{proof}
 We have
\begin{align*}
 -e^{2Z} \,d\,\xi\, d\,\eta = -e^{2\Omega}d\,t^2 + e^{2\gamma}d\,r^2,
\end{align*}
therefore,
\begin{align*}
 e^Z = \frac{1}{4} \, e^{\cal{F}+ \cal{G}}.
\end{align*}
The result now follows from the Lemma \ref{null_uniform}.
\end{proof}

\begin{cor}
 There exist constants $c_1,c_2,c_3,c_4$ such that the
pointwise bounds 
\begin{align*}
 r \geq & \, c_1\, R,\,\,\,\,\,\,\,\,\,\,\,\,\,\,\,\,\,  t \geq \,c_3\, T, \\
r \leq &\,c_2\, R \,\,\,\,\,\text{and}\,\,\,\,\,  t \leq  \, c_4\, T 
\end{align*}
hold for the scalar functions $r,t,R$ and $T$.
\end{cor}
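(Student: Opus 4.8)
The plan is to deduce both comparisons from the uniform two--sided bounds already established — for $e^{\pm\gamma},e^{\pm\Omega}$ in Lemma~\ref{metric_uniform} and for $e^{\pm\mathcal{F}},e^{\pm\mathcal{G}}$ in Lemma~\ref{null_uniform}, which together bound every entry of $\mathbf{J}$ and $\mathbf{J}^{-1}$ above and below — combined with two normalizations of the construction: the axis $\{r=0\}$ is the set $\{R=0\}$, and (fixing the additive constants in $\xi,\eta$) the tip $O$ lies at $\xi=\eta=0$, hence at $T=R=0$.

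First I would record the relevant transition derivatives. From the entries of $\mathbf{J}^{-1}$ together with $T=\tfrac12(\xi+\eta)$, $R=\tfrac12(\xi-\eta)$ one finds
\[
\ptl_r R=\tfrac14\,e^{\gamma}\bigl(e^{-\mathcal{F}}+e^{-\mathcal{G}}\bigr),\qquad
\ptl_t T=\tfrac14\,e^{\Omega}\bigl(e^{-\mathcal{F}}+e^{-\mathcal{G}}\bigr),
\]
whereas the mixed derivatives $\ptl_t R=\tfrac14\,e^{\Omega}(e^{-\mathcal{F}}-e^{-\mathcal{G}})$ and $\ptl_r T=\tfrac14\,e^{\gamma}(e^{-\mathcal{F}}-e^{-\mathcal{G}})$ are only bounded in absolute value. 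By Lemmas~\ref{metric_uniform}--\ref{null_uniform} the first two are bounded below \emph{and} above by positive constants depending only on $E_0$.

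For $r$ versus $R$: fixing $t$ and integrating in $r$ from the axis, where $R(t,0)=0$, gives $R(t,r)=\int_0^r\ptl_r R\,(t,r')\,dr'$, so the positive two--sided bound on the integrand yields $c_1R\le r\le c_2R$. For $t$ versus $T$: on the axis $\mathcal{F}=\mathcal{G}=\Omega=0$, hence $\ptl_t T\equiv\tfrac12$ there, and since $T(O)=0$ we get $T(t,0)=t/2$; for a general point $T(t,r)=\tfrac{t}{2}+\int_0^r\ptl_r T\,dr'$, so $|T(t,r)-\tfrac{t}{2}|\le c\,r\le c'R$. In $J^-(O)$ the conformally flat form $e^{2Z}(-dT^2+dR^2)+r^2d\theta^2$ of the metric gives $R\le|T|$, and substituting this produces $|t|\le c_3|T|$, i.e.\ $t\ge c_3T$; the inequalities involving $t,T$ near the axis and the matching of the four constants are then routine.

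The step I expect to be the main obstacle is the companion bound $t\le c_4T$ with $c_4>0$ \emph{uniform}, precisely where $P$ approaches the past null cone of $O$ and $R\le|T|$ degenerates to equality, so that the chain above only returns $|t|>0$. There I would argue along the null generator $\{\xi=0\}$ of that cone, on which $T=\eta/2$, $R=-\eta/2$ and $t=\int_0^{\eta}\ptl_\eta t\,d\eta'$ with $\ptl_\eta t=e^{\mathcal{G}-\Omega}$ uniformly two--sided bounded, so that $|t|$ and $|T|$ are each comparable to $|\eta|$ along the generator; combining this with the interior estimate — and with the fact that $t$ is a genuine time function for $-e^{2\Omega}dt^2+e^{2\gamma}dr^2+r^2d\theta^2$, so that $t<0$ strictly on $J^-(O)\setminus\{O\}$ — closes the remaining inequality with a uniform constant.
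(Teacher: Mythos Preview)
Your approach is essentially the paper's: bound the transition derivatives $\partial_R r,\ \partial_r R,\ \partial_T t,\ \partial_t T$ two-sidedly via Lemmas~\ref{metric_uniform} and~\ref{null_uniform}, then integrate using the normalizations $r=R=0$ on the axis and $t=T=0$ at $O$. You are in fact more careful than the paper about the integration paths for the $t$--$T$ comparison; the subtlety you flag for $t\le c_4 T$ near the cone is real, and your proposed fix along the generator $\{\xi=0\}$ (where $|t|\sim|\eta|\sim|T|$) together with integrating $\partial_T t$ inward at fixed $R$ is exactly the right way to close it.
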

\begin{proof}
 We have 
\begin{subequations} \label{pointwise_scalar}
\begin{align}
\ptl_{\,R}\, r \leq& \,\, |\ptl_R r| = |\ptl_\xi r -\ptl_\eta r | = | e^{\cal{F}-\gamma} + e^{\cal{G}-\gamma}| \leq c_1 (E_0) \\ \notag\\
\ptl_{\,r} R \leq& \,\, |\ptl_r R| = \halb |\ptl_r \xi -\ptl_r \eta| = \frac{1}{4} |e^{-\cal{G}_ \gamma} -e^{-\cal{G} + \gamma}| \leq c_2 (E_0)   \\ \notag \\
\ptl_{\,T} \,t \leq&\,\, |\ptl_T t| = | \ptl_\eta t + \ptl_\eta t|= |e^{\cal{F}-\Omega}+ e^{\cal{G} -\Omega} | \leq c_3 (E_0) \\ \notag \\
\ptl_{\,t}\, T \leq& \,\,|\ptl_t T| = \halb |\ptl_t \xi + \ptl_t \eta| = \frac{1}{4} |e^{-\cal{F}+ \Omega} + e^{-\cal{G}+ \Omega}| \leq c_4 (E_0).
\end{align}
\end{subequations}
The proof follows by applying the fundamental theorem of calculus to each of \eqref{pointwise_scalar} in the region $J^-(O)$ and noting
that at $O$, $t=T=0$ and $r=R=0$ on the axis. 
\end{proof}
Let us now revisit the Stokes' theorem for $\bar{\mu}_g$-divergence of $\mathbf{P}_{\mathbf{X}}$ in $K(\tau,s)$.
The 1-forms $\ell$ and $n$ are
\begin{align*}
\ell =& -e^{\cal{F}}\left(e^{\Omega} d\,t - e^{\gamma}d\,r \right) \\
n=&-e^{\cal{G}}\left( e^{\Omega} d\,t + e^{\gamma}d\,r \right)
\end{align*}
and we have
\begin{align*}
 d\,\xi =& -\halb e^{-(\cal{F}+ \cal{G})} n = -\halb e^{-\cal{F}} \tild{n}  \\
 d\,\eta =&-\halb e^{-(\cal{F}+\cal{G})}\ell = -\halb e^{-\cal{G}} \tild{\ell}
\end{align*}
The volume 3-form of $(M,g)$ is
\begin{align*}
 \bar{\mu}_g =&\, re^{\gamma + \Omega} d\,t\wedge d\,r \wedge d\,\theta \\
 =&\, \halb \, re^{2Z}d\,\eta \wedge d\,\xi \wedge d\,\theta
\end{align*}
Let us introduce the 2-forms $\bar{\mu}_\xi$ and $\bar{\mu}_\eta$ as follows
\begin{align*}
 \bar{\mu}_g= d\,\xi \wedge \bar{\mu}_\xi \\
 \bar{\mu}_g= d\,\eta \wedge \bar{\mu}_\eta
\end{align*}
so that
\begin{align*}
 \bar{\mu}_\xi  =& -\halb r e^{2Z} \left( d\,\eta \wedge d\,\theta  \right) \\
 \bar{\mu}_\eta =& \,\halb r e^{2Z} \left( d\,\xi \wedge d\,\theta  \right).
\end{align*}
Now,
\begin{align*}
 \text{Flux}(\mathbf{P}_{\mathbf{X}}) (\tau,s) =& \int_{C(\tau,s)} d\,\xi(\mathbf{P_X})\, \bar{\mu}_\xi, \\
 \intertext{for instance,}
 \text{Flux}(\mathbf{P}_{\mathbf{X_1}}) (\tau,s) =& \int_{C(\tau,s)} d\,\xi(\mathbf{P}_\mathbf{X_1}) \,\bar{\mu}_\xi,\\
 =& -\halb \int_{C(\tau,s)}e^{-\cal{F}}(\mathbf{e-m})  \bar{\mu}_\xi.
\end{align*}
Note that $d\,\xi (n)= d\,\eta (\ell)=0.$

\chapter{Non-Concentration of Energy}
In this Chapter we shall use the vector fields method introduced in Section \ref{vfm} to prove that
the energy of the system \eqref{ewmcauchy_equi_new} does not concentrate. We start with proving that
the energy does not concentrate away from the axis using the divergence free vector $\mathbf{P}_{\mathbf{X_1}}$.
\section{Non-Concentration of Energy Away from the Axis}


\begin{lem} \label{ann}
\begin{align*}
 E^{\,O}_{\text{ext}} (\tau) \fdg = \int_{B_{r_2(\tau)}\setminus B_{r_1(\tau)}} \mathbf{e} \,\,\bar{\mu}_q \to 0 \,\, \text{as} \,\, \tau \to 0,
\end{align*} 
where $r = r_2(\tau)$ is the radius where the $t = \tau$ slice intersects the $R = |T|$ curve i.e the mantel of the null cone $ J^-(O)$
 and  $r = r_1 (\tau)$ is the radius where the $ R = \lambda |T|$ curve intersects the $t = \tau$ slice, for 
$\lambda \in (0,1)$.
 Observe that both $r_1(\tau)$
and $r_2(\tau) \to 0$ as $\tau \to 0$.
\end{lem}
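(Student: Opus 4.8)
The plan is to exploit again the divergence-free momentum vector $\mathbf{P}_{\mathbf{X_1}}$ (see \eqref{div_pxi}), now on the spacetime region $\mathcal{S}=\mathcal{S}(\tau)$ depicted in Figure \ref{fig:annular_disc_1_intro}, which sits ``outside'' the interior of the past null cone of $O$ and collapses onto $O$ as $\tau\to0$. Its boundary has three pieces: the truncated past null mantel $\partial\mathcal{S}_1\subset C(\tau)$, a second null surface $\partial\mathcal{S}_2$ issuing from the inner edge $\{t=\tau,\ r=r_1(\tau)\}$ of the annulus, and the annular disc $\partial\mathcal{S}_3=B_{r_2(\tau)}\setminus B_{r_1(\tau)}$ in the slice $t=\tau$; the vertex at $O$ contributes nothing. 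Since $\nabla_\nu\mathbf{P}_{\mathbf{X_1}}^\nu=0$, applying the Stokes identity \eqref{stokes_gen} in $\mathcal{S}(\tau)$ expresses $E^{\,O}_{\text{ext}}(\tau)$, up to orientations, as the sum of the fluxes of $\mathbf{P}_{\mathbf{X_1}}$ through $\partial\mathcal{S}_1$ and through $\partial\mathcal{S}_2$.

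The flux through $\partial\mathcal{S}_1$ is harmless. Since $\partial\mathcal{S}_1$ is a portion of the mantel $C(\tau)$ and, by Cauchy--Schwarz, the flux density there is $\mathbf{e}-\mathbf{m}\ge0$, this flux is bounded by $|\text{Flux}(\mathbf{P}_{\mathbf{X_1}})(\tau)|$, which tends to $0$ as $\tau\to0$ by Corollary \ref{flux_px1_zero}. So everything reduces to showing that the flux of $\mathbf{P}_{\mathbf{X_1}}$ through the second surface $\partial\mathcal{S}_2$ vanishes in the limit $\tau\to 0$; there the sign gained from the dominant-energy inequality is of no use.

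For this I would couple the $\mathbf{P}_{\mathbf{X_1}}$ identity with a second one built from the radial multiplier $\mathbf{X_2}=e^{-\gamma}\ptl_r$ and its weighted variants $re^{-k\gamma}\ptl_r$ (i.e. $\mathbf{X_3}$). Applying \eqref{stokes_gen} to $\mathbf{P}_{\mathbf{X_2}}$ and $\mathbf{P}_{\mathbf{X_3}}$ on $\mathcal{S}(\tau)$ and estimating the bulk term $\int_{\mathcal{S}(\tau)}\nabla_\nu\mathbf{P}^\nu\,\bar{\mu}_g$ by means of the divergence formulas \eqref{div_px2}, \eqref{intro_px3}, the Einstein equations \eqref{ewmequations} and the uniform bounds on $\gamma,\Omega,\mathcal{F},\mathcal{G}$ from Lemmas \ref{metric_uniform} and \ref{null_uniform} (which also make $r,R$ and $t,T$ uniformly comparable), one obtains an auxiliary energy identity along $\partial\mathcal{S}_2$. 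Here two free parameters $k_\ell,k_n$ are at our disposal, and they are chosen precisely so that, in a suitable linear combination of the two identities, the sign-indefinite ``bad'' contributions --- those carrying $\Omega_r$, $\gamma_t$ and the wave-map potential $f^2(u)/r^2$ --- cancel or acquire a favourable sign. Parametrising $\partial\mathcal{S}_2$ by $t$ and denoting by $G(s)$ the flux of the resulting combination through $\partial\mathcal{S}_2\cap\{t\ge s\}$, one is left with a Gr\"onwall-type differential inequality, schematically
\begin{equation*}
-\,G'(s)\ \le\ \frac{c}{|s|}\,G(s)+\mathcal{E}(s),
\end{equation*}
whose error $\mathcal{E}(s)$ is dominated by the $\partial\mathcal{S}_1$-flux over $\{t\ge s\}$ and hence tends to $0$; integrating over $[\tau,s_0]$ for a fixed $s_0<0$ and feeding in the decay of the mantel flux then forces $G(\tau)\to 0$, and with it $E^{\,O}_{\text{ext}}(\tau)\to0$.

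The main obstacle is this last step: controlling the flux of the energy through $\partial\mathcal{S}_2$. The danger is that the wave-map potential $f^2(u)/r^2$ and the spacetime curvature enter the relevant identities with weights $1/r\sim 1/(\lambda|t|)$ that blow up at the vertex, while the flux itself is not a priori sign-definite. The whole purpose of the parameters $k_\ell,k_n$ --- and of working in the double-null coordinates of Section \ref{vfm}, where all metric quantities are uniformly comparable --- is to arrange the cancellations that make the Gr\"onwall inequality \emph{close}, yielding decay instead of growth; carrying out these cancellations together with the numerous supporting estimates is the technical heart of the argument, and by far the most delicate part.
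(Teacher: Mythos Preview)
Your opening is right and matches the paper: apply Stokes for the divergence-free $\mathbf{P}_{\mathbf{X_1}}$ on the tubular region $\mathcal{S}$, note that the mantel flux is controlled by $|\text{Flux}(\mathbf{P}_{\mathbf{X_1}})(\tau)|\to 0$ (Corollary~\ref{flux_px1_zero}), and reduce everything to the flux through the inner null surface. (A minor point: in the paper's labeling it is $\partial\mathcal{S}_2$ that sits on the mantel and carries $\mathbf{e}-\mathbf{m}$, while $\partial\mathcal{S}_1$ is the inner piece carrying $\mathbf{e}+\mathbf{m}$; you have these reversed.)

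The gap is in how you propose to control the inner flux. You want to apply Stokes for $\mathbf{P}_{\mathbf{X_2}}$ and $\mathbf{P}_{\mathbf{X_3}}$ on $\mathcal{S}(\tau)$, take a linear combination with parameters $k_\ell,k_n$, and run a Gr\"onwall inequality for the \emph{integrated} flux $G(s)$ in the variable $t$. This does not close. The bulk terms produced by Stokes for $\mathbf{P}_{\mathbf{X_2}},\mathbf{P}_{\mathbf{X_3}}$ are spacetime integrals over $\mathcal{S}$ (for $\mathbf{X_3}$ it is $\int_{\mathcal{S}} e^{-2\Omega}u_t^2\,\bar{\mu}_g$), not expressible in terms of $G(s)$; no inequality of the form $-G'(s)\le \tfrac{c}{|s|}G(s)+\mathcal{E}(s)$ comes out. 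And the parameters $k_\ell,k_n$ are not coefficients of a linear combination of identities at all.

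The paper's argument (following Christodoulou--Tahvildar-Zadeh and Shatah--Tahvildar-Zadeh) is \emph{pointwise} along characteristics. One sets $\mathcal{A}^2:=r(\mathbf{e}-\mathbf{m})$, $\mathcal{B}^2:=r(\mathbf{e}+\mathbf{m})$ and adds/subtracts the $\mathbf{P}_{\mathbf{X_1}}$ and $\mathbf{P}_{\mathbf{X_2}}$ divergence identities \emph{pointwise} (not via Stokes on $\mathcal{S}$) to obtain coupled transport equations $\hat{\ell}^\mu\partial_\mu(\mathcal{A}^2)=-L$, $\hat{n}^\mu\partial_\mu(\mathcal{B}^2)=L$. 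The parameters $k_\ell,k_n$ enter as exponents in weights $e^{k_\ell\gamma},e^{k_n\gamma}$ on $\mathcal{A}^2,\mathcal{B}^2$; with $k_\ell=k_n=2$ the nonlinear part of $L$ acquires a definite sign, and after estimating $L_0^2\le c(\mathbf{e}^2-\mathbf{m}^2)$ one is left with the clean system
\[
\partial_\xi\widehat{\mathcal{A}}\le \frac{c\,\widehat{\mathcal{B}}}{r},\qquad \partial_\eta\widehat{\mathcal{B}}\le \frac{c\,\widehat{\mathcal{A}}}{r}.
\]
Substituting one into the other and introducing $\widehat{\mathcal{H}}(\xi,\eta):=\sup_{\xi\le\xi'\le 0}\sqrt{\xi'-\eta}\,\widehat{\mathcal{B}}(\xi',\eta)$ yields a genuine Gr\"onwall inequality \emph{in $\eta$} for $\widehat{\mathcal{H}}$, with the mantel flux as the forcing. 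This gives the pointwise decay $\widehat{\mathcal{B}}(\xi,\eta)\le\epsilon/\sqrt{\xi-\eta}$, which one then integrates in $\xi$ to bound the inner flux. The key idea you are missing is precisely this coupled characteristic system for $\mathcal{A},\mathcal{B}$ and the resulting pointwise control; an identity for the integrated flux alone does not substitute for it.
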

\begin{proof}
Consider a tubular region $\cal{S}$ with triangular cross section (as shown in the figure \ref{fig:annular_disc_1} ) in  $R > \lambda T, \, \lambda \in (0,1)$ 
of the spacetime i.e., the ``exterior'' part of the interior of the past null cone of $O$.
\begin{figure}[!hbt]
\psfrag{O}{$O$}
\psfrag{O}{$O$}
\psfrag{teqtau}{$t=\tau$}
\psfrag{teq-1}{$t=-1$}
\psfrag{S}{$\cal{S}$}
\psfrag{1}{$\ptl \cal{S}_1$}
\psfrag{2}{$\ptl \cal{S}_2$}
\psfrag{3}{$\ptl \cal{S}_3$}
\psfrag{Req0}{$R=0$}
\psfrag{ReqlT}{$R=\lambda T$}
\psfrag{ReqT}{$R=|T|$}

\centerline{\includegraphics[height=2.5in]{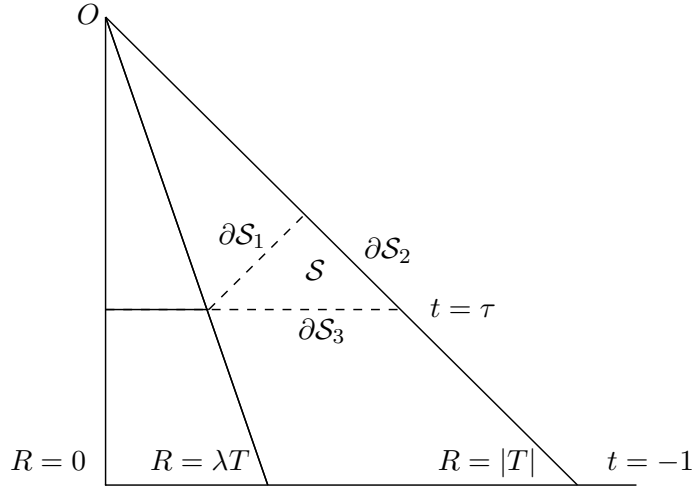}}

\caption{ Application of Stokes' theorem on the $\bar{\mu}_g$-divergence free $\mathbf{P}_{\mathbf{X_1}}$ to relate the fluxes through 
surfaces $\ptl \cal{S}_1$,\,$\ptl \cal{S}_2$\, and $\ptl \cal{S}_3$} 
\label{fig:annular_disc_1}
\end{figure} 

 As shown in the figure \ref{fig:annular_disc_1}, let us use the divergence-free vector field 
$\mathbf{P}_{\mathbf{X_1}}$  and the Stokes' theorem in the region $\cal{S}$ to relate the fluxes through the three boundary segments
$\ptl \cal{S}_1$,\,$\ptl \cal{S}_2$ and $\ptl \cal{S}_3$.
We have,
\begin{align}\label{div_tube}
 \int_\cal{S} \nabla_\nu  \mathbf{P}^\nu_\mathbf{X_1} = 0 =&  \int_{\ptl \cal{S}_1} d\,\eta(\mathbf{P}_\mathbf{X_1}) \bar{\mu}_\eta 
 + \int_{\ptl \cal{S}_2} d\,\xi(\mathbf{P}_\mathbf{X_1})\bar{\mu}_\xi
 - \int_{\ptl \cal{S}_3} e^{\Omega} \mathbf{P}^t_\mathbf{X_1} \, \bar{\mu}_q \notag \\
 =& - \halb\int_{\ptl \cal{S}_1} e^{-\mathcal{G}} (\mathbf{e+m})\,\bar{\mu}_\eta  
-\halb \int_{\ptl \cal{S}_2} e^{-\mathcal{F}} (\mathbf{e-m})\,\bar{\mu}_\xi + \int_{\ptl \cal{S}_3} \mathbf{e} \, \bar{\mu}_q .
\end{align}
To analyze the behaviour of the flux terms $\int_{\ptl \cal{S}_1}$ and $\int_{\ptl \cal{S}_2}$ in \eqref{div_tube} close to $O$,
let us define
\begin{align*}
 \hatt{l} &\fdg= e^{\gamma + \Omega}\tild{\ell} = e^{\gamma}\ptl_t + e^{\Omega}\ptl_r \\
\hatt{n} &\fdg= e^{\gamma + \Omega}\tild{n} = e^{\gamma}\ptl_t - e^{\Omega}\ptl_r \\
\mathcal{A}^2 &\fdg= r(\mathbf{e}-\mathbf{m}) \\
\mathcal{B}^2 &\fdg = r(\mathbf{e}+ \mathbf{m}). \\
\end{align*}
From \eqref{equiv_div_px1}, we have
\begin{align}\label{equiv_div_px1_new}
0= \grad_\nu \mathbf{P}_\mathbf{X_1}^\nu  = \frac{1}{re^{\gamma + \Omega}} \left(-\ptl_t (re^{\gamma}\mathbf{e}) 
+ \ptl_r (re^{\Omega}\mathbf{m}) \right).
\end{align}
Let us try to get another useful identity with the $\mathbf{X_2} = e^{-\gamma}\ptl_r$ multiplier. 
Recall the vector $\mathbf{P}_{\mathbf{X_2}} -e^{-\Omega} \mathbf{m} \, \ptl_t + e^{-\gamma} (\mathbf{e}-\mathbf{f}) \,\ptl_r  $
and the two equivalent expressions for its divergence in \eqref{div_px2} and \eqref{equiv_div_px2}

\begin{align}\label{equal_div_px2}
 \grad_\nu \mathbf{P}_\mathbf{X_2}^\nu &= \frac{1}{\sqrt {-g}} \ptl_\nu (\sqrt{-g}\, \mathbf{P}_\mathbf{X_2}^\nu) \notag\\
 & = \frac{1}{re^{\gamma + \Omega}} \left(-\ptl_t (re^{\gamma}\mathbf{m}) 
+ \ptl_r ((\mathbf{e}-\mathbf{f}) re^{\Omega}) \right) \notag\\
 & =  \halb \, \leftexp{(\mathbf{X_2})} {\mbo{\pi}}_{\alpha \beta} \mathbf{T}^{\alpha \beta} \notag \\
& = - e^{-\gamma} \Omega_r \,\mathbf{e} + \frac{1}{2r} e^{-\gamma} (e^{-2\Omega} u_t^2 - e^{-2\gamma}u_r^2 + \mathbf{f}) + e^{-\Omega} \gamma_t \mathbf{m}.
\end{align}
Therefore, we have the following identities from \eqref{equiv_div_px1_new} and \eqref{equal_div_px2}
\begin{subequations} \label{px1px2_identities}
\begin{align}
 \ptl_t(re^{\gamma}\mathbf{e})- \ptl_r(re^\Omega \mathbf{m}) & = 0 \\
\ptl_t(r e^{\gamma}\mathbf{m}) - \ptl_r({re^{\Omega} \mathbf{e}}) &= L 
\end{align}
\end{subequations}
where 
\[L \fdg = \frac{re^{\Omega}\Omega_r}{2} \left((\mathbf{X_1}u)^2 + (\mathbf{X_2}u)^2 -\mathbf{f} \right) + e^{\Omega} L_0 - r\gamma_t e^{\gamma} \mathbf{m}  \]
for
\[ L_0 \fdg = \halb \left(-(\mathbf{X_1}u)^2 + (\mathbf{X_2}u)^2 + \mathbf{f} \right) -\frac{2 f(u)f_u(u) u_r}{r} .\]
Furthermore, we can construct the following using the identities in \eqref{px1px2_identities}
\begin{subequations}\label{px1px2_newidentities}
\begin{align}
 \ptl_{\alpha}\left(re^{\gamma + \Omega}(\mathbf{e}-\mathbf{m})\tild{\ell} ^\alpha \right) &= \ptl_\alpha(\mathcal{A}^2 \hatt{\ell}^{\alpha}) = -L \\
\ptl_{\alpha}\left(re^{\gamma + \Omega}(\mathbf{e}+\mathbf{m})\tild{n}^\alpha \right) &= \ptl_\alpha(\mathcal{B}^2 \hatt{n}^{\alpha}) = L. 
\end{align}
\end{subequations}
Let us try to express $L$ in terms of $\cal{A}^2$ $\cal{B}^2$ after using the Einstein's equations
\begin{align}\label{simplifyL}
 L &= e^{\Omega}L_0 + \mbo{\alpha} r^2 e^{2\gamma + \Omega} \left( \mathbf{e}-\mathbf{f} \right)^2 - \alpha r^2 e^{2\gamma + \Omega} \mathbf{m}^2 \notag\\
&= e^{\Omega}L_0 + \mbo{\alpha} r^2 e^{2\gamma + \Omega} \left( \mathbf{e}^2 -2 \,\mathbf{e} \,\mathbf{f} + \mathbf{f}^2 -\mathbf{m}^2 \right) \notag\\
&= e^{\Omega}L_0 + \mbo{\alpha} e^{2\gamma + \Omega} \left( \mathcal{A}^2 \, \mathcal{B}^2 -2\, r^2 \,\mathbf{e}\,\mathbf{f} + r^2 \mathbf{f}^2 \right). 
\end{align}
We would like to set up a Gr\"onwall estimate for $\cal{B}$ using the identities in \eqref{px1px2_newidentities}. However, the quantity $L$ as shown
in \eqref{simplifyL} has nonlinear terms involving $\mathbf{e}$ and $\mathbf{f}$. Therefore, in what follows we introduce the parameters $k_\ell$ and $k_n$,
and use Einstein's equations to estimate these terms. \\

Firstly note that
\begin{align*}
  \hatt{\ell}^\mu \ptl_\mu e^{k_{\ell} \gamma} =& k_{\ell} e^{k_{\ell} \gamma} (e^{\gamma}\gamma_t + e^{\Omega}\gamma_r)  &
\hatt{n}^\mu \ptl_\mu e^{k_n \gamma} =& k_n e^{k_n \gamma} (e^{\gamma}\gamma_t - e^{\Omega}\gamma_r) \\ 
=& k_{\ell} e^{k_{\ell} \gamma} \alpha r e^{2\gamma + \Omega}(\mathbf{m} + \mathbf{e})  &
 =& k_n e^{k_n \gamma} \alpha r e^{2\gamma + \Omega}(\mathbf{m} - \mathbf{e})  \\
=& k_{\ell} \alpha e^{k_{\ell} \gamma}e^{2\gamma + \Omega} \mathcal{B}^2 &
=& -k_n \alpha e^{k_n \gamma}e^{2\gamma + \Omega} \mathcal{A}^2 \\
\end{align*}
and
\begin{align*}
\ptl_\mu \hatt{\ell}^\mu =& e^{\gamma} \gamma_t + e^{\Omega} \Omega_r & 
\ptl_\mu \hatt{n}^\mu =& e^{\gamma} \gamma_t - e^{\Omega} \Omega_r \\
=& r \mbo{\alpha} e^{2\gamma + \Omega} \left(\mathbf{e} + \mathbf{m} - \mathbf{f} \right) &
=& r \mbo{\alpha} e^{2\gamma + \Omega} \left(-\mathbf{e} + \mathbf{m} + \mathbf{f} \right)\\
=& \mbo{\alpha} e^{2\gamma + \Omega} \left(\mathcal{B}^2 -r \,\mathbf{f} \right) &
=&  \mbo{\alpha} e^{2\gamma + \Omega} \left(-\mathcal{A}^2 + r \, \mathbf{f} \right) . \\
\end{align*}
Now consider the quantities $ \ptl_{\mu} (e^{k_{\ell}\gamma} \mathcal{A}^2 \hatt{\ell}^\mu)$ and $ \ptl_{\mu} (e^{k_n\gamma} \mathcal{B}^2 \hatt{n}^\mu)$,
\begin{align*}
\hatt{\ell}^\mu \ptl_\mu (e^{k_{\ell}\gamma}\mathcal{A}^2) &= \ptl_{\mu} (e^{k_{\ell}\gamma} \mathcal{A}^2 \hatt{\ell}^\mu) - e^{k_{\ell}\gamma}\mathcal{A}^2 \ptl_\mu \hatt{\ell}^\mu \\
& = e^{k_{\ell} \gamma} \ptl_\mu (\mathcal{A}^2 \hatt{\ell}^\mu) + \mathcal{A}^2 \hatt{\ell}^\mu \ptl_\mu e^{k_{\ell}\gamma} -\mbo{\alpha} e^{k_{\ell}\gamma}e^{2\gamma + \Omega}\mathcal{A}^2 \mathcal{B}^2  
+ r \mbo{\alpha} e^{k_{\ell}\gamma}e^{2\gamma + \Omega}\mathcal{A}^2  \, \mathbf{f} \\
& = - e^{k_{\ell} \gamma} L + (k_{\ell} -1) \alpha e^{k_{\ell}\gamma}e^{2\gamma + \Omega}\mathcal{A}^2 \mathcal{B}^2 
+ r \mbo{\alpha} e^{k_l\gamma}e^{2\gamma + \Omega}\mathcal{A}^2 \, \mathbf{f} \\
&= e^{k_{\ell} \gamma}e^{\Omega}\left( -L_0 + \mbo{\alpha} e^{2\gamma}(k_{\ell}-2) \mathcal{A}^2 \mathcal{B}^2 + 2\,r^2\, \mathbf{e}\,\mathbf{f}- r^2\mathbf{f}^2 
+ r \mathcal{A}^2 \mathbf{f}  \right) \\
& = e^{k_{\ell} \gamma}e^{\Omega}\left( -L_0 + \mbo{\alpha} r^2 e^{2\gamma}\left( (k_{\ell}-2)(\mathbf{e}^2 - \mathbf{m}^2) + 3 \mathbf{e}\,\mathbf{f}  - \mathbf{f}^2
 - \mathbf{m}\, \mathbf{f} \right) \right) \\
\end{align*}
\begin{align*}
\hatt{n}^\mu \ptl_\mu (e^{k_n\gamma}\mathcal{B}^2) &= \ptl_{\mu} (e^{k_n\gamma} \mathcal{B}^2 \hatt{n}^\mu) - e^{k_n\gamma}\mathcal{B}^2 \ptl_\mu \hatt{n}^\mu \\
& = e^{k_n \gamma} \ptl_\mu (\mathcal{B}^2 \hat{n}^\mu) + \mathcal{B}^2 \hat{n}^\mu \ptl_\mu e^{k_n\gamma} + \alpha e^{k_n\gamma}e^{2\gamma + \Omega}\mathcal{A}^2 \mathcal{B}^2  
- r \alpha e^{k_n\gamma}e^{2\gamma + \Omega}\mathcal{B}^2 \frac{f^2(u)}{r^2} \\
& = e^{k_n \gamma} L +  (-k_n + 1) \alpha e^{k_n\gamma}e^{2\gamma + \Omega}\mathcal{A}^2 \mathcal{B}^2 
- r \alpha e^{k_n\gamma}e^{2\gamma + \Omega}\mathcal{B}^2 \, \mathbf{f} \\
&= e^{k_n \gamma}e^{\Omega}\left( L_0 + \alpha e^{2\gamma}\left((-k_n+2) \mathcal{A}^2 \mathcal{B}^2 - 2r^2 \mathbf{f}\mathbf{e} + r^2\mathbf{f}^2 
- r \mathcal{B}^2 \mathbf{f} \right) \right) \\ 
& = e^{k_n \gamma} e^{\Omega} \left(L_0 + \alpha r^2 e^{2\gamma}\left( (-k_n+2)(\mathbf{e}^2 - \mathbf{m}^2) - 3 \mathbf{e}\,\mathbf{f} + \mathbf{f}^2 
+ \mathbf{m}\,\mathbf{f}  \right) \right). \\
\end{align*}
Let us define
\begin{align*}
 S_{k_\ell} \fdg &= (k_{\ell}-2 )(\mathbf{e}^2 - \mathbf{m}^2) + 3 \mathbf{e} \, \mathbf{f} - \mathbf{f}^2 - \mathbf{m}\, \mathbf{f}  \\
& = (k_{\ell}-2 )(\mathbf{e}^2 - \mathbf{m}^2)  + (\mathbf{e} - \mathbf{m}) \, \mathbf{f} + \mathbf{e_0} \, \mathbf{f}   \\
& \geq 0
\end{align*}
for $ k_{\ell} \geq 2$. Note that we have $\mathbf{e} \geq |\mathbf{m}|.$
Similarly define
\begin{align*}
 S_{k_n} \fdg &= (-k_n + 2)(\mathbf{e}^2 - \mathbf{m}^2) - 3 \mathbf{e} \, \mathbf{f} + \mathbf{f}^2 + \mathbf{m}\, \mathbf{f}  \\
& = (-k_n + 2) (\mathbf{e}^2 - \mathbf{m}^2) - (\mathbf{e} - \mathbf{m}) \, \mathbf{f} - \mathbf{e_0} \, \mathbf{f} \\
& \leq 0
\end{align*}
for $k_n \geq 2$.
Hence, for the choice of $ k_{\ell} =2= k_n = 2$, let us now introduce the quantities $\widehat{\cal{A}}$ and $\widehat{\cal{B}}$
such that 
\[  \widehat{\cal{A}} \fdg = e^{\gamma} \cal{A} \]
and 
\[ \widehat{\cal{B}} \fdg = e^{\gamma} \cal{B}. \]
In the following we will try to estimate $L_0^2$ by $\mathbf{e}^2 - \mathbf{m}^2$.
We will use the following identities which are valid for all real $a,b,c$
\begin{align*}
  (a + b + c) ^2 & = 3(a^2 + b^2 + c^2)- \left( (a-b)^2 + (b-c)^2 + (c-a)^2 \right) \\
& \leq 3(a^2 + b^2 + c^2). \\
\frac{1}{4}(-a^2 + b^2)^2 &= \frac{1}{4}(a^2 + b^2)^2 -a^2 b^2 .\\
\end{align*}
So consider,
\begin{align*}
 L_0^2 & \leq 3\left( \frac{1}{4}\left(-(\mathbf{X_1}u)^2 + (\mathbf{X_2}u)^2\right)^2 +  4  f_u^2(u)u^2_r\, \mathbf{f} + \frac{1}{4}\,\mathbf{f}^2 \right) \\
&=  3 \left( \frac{1}{4} \mathbf{e_0}^2 + 4  f_u^2(u)u^2_r\, \mathbf{f} + \frac{1}{4}\,\mathbf{f}^2 -\mathbf{m}^2 \right) \\
& \leq 3\left(\frac{1}{4} \mathbf{e_0}^2 + \frac{c}{2} (\mathbf{X_2}u)^2 \, \mathbf{f} + \frac{1}{4}\,\mathbf{f}^2 -\mathbf{m}^2  \right) \\
& \leq 3 \left(\frac{1}{4} \mathbf{e_0}^2 + \frac{c}{2} (\mathbf{X_2}u)^2 \, \mathbf{f} + \frac{c}{2} (\mathbf{X_1}u)^2 \, \mathbf{f} + \frac{1}{4}\,\mathbf{f}^2 -\mathbf{m}^2  \right)  \\
& \leq c\left(\frac{1}{4} \mathbf{e_0}^2 + \frac{1}{2} \, \mathbf{e_0}\, \mathbf{f} + \frac{1}{4}\,\mathbf{f}^2 -\mathbf{m}^2  \right)  \\
& = c(\mathbf{e}^2 -\mathbf{m}^2 ) \\
\end{align*}
where we have used the fact that both $||u||_{L^\infty}$ and $||\gamma||_{L^\infty} \leq c$. 
Furthermore we have,
\begin{align*}
L_0^2 \leq c \frac{\widehat{\cal{A}}^2\,\widehat{\cal{B}}^2} {r^2}
\end{align*}
consequently, 
\begin{align*}
 \ptl_{\xi} \widehat{\cal{A}}^2 =& e^{\gamma + \cal{F}} \left(-L_0 + \mbo{\alpha}\, r^2\, e^{2\gamma}S_2\right)  \\
\leq  & (-L_0).
\end{align*}
So,
\begin{align*}
\widehat{\cal{A}}\, \ptl_\xi \widehat{\cal{A}} \leq -c L_0 \leq c|L_0| \leq c \frac{\widehat{\cal{A}}\,\widehat{\cal{B}}}{r}
\end{align*}
that gives us
\[  \ptl_\xi \widehat{\cal{A}} \leq c \frac{\widehat{\cal{B}}}{ r}  \]
and similarly,
\[  \ptl_\eta \widehat{\cal{B}} \leq c \frac{\widehat{\cal{A}}} { r}. \]
The rest of the proof is comparable to the case of wave maps on the Minkowski background as in \cite{jal_tah1} and \cite{chris_tah1}.
Consider the region of spacetime $[\xi , 0] \times [\eta_0, \eta]$ where $\xi,  \eta \leq 0$. The integral curve of the vector field 
$\mathbf{X_1}$ passing through $O$ is the axis $r=0$ of $M$.
\begin{figure}[!hbt]
\psfrag{O}{$O$}
\psfrag{etaeqeta0}{$\eta_0$}
\psfrag{etaeqeta}{$\eta$}
\psfrag{xieqxi}{$\xi= \xi$}
\psfrag{Req0}{$R=0$}
\psfrag{ReqlT}{$R=\lambda T$}
\psfrag{ReqT}{$R=|T|$}
\psfrag{reqReq0}{$r=R=0$}
\psfrag{teq-1}{$t=-1$}
\centerline{\includegraphics[height=2.5in]{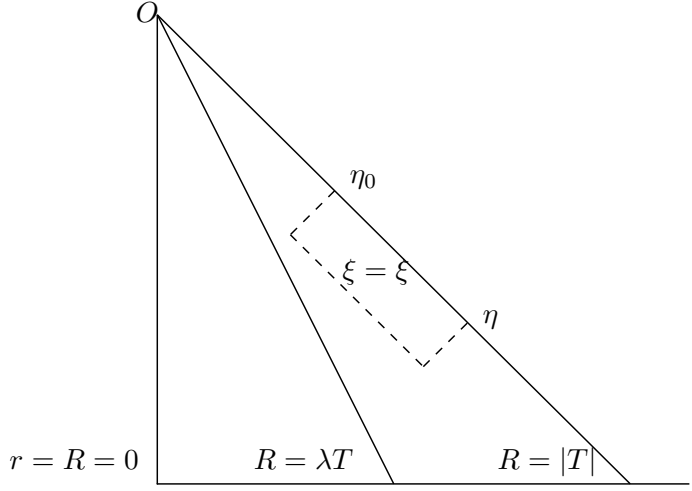}}

\caption{Application of the fundamental theorem of calculus for $\widehat{A}$ and $\widehat{B}$ in the region
$[\xi , 0] \times [\eta_0, \eta]$ } 
\label{fig:annular_disc_2}
\end{figure}

Using the fundamental theorem of calculus,
\[ \widehat{\cal{A}}(0,\eta) - \widehat{\cal{A}}(\xi,\eta) = \int^0_\xi \ptl_{\xi'} \widehat{\cal{A}}(\xi',\eta)\,\, d\, \xi' \]
\[ \widehat{\cal{B}}(\xi,\eta) - \widehat{\cal{B}}(\xi,\eta_0) = \int^\eta_{\eta_0} \ptl_{\eta'} \widehat{\cal{B}}(\xi,\eta')\,\, d\, \eta'. \]
So,
\begin{align}
 \widehat{\cal{B}}(\xi,\eta) &= \widehat{\cal{B}}(\xi,\eta_0) + \int^\eta_{\eta_0} \ptl_{\eta'} \widehat{\cal{B}}(\xi,\eta')\,\, d\, \eta' \notag \\
&\leq \widehat{\cal{B}}(\xi,\eta_0) + c\int^\eta_{\eta_0} \frac{\widehat{\cal{A}}(\xi,\eta')}{r(\xi,\eta')}\,\, d\, \eta' \label{ftc_B} \\
\widehat{\cal{A}}(\xi,\eta') &= \widehat{\cal{B}}( 0,\eta') - \int^0 _{\xi} \ptl_{\xi'} \widehat{\cal{A}}(\xi',\eta')\,\, d\, \xi' \notag  \\
&\leq \widehat{\cal{A}}(0,\eta') + c\int^0_{\xi} \frac{\widehat{\cal{B}}(\xi',\eta')}{r(\xi',\eta')}\,\, d\, \xi' \label{ftc_A}.
\end{align}
After plugging in $ \widehat{\cal{A}}(\xi, \eta')$ in \eqref{ftc_B} we get,
\begin{align} \label{original_B}
  \widehat{\cal{B}}(\xi,\eta) & \leq \widehat{\cal{B}}(\xi,\eta_0) + c \left(\int^\eta_{\eta_0} \frac{\widehat{\cal{A}}(0,\eta')}{r(\xi, \eta')}\,\, d\, \eta'
 + c \int^\eta_{\eta_0}\frac{1}{r(\xi,\eta')} \left(\int^0_\xi \frac{\widehat{\cal{B}}(\xi',\eta')}{r(\xi',\eta')}\,\, d\, \xi'\right) \,\,d\,\eta' \right) \notag\\
&= \widehat{\cal{B}}(\xi,\eta_0) + c \left(\int^\eta_{\eta_0} \frac{\widehat{\cal{A}}(0,\eta')}{r(\xi, \eta')}\,\, d\, \eta' \right) 
 + c \left( \int^\eta_{\eta_0}\int^0_\xi \frac{\widehat{\cal{B}}(\xi',\eta')}{ r(\xi, \eta') r(\xi',\eta')}\,\, d\, \xi' \,\,d\,\eta' \right).
\end{align}
Now consider the second term in the right hand side of \eqref{original_B}, firstly recall
\[ r(\xi,\eta')  \geq c R(\xi,\eta') = c \, \halb (\xi-\eta'), \]
\begin{align} \label{eq_before_pxi}
\int^\eta_{\eta_0} \frac{\widehat{\cal{A}}(0,\eta')}{r(\xi, \eta')}\,\, d\, \eta' & \leq  \left(\int^\eta_{\eta_0} \widehat{\cal{A}}^2(0,\eta')\,\,d\,\eta' \right)^{\halb} 
\left(\int^\eta_{\eta_0} \frac{1}{(\xi-\eta')^2}\,\,d\,\eta' \right)^{\halb} \notag \\
& \leq c \, \text{Flux}^{\halb} (\mathbf{P}_{\mathbf{X_1}}) (\eta_0,\eta) \left(\frac{1}{\xi-\eta} -  \frac{1}{\xi-\eta_0}\right)^{\halb} \notag \\
&\leq c\, \text{Flux}^{\halb} (\mathbf{P}_{\mathbf{X_1}}) (\eta_0)\left(\frac{1}{\xi-\eta}\right)^{\halb}.
\end{align}
Let us define the function $\widehat{\cal{H}}(\xi,\eta) \fdg= \sup_{\xi\leq \xi' \leq 0}\sqrt{\xi'-\eta}\,\, \widehat{\cal{B}}(\xi',\eta).$ 
Note that we are working in the region where $(\xi',\eta') \in [\xi,0] \times [\eta_0, \eta]$ such that $\xi' \neq \eta'$

\begin{align}
  \widehat{\cal{B}}(\xi,\eta) &\leq \widehat{\cal{B}}(\xi,\eta_0) + c\,\frac{\text{Flux}^\halb(\eta_0)}{(\xi-\eta)^\halb} 
+ c  \left( \int^\eta_{\eta_0}\int^0_\xi \frac{\widehat{\cal{B}}(\xi',\eta')}{ r(\xi, \eta') r(\xi',\eta')}\,\, d\, \xi' \,\,d\,\eta' \right)  .
\end{align}
We have,
\[ \sqrt {\xi' -\eta'}\,\widehat{\cal{B}}(\xi',\eta') \leq \sup_{\xi \leq \xi' \leq \eta} \sqrt{\xi'-\eta'}\,\widehat{\cal{B}}(\xi',\eta')  = \widehat{\cal{H}}(\xi,\eta'). \]
So,
\begin{align} \label{Hestimate}
 (\xi-\eta)^\halb \widehat{\cal{B}}(\xi,\eta) &\leq \left(\frac{\xi-\eta}{\xi-\eta_0}\right) ^\halb (\xi-\eta_0)^\halb \widehat{\cal{B}}(\xi,\eta_0) + c\text{Flux}^\halb(\eta_0) \notag \\
&\quad + c \left( \int^\eta_{\eta_0}\int^0_\xi  \widehat{\cal{H}}(\xi,\eta') \, \frac{(\xi-\eta)^\halb}{ (\xi- \eta') (\xi' -\eta')^{3/2}}\,\, d\, \xi' \,\,d\,\eta' \right). \notag \\
\end{align}
Now consider the function $p(\xi)$ defined as follows
\[ p \fdg = \frac{\xi-\eta}{\xi-\eta_0},\] 
we have $\xi-\eta \leq \xi-\eta_0$ so $ p \leq 1 .$
Differentiating $p(\xi)$ with respect to $\xi$, we get
\begin{align*}
p_\xi (\xi) =& \frac{(\xi-\eta_0) (-\eta)-(\xi-\eta)(-\eta_0)}{(\xi-\eta_0)^2} \\
=& \frac{\xi(\eta-\eta_0)}{(\xi-\eta_0)^2} \\
\leq & \, 0.
\end{align*}
Therefore we have,
\begin{align} \label{pxi}
 p(0) \leq p(\xi).
\end{align}
Let us go back to the inequality \eqref{Hestimate} and use \eqref{pxi}, we have
\begin{align}
(\xi-\eta)^\halb \widehat{\cal{B}}(\xi,\eta) & \leq \left(\frac{-\eta}{-\eta_0}\right) ^\halb (\xi-\eta_0)^\halb \widehat{\cal{B}}(\xi,\eta_0)+ c\text{Flux}^\halb(\eta_0)  \notag \\
& \quad + c \left( \int^\eta_{\eta_0}\widehat{\cal{H}}(\xi,\eta') \, \frac{(\xi-\eta)^\halb}{ (\xi- \eta')}\,\left( \frac{1}{\sqrt{\xi-\eta'}}-\frac{1}{\sqrt{-\eta'}} \right)\,d\,\eta' \right) .
\end{align}
Consequently,
\begin{align}
\widehat{\cal{H}}(\xi,\eta) &\leq \left(\frac{-\eta}{-\eta_0}\right)^\halb \widehat{\cal{H}}(\xi,\eta_0) + c \text{Flux}^\halb(\eta_0) 
+ c \int^{\eta}_{\eta_0} \widehat{\cal{H}}(\xi,\eta') \frac{\xi}{\eta' (\xi-\eta')} \, d\, \eta'
\end{align}
\begin{align}
 \widehat{\cal{H}}(\xi,\eta_0) & = \sup_{\xi\leq \xi' \leq 0}\sqrt{\xi'-\eta_0}\,\, \widehat{\cal{B}}(\xi',\eta_0) \notag \\
& \leq \sup_{\xi\leq \xi' \leq 0}\sqrt{\xi'-\eta_0} \,\, \sup_{\xi\leq \xi' \leq 0} \widehat{\cal{B}}(\xi',\eta_0) \notag \\
& \leq c(\eta_0) \sqrt{-\eta_0} 
\end{align}
where we have used the fact that $u$ is regular away from the axis so that $\widehat{\cal{B}}(\xi,\eta_0)$ is finite. So,
\begin{align} \label{before_gronwall}
\widehat{\cal{H}}(\xi,\eta) &\leq c(\eta_0) \sqrt{-\eta} + c \text{Flux}^\halb(\eta_0) 
+ c \int^{\eta}_{\eta_0} \widehat{\cal{H}}(\xi,\eta') \frac{\xi}{\eta' (\xi-\eta')} \, d\, \eta'.
\end{align}
Let us now use the Gronwall's lemma to convert the implicit estimate in \eqref{before_gronwall} to an explicit one, 
for $\eta \in (\eta_0, \frac{\xi}{\lambda'})$
where $ \lambda' \fdg = \frac{1-\lambda}{1+ \lambda} < 1 $
\begin{align}
 \widehat{\cal{H}}(\xi,\eta) & \leq  \sqrt{-\eta} c(\eta_0) + c \, \text{Flux}^\halb(\eta_0) \notag \\
& \quad + c \int^{\eta}_{\eta_0} \left(  \sqrt{-\eta} c(\eta_0) + c \, 
\text{Flux}^\halb(\eta_0)\right) \left( \frac{\xi}{\eta'(\xi-\eta')} \right) e^{\int^\eta_{\eta'}\frac{\xi}{\eta'' (\xi -\eta'')} \, d\, \eta''} \, d\,\eta'.
\end{align}
We have for $\eta_0 \leq \eta' \leq \eta $ and setting $\xi = \lambda' \eta$,
\begin{align*}
 \int^\eta_{\eta'} \frac{\xi}{\eta'' (\xi -\eta'')} d\, \eta'' = & \log \frac{\eta(\lambda' \eta -\eta')}{\eta' (\lambda' \eta -\eta)} \\
\leq & \log \frac{1}{1-\lambda'}.
\end{align*}
For any $\epsilon > 0$ we can choose an $\eta_0$ small enough such that $c\text{Flux}^\halb(\eta_0)< \frac{\epsilon}{2}$. Furthermore
one can choose $\eta \in (\eta_0, 0)$ small enough such that   $c(\eta_0)\sqrt{-\eta} < \frac{\epsilon}{2}$. \\
So we have $\widehat{\cal{H}}(\xi,\eta) < \epsilon$ for $\eta_0 <\eta<0$ small enough.
Then, $\widehat{\cal{B}}(\xi,\eta) \leq \frac{\widehat{\cal{H}}(\xi,\eta)}{\sqrt{\xi-\eta}} \leq \frac{\epsilon}{\sqrt{\xi-\eta}}.$
Now going back to the flux integrals $\int_{\ptl \cal{S}_1} e^{-\cal{G}}(\mathbf{e+m}) \bar{\mu}_\eta $ and $\int_{\ptl \cal{S}_2} e^{-\cal{F}}(\mathbf{e+m}) \bar{\mu}_\xi $
in \eqref{div_tube}, we have
\begin{align}
 \int_{\ptl \cal{S}_1} e^{-\cal{G}}(\mathbf{e+m}) \bar{\mu}_\eta &\leq c \int^0_\xi \widehat{\cal{B}}(\xi',\eta) \,d\,\xi' \notag \\
  &\leq \epsilon \int^0_\xi \frac{1}{(\xi'-\eta)}\,\, d\, \xi' \notag \\
 & =  \eps  \int^0_{\lambda' \eta} \frac{1}{(\xi'-\eta)}\,\, d\, \xi' \notag s\\
& = \eps \log\left( \frac{-\eta}{(\lambda'  -1)\eta} \right) \notag \\
& = \eps \log \frac{1}{\lambda' -1} \notag \\
& < c \eps
\end{align}
and 
\begin{align}
 \halb \int_{\ptl \cal{S}_2} r e^{2z-\mathcal{F}} (\mathbf{e-m})\,\,  d\, \eta \wedge d\, \theta &= \text{Flux} (\mathbf{P_X})(\eta_0,\eta) \notag \\
& < \eps
\end{align}
for $\eta_0, \eta$ small enough.
Finally, since $\int_{\ptl \cal{S}_1}$ and $\int_{{\ptl \cal{S}_1}} \to 0$ in \eqref{div_tube}
we conclude that $ E^O_{\text{ext}} (\tau) \to 0$ as $\tau \to 0.$
\end{proof}
\begin{lem}[Non-concentration of integrated kinetic energy]\label{kin}
Let the kinetic energy be defined as 
\[\mathbf{e}_{\text{kin}} \fdg = \halb e^{-2\Omega} u_t^2 \]
then the spacetime integral of $\mathbf{e}_{\text{kin}}$ does not concentrate in the past null cone
of $O$, i.e.,
\begin{align*}
\frac{1}{r_2(\tau)}\int_{K_\tau}  \mathbf{e}_{\text{kin}} \,\, \bar{\mu}_g  \to 0 \,\, \text{as} \,\, \tau \to 0
\end{align*}
where $r_2(\tau)$ is the radial function defined as in Lemma \ref{ann}.
\end{lem}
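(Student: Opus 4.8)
The plan is to use the vector fields method of Section~\ref{vfm} with the Morawetz multiplier $\mathbf{X_3}\fdg= r\,\ptl_r$, i.e.\ the choice $k=0$ in \eqref{intro_px3}, whose associated momentum $\mathbf{P}_{\mathbf{X_3}}$ has nonnegative $\bar{\mu}_g$-divergence $\grad_\nu\mathbf{P}^\nu_{\mathbf{X_3}}=e^{-2\Omega}u_t^2=2\,\mathbf{e}_{\text{kin}}$. First I would apply the Stokes' theorem \eqref{stokes_gen} to $\mathbf{P}_{\mathbf{X_3}}$ in the truncated past null cone $K(\tau,s)$ and let $s\to 0^-$; since the bulk integrand is nonnegative, monotone convergence gives
\[
\int_{K_\tau} 2\,\mathbf{e}_{\text{kin}}\,\bar{\mu}_g \;=\; \lim_{s\to 0^-}\int_{\Sigma^O_s} e^{\Omega}\mathbf{P}^t_{\mathbf{X_3}}\,\bar{\mu}_q \;-\;\int_{\Sigma^O_\tau} e^{\Omega}\mathbf{P}^t_{\mathbf{X_3}}\,\bar{\mu}_q \;+\; \text{Flux}(\mathbf{P}_{\mathbf{X_3}})(\tau).
\]
This reduces the claim to showing that, after division by $r_2(\tau)$, each of the three terms on the right is small.

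The top term over $\Sigma^O_s$ and the flux term are the easy ones. On $\Sigma^O_s$ one has $e^{\Omega}\mathbf{P}^t_{\mathbf{X_3}}\,\bar{\mu}_q=-r^2e^{2\gamma}\mathbf{m}\,dr\wedge d\theta$, which carries a factor $r\le r_2(s)$; together with $|\mathbf{m}|\le\mathbf{e}$, the uniform bounds of Lemma~\ref{metric_uniform} and conservation of $E_0$ this gives $\big|\int_{\Sigma^O_s}e^{\Omega}\mathbf{P}^t_{\mathbf{X_3}}\,\bar{\mu}_q\big|\le c\,r_2(s)\,E_0\to 0$. On the mantle $C(\tau)$ a direct computation gives $\tild{n}(\mathbf{P}_{\mathbf{X_3}})=-r\,e^{\gamma}(\mathbf{e}-\mathbf{m}-\mathbf{f})$, and the algebraic identity $\mathbf{e}-\mathbf{m}=\halb\big(\mathbf{X_1}(u)-\mathbf{X_2}(u)\big)^2+\halb\mathbf{f}$ yields $|\mathbf{e}-\mathbf{m}-\mathbf{f}|\le\mathbf{e}-\mathbf{m}$; since $r\le r_2(\tau)$ on $C(\tau)$ and $e^{\gamma}$ is bounded, $\tfrac{1}{r_2(\tau)}\,|\text{Flux}(\mathbf{P}_{\mathbf{X_3}})(\tau)|\le c\,|\text{Flux}(\mathbf{P}_{\mathbf{X_1}})(\tau)|$, which tends to $0$ by Corollary~\ref{flux_px1_zero}.

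The hard part is the $\Sigma^O_\tau$ boundary term: the naive bound $\big|\int_{\Sigma^O_\tau}e^{\Omega}\mathbf{P}^t_{\mathbf{X_3}}\,\bar{\mu}_q\big|\le c\,r_2(\tau)\,E^O(\tau)\le c\,r_2(\tau)\,E_0$ only controls it by something of size $r_2(\tau)$, which after division by $r_2(\tau)$ need not vanish. I would split $\Sigma^O_\tau$ into the near-axis piece $\{r\le r_1(\tau)\}$ and the annular piece $\{r_1(\tau)\le r\le r_2(\tau)\}$, with $r_1(\tau),r_2(\tau)$ as in Lemma~\ref{ann}. On the annular piece, estimating $r\le r_2(\tau)$ and $|\mathbf{m}|\le\mathbf{e}$ bounds its contribution to $\tfrac{1}{r_2(\tau)}\int_{\Sigma^O_\tau}(\cdot)$ by $c\,E^{O}_{\text{ext}}(\tau)$, which tends to $0$ by Lemma~\ref{ann}. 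On the near-axis piece, estimating instead $r\le r_1(\tau)$ gives a bound $c\,\tfrac{r_1(\tau)}{r_2(\tau)}\,E^O(\tau)$; the pointwise comparisons between $r,R$ and $t,T$ established above, together with the definitions of $r_1,r_2$ as intersections of $R=\lambda|T|$ and $R=|T|$ with $t=\tau$, give $r_1(\tau)/r_2(\tau)\le c\,\lambda$, so this piece is at most $c\,\lambda\,E_0$. Finally, given $\eps>0$, one first fixes $\lambda$ small enough that $c\,\lambda\,E_0<\eps/3$ and then, for that $\lambda$, chooses $\tau$ close enough to $0$ that $c\,E^{O}_{\text{ext}}(\tau)<\eps/3$ (Lemma~\ref{ann}) and $c\,|\text{Flux}(\mathbf{P}_{\mathbf{X_1}})(\tau)|<\eps/3$ (Corollary~\ref{flux_px1_zero}); combined with the vanishing of the $\Sigma^O_s$ term this yields $\tfrac{1}{r_2(\tau)}\int_{K_\tau}\mathbf{e}_{\text{kin}}\,\bar{\mu}_g\to 0$. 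The only genuine obstacle is this interplay between the $r$-weight gained near the axis and the already-established Lemma~\ref{ann} away from it; the rest is a routine application of Stokes' theorem and the uniform bounds of Lemmas~\ref{metric_uniform} and~\ref{null_uniform}.
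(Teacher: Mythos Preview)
Your proposal is correct and follows essentially the same approach as the paper: apply Stokes' theorem \eqref{stokes_gen} to $\mathbf{P}_{\mathbf{X_3}}$ (with $k=0$) on $K(\tau,s)$, let $s\to 0$, bound the flux through $C(\tau)$ by $c\,r_2(\tau)\,|\text{Flux}(\mathbf{P}_{\mathbf{X_1}})(\tau)|$, and split the $\Sigma^O_\tau$ boundary term into the near-axis piece (controlled by $c\,\lambda\,E_0$ via $r_1(\tau)/r_2(\tau)\le c\,\lambda$) and the annular piece (controlled by $c\,E^O_{\text{ext}}(\tau)$ via Lemma~\ref{ann}), then conclude by the $\eps/3$ argument. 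Your treatment is in fact slightly more explicit than the paper's in two places: you justify $|\mathbf{e}-\mathbf{m}-\mathbf{f}|\le\mathbf{e}-\mathbf{m}$ via the algebraic identity, and you spell out why $r_1(\tau)/r_2(\tau)\le c\,\lambda$ follows from the pointwise $r\leftrightarrow R$ comparisons.
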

\begin{proof}
Consider the vector field $\mathbf{P_{X_3}}$ and its divergence, 
\begin{align*}
\mathbf{P_{X_3}} =& e^{(1-k)\gamma} \left( -r\,\mathbf{m}\,\mathbf{X_1} + r (\mathbf{e}-\mathbf{f}) \mathbf{X_2}\right) \\
=& -re^{(1-k)\gamma - \Omega} \,\mathbf{m}\, \ptl_t + re^{-k\gamma}(\mathbf{e} -\mathbf{f})\ptl_r \\
\nabla_\nu  \mathbf{P}^\nu_{\mathbf{X_3}} =&\, \halb \leftexp{(\mathbf{X_3})} {\mbo{\pi}}_{\alpha \beta} \mathbf{T}^{\alpha \beta} \\
= & \,e^{- 2 \Omega} u_t^2
\end{align*}
Using the Stokes theorem as in \eqref{stokes_gen} for $\mathbf{P}_{\mathbf{X_3}}$

\begin{align}\label{stokes_px3}
 \int_{K(\tau,s)} \nabla_\nu  \mathbf{P}^\nu_\mathbf{X_3}\, \bar{\mu}_g  & = \int_{\Sigma^O_{s}} e^{\Omega}  \mathbf{P}^t_\mathbf{X_3} 
\, \bar{\mu}_q
- \int_{\Sigma^O_{\tau}}e^{\Omega} \mathbf{P}^t_\mathbf{X_3}  \, \bar{\mu}_q
+ \text{Flux}(\mathbf{P_{X_3}}) (\tau,s) \notag \\
\intertext{that is}
 \int_{K(\tau,s)} e^{- 2 \Omega} u_t^2 \,\bar{\mu}_g  & = - \int_{\Sigma^O_{s}} r  e^{\gamma} \mathbf{m} \, \bar{\mu}_q
+ \int_{\Sigma^O_{\tau}} r  e^{\gamma} \mathbf{m} \, \bar{\mu}_q
+ \text{Flux}(\mathbf{P_{X_3}}) (\tau,s) 
\end{align}
where,
\begin{align*}
 \text{Flux}(\mathbf{P_{X_3}}) (\tau,s) = & \int_{C(\tau,s)} d\,\xi(\mathbf{P}_{\mathbf{X_3}})\, \bar{\mu}_\xi \\
 =& \,\halb \int_{C(\tau,s)}r e^{\gamma-\cal{F}} \mathbf{(e-m-f)}  \, \bar{\mu}_\xi \\
 \leq &\, c \, r_2(\tau) \int_{C(\tau,s)} (\mathbf{e}-\mathbf{m}) \, \bar{\mu}_\xi \\
= &\, -c \, r_2(\tau) \text{Flux}(\mathbf{P_{X_1}}) (\tau,s). \\
\end{align*}
We have,
\begin{align*}
 \int_{K(\tau,s)} e^{- 2 \Omega} u_t^2 \,  \bar{\mu}_g  & \leq  \int_{\Sigma^O_{s}} r  e^{\gamma} \mathbf{e} \,   \bar{\mu}_q
+ \int_{\Sigma^O_{\tau}} r  e^{\gamma} \mathbf{e} \,\, \bar{\mu}_q
- c \, r_2(\tau) \text{Flux}(\mathbf{P_{X_1}}) (\tau,s) \\
& \leq c r_2(s)\int_{\Sigma^O_s} \mathbf{e} \, \bar{\mu}_q
+ \int_{\Sigma^O_{\tau}} r  e^{\gamma} \mathbf{e} \, \bar{\mu}_q
- c \, r_2(\tau) \text{Flux}(\mathbf{P_{X_1}}) (\tau,s)  \\
\end{align*}
Now let $s \to 0$ in \eqref{stokes_px3}, we get
\begin{align*}
\frac{1}{ r_2(\tau) } \,\int_{K(\tau)} e^{-2 \Omega} u_t^2 \,  \bar{\mu}_g  &\leq \, \frac{1}{ r_2(\tau) }\, \int_{\Sigma^O_{\tau}} r  e^{ \gamma} \mathbf{e} \, \bar{\mu}_q
- c \,\text{Flux}(\mathbf{P_{X_1}}) (\tau) \\
\intertext{therefore,}
\frac{1}{ r_2(\tau) } \int_{K(\tau)} e^{- 2 \Omega} u_t^2 \, \bar{\mu}_g & \leq c \frac{1}{ r_2(\tau) } \int_{B_{r_2}(\tau)} r  e^{ \gamma} \mathbf{e} \,\bar{\mu}_q
- c \,\,  \text{Flux}(\mathbf{P_{X_1}}) (\tau) \\
& = c\, \frac{1}{ r_2(\tau) } \left( \int_{B_{r_2(\tau)}} r  e^{ \gamma} \mathbf{e} \, \bar{\mu}_q + 
\int_{B_{r_2(\tau)}\setminus B_{r_1(\tau)}} r  e^{ \gamma} \mathbf{e} \, \bar{\mu}_{q} \right) \\
& \quad - c\,  \text{Flux}(\mathbf{P_{X_1}}) (\tau) \\
& \leq c \,\lambda\, E_0 +  c\,  E^O_{\text{ext}} (\tau) - c\,  \text{Flux}(\mathbf{P_{X_1}}) (\tau).
\end{align*}
For any $\eps > 0$ we can choose $\lambda$ small enough so that the first term $ < \frac{\eps}{3}$, then we can make $\tau$ small enough
so that $E^{O}_{\text{ext}} (\tau) < \frac{\eps}{3} $ and $ |\text{Flux}(\mathbf{P_{X_1}}) (\tau)| < \frac{\eps}{3} $
as discussed previously.
\end{proof}
\section{Non-Concentration of Energy with Grillakis Condition}
Recall the expression for energy 
\begin{align}\label{newenergy}
 \mathbf{e} =& \mathbf{T}(\mathbf{X_1},\mathbf{X_1}) \notag\\
 =& \halb \left(\Vert \mathbf{X_1} (U)\Vert^2_h + \Vert \mathbf{X_2} (U)\Vert^2_h +  \Vert m(U)\Vert^2_h  \right) \notag\\
 =& \halb \left(e^{-2\Omega}u_t^2 + e^{-2\gamma}u_r^2+ \frac{f^2(u)}{r^2} \right)
\end{align}
where $m = \frac{1}{r}\ptl_\theta$ as defined in \eqref{coonulltriad}.
In Lemma \ref{kin} we proved that the spacetime integral of $e^{-2\Omega}u_t^2$ does not concentrate in the past null cone of $O$.
In the following lemma we shall prove that the spacetime integral of rotational potential energy i.e.,
\[ \int_{K_\tau} \Vert m(U) \Vert^2_h \bar{\mu}_g = \int_{K_\tau} \frac{f^2(u)}{r^2} \bar{\mu}_g = \int_{K_\tau} \mathbf{f} \bar{\mu}_g \]
does not concentrate.
The proof is based on the condition that the target manifold $(N,h)$ satisfies the Grillakis
condition
\begin{align}
f_s(s)f(s)+f^2(s) >0  \,\,\,\text{for}\,\,\, s>0.
\end{align}
This condition is weaker than the condition that $(N,h)$ is geodesically convex \eqref{geoconvex}.
\begin{lem}[Non-concentration of integrated rotational potential energy]\label{fterm}
Let $(N,h)$ be the target manifold satisfying 
 \begin{align}
 f(u)f_u(u) u + f^2(u) > 0 \,\,\,\text{for}\,\,\,u>0
 \end{align}
 then the spacetime integral of rotational potential energy does not concentrate i.e.,
 \begin{align}
 \int_{K_\tau} \mathbf{f} \,\bar{\mu}_g \to 0 \,\,\,\text{as}\,\,\, \tau \to 0.
 \end{align}
\end{lem}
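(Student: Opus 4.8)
The plan is to carry out step~5 of the program sketched in Section~\ref{ovcurrent}: work with the Morawetz multiplier $\mathbf{X_4}\fdg=r^a\,\ptl_r$ for $a\in(\halb,1)$ chosen close to $1$, correct it by the current $\mathbf{P}^\nu_\kappa\fdg=\kappa\,u^\nu u-\ptl^\nu\kappa\,\tfrac{u^2}{2}$ with $\kappa\fdg=\tfrac{1-a}{2}r^{a-1}$ and $u^\nu=g^{\nu\mu}\ptl_\mu u$, set $\mathbf{P}^\nu_{\text{tot}}\fdg=\mathbf{P}^\nu_{\mathbf{X_4}}+\mathbf{P}^\nu_\kappa$, and feed $\mathbf{P}_{\text{tot}}$ into the Stokes' identity \eqref{stokes_gen} on $K(\tau,s)$, letting $s\to0$. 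The first step is to compute the divergence of the correction current. Using $\square_g u=\tfrac{f_u(u)f(u)}{r^2}$, the identity $\grad_\nu\mathbf{P}^\nu_\kappa=\kappa u\,\square_g u+\kappa\,g^{\mu\nu}\ptl_\mu u\,\ptl_\nu u-\tfrac{u^2}{2}\square_g\kappa$, and (for the radial function $\kappa$) $\square_g\kappa=e^{-2\gamma}\tfrac{(1-a)^3}{2}r^{a-3}+\tfrac{(1-a)^2}{2}\mbo{\alpha}\,r^{a-1}\mathbf{f}$, where the last term comes from the Einstein relation $\Omega_r-\gamma_r=-r\mbo{\alpha}e^{2\gamma}\mathbf{f}$ (from \eqref{gamma_r}, \eqref{omega_r}), and adding \eqref{intro_px4}, the decisive cancellation is that the $e^{-2\gamma}u_r^2$ terms of $\mathbf{P}_{\mathbf{X_4}}$ and of $\mathbf{P}_\kappa$ annihilate each other, leaving
\[
\grad_\nu\mathbf{P}^\nu_{\text{tot}}=a\,r^{a-1}e^{-2\Omega}u_t^2+r^{a-3}\left[\frac{1-a}{2}\bigl(f^2(u)+u\,f_u(u)f(u)\bigr)-\frac{(1-a)^3}{4}u^2e^{-2\gamma}-\frac{(1-a)^2}{4}\mbo{\alpha}\,u^2f^2(u)\right].
\]

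The next step is to turn the bracket into a positive multiple of $f^2(u)$. The a priori bound $\|u\|_{L^\infty}\le c(E_0)$ confines $u$ to a fixed compact interval $[-c,c]$; the Grillakis condition \eqref{grillakis_condition} forces $f$ to be nonvanishing on $(0,c]$ and, combined with $f(\rho)=\rho+O(\rho^3)$ near the origin, produces constants $\delta,C>0$ with $f^2(s)+s\,f_s(s)f(s)\ge\delta f^2(s)$ and $s^2\le C\,f^2(s)$ for $|s|\le c$. Together with $e^{-2\gamma}\le1$ (Lemma~\ref{metric_uniform}), the bracket is bounded below by $(1-a)f^2(u)\bigl[\tfrac{\delta}{2}-\tfrac{(1-a)^2}{4}C-\tfrac{1-a}{4}\mbo{\alpha}C\bigr]$, which is $\ge\tfrac{(1-a)\delta}{4}f^2(u)$ as soon as $a$ is close enough to $1$. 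Hence $\grad_\nu\mathbf{P}^\nu_{\text{tot}}\ge c(1-a)\,r^{a-3}f^2(u)\ge0$ pointwise; smoothness of $u$ at the axis (where $u(t,0)=0$ and $u\sim r$) makes this density integrable near $r=0$, so the bulk integral over $K(\tau)$ is finite.

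The third step is the boundary analysis. Because $\mathbf{X_4}$ and $\kappa$ carry the weights $r^a$ and $r^{a-1}$, each boundary term produced by the divergence theorem applied to $\mathbf{P}_{\text{tot}}$ on $K(\tau,s)$ — the fluxes through $\Sigma^O_s$ and $\Sigma^O_\tau$ and the flux through the mantel $C(\tau,s)$ — is dominated by a power $r_2^a$ of the relevant cone radius times a quantity controlled by the conserved energy $E_0$, by $|\text{Flux}(\mathbf{P}_{\mathbf{X_1}})|$, and by $\|u\|_{L^\infty}$, using $|\mathbf{m}|\le\mathbf{e}$, the uniform metric bounds, and Cauchy--Schwarz in the $r$ variable (legitimate since $a>\halb>0$, so $\int_0^{r_2}r^{2a-1}\,dr<\infty$). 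Letting $s\to0$ annihilates the $\Sigma^O_s$ term and yields $\int_{K(\tau)}\grad_\nu\mathbf{P}^\nu_{\text{tot}}\,\bar{\mu}_g=-\int_{\Sigma^O_\tau}e^{\Omega}\mathbf{P}^t_{\text{tot}}\,\bar{\mu}_q+\text{Flux}(\mathbf{P}_{\text{tot}})(\tau)$, and as $\tau\to0$ the right-hand side tends to $0$ by conservation of $E_0$ and Corollary~\ref{flux_px1_zero}. Finally, since $r\le r_2(\tau)$ throughout $K(\tau)$ and $a-1<0$, one has $r^{a-3}f^2(u)\,\bar{\mu}_g=r^{a-1}\mathbf{f}\,\bar{\mu}_g\ge r_2(\tau)^{a-1}\mathbf{f}\,\bar{\mu}_g$, so
\[
\int_{K_\tau}\mathbf{f}\,\bar{\mu}_g\le\frac{r_2(\tau)^{1-a}}{c(1-a)}\int_{K(\tau)}\grad_\nu\mathbf{P}^\nu_{\text{tot}}\,\bar{\mu}_g\longrightarrow0\qquad\text{as }\tau\to0 ,
\]
which is the claim.

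I expect the main obstacle to be the algebra of the first two steps: the corrector $\mathbf{P}_\kappa$ must be calibrated so that it both kills the bad gradient term $e^{-2\gamma}u_r^2$ exactly and leaves behind only lower-order residuals (from $\square_g\kappa$ and from substituting Einstein's equations) that the $L^\infty$ bound on $u$ together with the freedom to push $a$ towards $1$ allow to be absorbed into the Grillakis term. The secondary difficulty is the careful bookkeeping of the decay rates $r_2(\tau)^a$ and $r_2(\tau)^{1-a}$ in the boundary estimates, and verifying that the $r^{a-2}$-type singular weights coming from $\kappa_r$ are harmless both in the bulk (thanks to $u(t,0)=0$) and on the mantel.
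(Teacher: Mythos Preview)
Your proof is correct and follows the same route as the paper: the same corrected Morawetz current $\mathbf{P}_{\text{tot}}=\mathbf{P}_{\mathbf{X_4}}+\mathbf{P}_\kappa$ with $\kappa=\tfrac{1-a}{2}r^{a-1}$, the same divergence identity (your constants are in fact more carefully tracked than the paper's), the same absorption of the residual $u^2$ terms via the Grillakis condition and $\|u\|_{L^\infty}\le c$, and the same Stokes' argument on $K(\tau,s)$ with boundary terms controlled by $r_2(\tau)^a$ times energy quantities.

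The one noteworthy difference is in the \emph{strength} of the conclusion. You prove exactly what the lemma asserts, $\int_{K_\tau}\mathbf{f}\,\bar{\mu}_g\to0$, by the direct route: boundary terms are $O(r_2^a)$, the weight comparison $r^{a-1}\ge r_2(\tau)^{a-1}$ costs a factor $r_2^{1-a}$, and the product $r_2^{1-a}\cdot r_2^a=r_2\to0$. The paper instead establishes the sharper weighted statement $r_2(\tau)^{-a}\int_{K_\tau}\mathbf{f}\,r^{a-1}\,\bar{\mu}_g\to0$, for which the crude bound $\int_{\Sigma^O_\tau}\mathbf{e}\,r^a\,\bar{\mu}_q\le r_2^a E_0$ is not good enough; the paper therefore invokes the annular non-concentration Lemma~\ref{ann} through the $\lambda$-splitting of $\Sigma^O_\tau$ into $B_{r_1}$ and $B_{r_2}\setminus B_{r_1}$. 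Your shortcut is entirely legitimate for the lemma as stated, but be aware that the paper's stronger form is what is fed into Corollary~\ref{pot} and ultimately Theorem~\ref{theorem_ener_nonconc}.
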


\begin{proof}
 Recall the momentum vector field $\mathbf{P}_\mathbf{X_4}$
\begin{align*}
 \mathbf{P}_\mathbf{X_4} & = -e^{\gamma -\Omega} r^a \, \mathbf{m} \ptl_t + r^a \,(\mathbf{e}-\mathbf{f}) \ptl_r  \\
& = e^{\gamma} r^a (-\mathbf{m} \mathbf{X_1} + (\mathbf{e}-\mathbf{f}) \mathbf{X_2})
\end{align*}
and the divergence from \eqref{intro_px4}
\begin{align*}
\nabla_\nu  \mathbf{P}^\nu_\mathbf{X_4} &= \halb \left (  (1+a) r^{a-1}  \right) e^{-2\Omega} u^2_t
+ \halb \left ( (a-1) r^{a-1}  \right) e^{-2\gamma} u^2_r \\
& \quad + \halb \left ( (1-a) r^{a-1}  \right) \frac{f^2(u)}{r^2}.
 \end{align*}
Let now us construct the vector $\mathbf{P}_\kappa^\nu$ such that
\begin{align*}
 \mathbf{P}_\kappa^\nu \fdg = \kappa u^\nu u - \kappa^\nu \frac{u^2}{2},
\end{align*}
where $\kappa \fdg = \frac{1-a}{2}r^{a-1}$ for $a \in (\halb, 1)$
then the divergence,
\begin{align*}
 \nabla_\nu  \mathbf{P}^\nu_\kappa &= \nabla_\nu (\kappa u^\nu u) - \nabla_\nu (\kappa^\nu \frac{u^2}{2}) \\
&= \kappa (\square u)u + \kappa u^\nu u_\nu + u^\nu \kappa_\nu u - ( \square \kappa) \frac{u^2}{2} - \kappa^\nu u u_\nu \\
&= \kappa \frac{f(u)f_u(u)u}{r^2} + \kappa u^\nu u_\nu - ( \square \kappa) \frac{u^2}{2}
\end{align*}
and
\begin{align*}
 \square \kappa &= e^{-2\gamma} \left(\kappa_{rr} + \frac{\kappa_r}{r} + (\Omega_r -\gamma_r)\kappa_r \right) \\
&= e^{-2\gamma}r^{a-3}\frac{(1-a)^2}{2} \left(1-a + r^2 \mbo{\alpha}e^{2\gamma}\, \mathbf{f} \right). \\
\end{align*}
Let us define a vector $\mathbf{P}^\nu_{\text{tot}}$ such that
\begin{align*}
 \mathbf{P}^\nu_{\text{tot}} \fdg =\mathbf{P}^\nu_{\mathbf{X}_4} + \mathbf{P}^\nu_\kappa. \\
\end{align*}
Therefore,
\begin{align*}
 \nabla_\nu  \mathbf{P}^\nu_{\text{tot}} &= \nabla_\nu  \mathbf{P}^\nu_{\mathbf{X_4}}+ \nabla_\nu  \mathbf{P}^\nu_\kappa \\
&= \kappa \frac{f(u)f_u(u)u}{r^2} + a r^{a-1} e^{-2\Omega} u_t^2 + \kappa \,\mathbf{f} - e^{-2\gamma}\frac{(1-a)^2}{2} r^{a-1} \left(1-a 
+ r^2 \mbo{\alpha}e^{2\gamma} \mathbf{f} \right) \frac{u^2}{r^2}.
\end{align*}
Applying the Stokes' theorem on $K(\tau,s),$
\begin{align}\label{stokes_px4}
 \int_{K(\tau,s)} \nabla_\nu  \mathbf{P}_{\text{tot}} ^\nu\,  \bar{\mu}_g  = \int_{\Sigma^O_{s}} e^{\Omega} \, \mathbf{P}_{\text{tot}}^{\,t} \,\bar{\mu}_q
- \int_{\Sigma^O_{\tau}}e^{\Omega} \,\mathbf{P}_{\text{tot}} ^{\,t}  \,\bar{\mu}_q
+ \text{Flux}(\mathbf{P}_{\text{tot}}) (\tau,s). 
\end{align}

\begin{align} \label{px4_s}
 \int_{\Sigma^O_{s}} e^{\Omega} \, \mathbf{P}_{\text{tot}}^{\,t} \, \bar{\mu}_q &= 
 -\int_{\Sigma^O_{s}} \mathbf{m}\, r^a e^{\gamma} +e^{\Omega} \kappa\, u^t \, u \,\bar{\mu}_q \notag\\
&\leq \int_{\Sigma^O_{s}} \mathbf{e}\,r^a e^{\gamma} +|  \, e^{-\Omega} \, u_t|\, |\kappa\,u| \,\bar{\mu}_q, \notag \\
\intertext{applying the Cauchy-Schwarz inequality, we get}
\int_{\Sigma^O_{s}} e^{\Omega} \, \mathbf{P}_{\text{tot}}^{\,t} \, \bar{\mu}_q
&\leq c\,r^a_2(s)\int_{\Sigma^O_{s}} \mathbf{e} \,\bar{\mu}_q   
+ \frac{1-a}{2}\left( \int_{\Sigma^O_{s}} e^{-2\Omega} \, u^2_t r^{2a}\, 
\bar{\mu}_q \right)^\halb \left(\int_{\Sigma^O_{s}} \frac{u^2}{r^2}\,   \bar{\mu}_q \right)^\halb \notag \\
& \leq  c\,r^a_2(s) \notag \\
 & \to 0
\end{align}
as $s \to 0$.
Similarly, the second term in \eqref{stokes_px4} can be estimated as 
\begin{align}\label{px4_tau}
-\int_{\Sigma^O_{\tau}} e^{\Omega} \, \mathbf{P}_{\text{tot}}^{\,t} \,  \bar{\mu}_q \leq c\, \int_{\Sigma^O_{\tau}} \mathbf{e}\,r^a\,  \bar{\mu}_q 
+ c \left( \int_{\Sigma^O_\tau} \mathbf{e}\,r^{2a}\,\bar{\mu}_q \right)^{\halb}
\end{align}
The flux of $\mathbf{P}_{\text{tot}}$ though the null surface $C(\tau,s)$ can be written as
\begin{align}\label{ptot_genflux}
 \text{Flux}(\mathbf{P}_{\text{tot}}) (t,s) & = \int_{C(\tau,s)} d\, \xi (\mathbf{P}_{\text{tot}}) \bar{\mu}_\xi \notag\\
&= \int_{C(\tau,s)} d\, \xi (\mathbf{P}_{\mathbf{X_4}}) \bar{\mu}_\xi + 
\int_{C(\tau,s)} d\, \xi (\mathbf{P}_{\kappa}) \bar{\mu}_\xi.
\end{align}
Let us consider the terms in the right side of \eqref{ptot_genflux} individually.
We have
\begin{align} \label{ptot_flux}
 \text{Flux}(\mathbf{P}_{\mathbf{X_4}})(\tau,s) &= \int_{C(\tau,s)} d\, \xi (\mathbf{P}_{\mathbf{X_4}}) 
 \bar{\mu}_\xi \notag\\
&=\halb \int_{C(\tau,s)} e^{\gamma-\cal{F}} r^a (\mathbf{e}-\mathbf{m}-\mathbf{f}) \bar{\mu}_\xi \notag \\
& \leq -c r^a_2(\tau)\,\text{Flux}(\mathbf{P}_{\mathbf{X_1}})(\tau,s) \notag\\ \notag\\
\intertext{and}
\text{Flux}(\mathbf{P}_{\kappa})(\tau,s) &= \int_{C(\tau,s)} d\, \xi (\mathbf{P}_{\kappa}) \bar{\mu}_\xi \notag \\
& = \halb \int_{C(\tau,s)}  \left( u \left(-\mathbf{X_1}(u) + \mathbf{X_2}(u)\right) 
+  \halb \kappa e^{-(\gamma +\cal{F})} (1-a)r^{-1} u^2  ) \right)\,\bar{\mu}_\xi \notag\\
&= \halb \int_{C(\tau,s)}  \left( u \left(-\mathbf{X_1}(u) + \mathbf{X_2}(u)\right) 
+  e^{-(\gamma +\cal{F})} \frac{(1-a)^2}{4} \,\frac{ u^2}{r^2}r^a   \right)\,\bar{\mu}_\xi \notag\\
&\leq \halb \int_{C(\tau,s)}  \left( u \left(-\mathbf{X_1}(u) + \mathbf{X_2}(u)\right) 
+  c\, \frac{(1-a)^2}{4} \mathbf{f}\, r^a \,e^{-\cal{F}} \right)\,\bar{\mu}_\xi \notag\\
&\leq\halb \int_{C(\tau,s)}  \left( u \left(-\mathbf{X_1}(u) + \mathbf{X_2}(u)\right) 
+  c\, \frac{(1-a)^2}{2} (\mathbf{e-m})\, r^a \,e^{-\cal{F}} \right)\,\bar{\mu}_\xi. \\
\intertext{Using the Cauchy-Schwarz inequality, \eqref{ptot_flux} can be estimated as}
\text{Flux}(\mathbf{P}_{\kappa})(\tau,s) & \leq c r^a_2(\tau) \left(\int_{C\tau,s)} (\mathbf{e} - \mathbf{m} ) \bar{\mu}_\xi \right)^\halb +
c r^a_2(\tau) \left(\int_{C\tau,s)} (\mathbf{e} - \mathbf{m} ) \bar{\mu}_\xi \right)\notag\\
& \leq - c r_2^a(\tau) \text{Flux}^\halb(\mathbf{P}_\mathbf{X_1})(\tau,s) - c r_2^a(\tau) \text{Flux}(\mathbf{P}_\mathbf{X_1})(\tau,s).
\end{align}
Therefore,
\begin{align}
\text{Flux}(\mathbf{P}_{\text{tot}}) (t,s) 
&\leq - c r_2^a(\tau) \text{Flux}^\halb (\mathbf{P}_\mathbf{X_1})(\tau,s)- c r_2^a(\tau) \text{Flux}(\mathbf{P}_\mathbf{X_1})(\tau,s).
\end{align}
If $f(u)f_u(u)u + f^2(u) >0$ for $u>0$, we can choose `$a$' close enough to $1$ such that 
\[ f(u)f_u(u)u + f^2(u) \geq e^{-2\gamma}(1-a)^2 u^2 \]
so that 
\[f(u)f_u(u)u + f^2(u) - \frac{e^{-2\gamma}(1-a)^2}{2} u^2 \geq \frac{e^{-2\gamma}(1-a)^2}{2} u^2. \]
Now, if we go back to the Stokes' theorem \eqref{stokes_px4} and use the estimates \eqref{px4_s}, \eqref{px4_tau} and 
\eqref{ptot_flux}, we get
\begin{align*}
 &a \int_{K(\tau,s)} e^{-2\Omega} u^2_t r^{a-1} d\bar{\mu}_g  + \frac{(1-a)^2}{2} \int_{K(\tau,s)} e^{-2\gamma} \frac{u^2}{r^2} r^{a-1} \,\bar{\mu}_g \\
&\leq c \, r^a_2(s) + c \, \int_{\Sigma^O_{\tau}} \mathbf{e}\,r^a\,  \bar{\mu}_q 
+ c\left(\int_{\Sigma^O_{\tau}} \mathbf{e}\,r^{2a}\,  \bar{\mu}_q\right)^{\halb} \\
&\quad -c \, r^a_2(\tau) \text{Flux}^\halb(\mathbf{P}_\mathbf{X_1})(\tau,s)  -c \, r^a_2(\tau) \text{Flux}(\mathbf{P}_\mathbf{X_1})(\tau,s)
\end{align*}
as $s \to 0$ we get,
\begin{align}\label{simpler_stokes_ptot}
  &a \int_{K(\tau)} e^{-2\Omega} u^2_t r^{a-1} \,\bar{\mu}_g   + \frac{(1-a)^2}{2} \int_{K(\tau)} e^{-2\gamma} \frac{u^2}{r^2} r^{a-1} \,\bar{\mu}_g \notag \\
&\leq c \, \int_{\Sigma^O_{\tau}} \mathbf{e}\,r^a \, \bar{\mu}_q + c \left(\int_{\Sigma^O_{\tau}} \mathbf{e}\,r^{2a} \, \bar{\mu}_q\right)^{\halb} \notag \\
&\quad -c\,r^a_2(\tau)\text{Flux}(\mathbf{P}_\mathbf{X_1})(\tau) -c \, r^a_2(\tau) \text{Flux}^\halb(\mathbf{P}_\mathbf{X_1})(\tau,s).
\end{align}
In \eqref{simpler_stokes_ptot}, we can estimate
\begin{align} \label{energy_ra}
r^{-a}_2(\tau)\int_{\Sigma^O_{\tau}} \mathbf{e}\,r^a\,  \bar{\mu}_q 
&= r^{-a}_2(\tau)\left( \int_{B_{r_1(\tau)}} \mathbf{e}\,r^a\,\bar{\mu}_q + \int_{B_{r_2(\tau)} \setminus B_{r_1(\tau)}} 
\mathbf{e}\,r^a\,\bar{\mu}_q  \right) \notag\\
 &\leq r^{-a}_2(\tau)\left(r^a_1(\tau) \int_{B_{r_1(\tau)}} \mathbf{e}\,\bar{\mu}_q + r^a_2(\tau) \int_{B_{r_2(\tau)} \setminus B_{r_1(\tau)}} 
\mathbf{e}\,\bar{\mu}_q  \right) \notag\\
 & \leq c \lambda^a \, E_0 +  c\,E^O_{\text{ext}} (\tau)
\end{align}
and
\begin{align}\label{energy_ra_sqrt}
 r^{-a}_2(\tau)\left(\int_{\Sigma^O_{\tau}} \mathbf{e}\,r^{2a} \, \bar{\mu}_q\right)^{\halb} 
 &= r_2^{-a}(\tau) \left(\int_{B_{r_1(\tau)}} \mathbf{e}\,r^{2a}\,\bar{\mu}_q + \int_{B_{r_2(\tau)} \setminus B_{r_1(\tau)}} \mathbf{e}\, r^{2a}\,\bar{\mu}_q \right)^{\halb} \notag\\
 &\leq \left(\left(\frac{r_1(\tau)}{r_2(\tau)}\right)^{2a}\int_{B_{r_1(\tau)}} \mathbf{e}\,\bar{\mu}_q 
 + \int_{B_{r_2(\tau)}} \mathbf{e}\,\bar{\mu}_q \right)^{\halb} \notag\\
 &\leq \left( \lambda^{2a} E_0+ E^O_{\text{ext}}(\tau) \right)^{\halb}.
\end{align}
Hence, in view of \eqref{energy_ra}, \eqref{energy_ra_sqrt}, Corollary \ref{flux_px1_zero} and Lemma \ref{kin},
we can choose $\lambda$ and $\tau$ in \eqref{simpler_stokes_ptot} small enough so that
\[ \frac{1}{r^a_2(\tau)}\int_{K(\tau)}\frac{u^2}{r^2} r^{a-1} \,\bar{\mu}_g < \eps \]
for any $\eps >0$.
Furthermore, from equation 2.11 in \cite{jal_tah1} there exists a real constant $c$ dependent only
on the initial energy $E_0$ such that
\begin{align}
 \frac{1}{c} u^2 \leq f^2(u) \leq c u^2.
\end{align}
Consequently,
\[ \Vert m(U) \Vert_h^2 \equiv \mathbf{f} \leq \frac{u^2}{r^2}, \]
where $m = \frac{1}{r}\ptl_\theta$ as defined in \eqref{coonulltriad}.
Therefore it follows that
 \begin{align*}
 \frac{1}{r^a_2(\tau)}\int_{K_\tau} \mathbf{f}\,\,r^{a-1}\, \,\bar{\mu}_g \to 0 \,\,\,\text{as}\,\,\, \tau \to 0.
 \end{align*}

\end{proof}
The remaining term in \eqref{newenergy} is $\Vert \mathbf{X_2}(U) \Vert^2_h = e^{-2\gamma}u_r^2$. We prove the
non-concentration of this term by using the Stokes' theorem on the divergence of $\mathbf{P}_{\mathbf{X_4}}$. 

\begin{cor}\label{pot}
Under the usual notation, the spacetime integral of radial potential energy in the past null cone of $O$ does not concentrate
\begin{align}
 \frac{1}{r^a_2(\tau)}\int_{K_{\tau}} e^{-2\gamma}u^2_r r^{a-1} \,\bar{\mu}_g \to 0\,\,\, \text{as}\,\,\, \tau \to 0
 \end{align}
\end{cor}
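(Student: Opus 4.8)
The plan is to run a Stokes' theorem argument for the momentum $\mathbf{P}_{\mathbf{X_4}}$ \emph{by itself}, rather than for the combination $\mathbf{P}_{\text{tot}}=\mathbf{P}_{\mathbf{X_4}}+\mathbf{P}_{\kappa}$ used in Lemma~\ref{fterm}. The key observation is that in the divergence formula \eqref{intro_px4} the term $e^{-2\gamma}u_r^2$ appears with coefficient $\halb(a-1)<0$ (since $a\in(\halb,1)$), so isolating it moves a manifestly nonnegative quantity to one side:
\[
\halb(1-a)\, r^{a-1}e^{-2\gamma}u_r^2 \;=\; \halb(1+a)\, r^{a-1}e^{-2\Omega}u_t^2 \;+\; \halb(1-a)\, r^{a-1}\mathbf{f} \;-\; \nabla_\nu\mathbf{P}^\nu_{\mathbf{X_4}}.
\]
I would integrate this over $K(\tau,s)$ and apply the Stokes' identity \eqref{stokes_gen} to the last term, obtaining that $\halb(1-a)\int_{K(\tau,s)} r^{a-1}e^{-2\gamma}u_r^2\,\bar\mu_g$ equals the bulk integrals of $r^{a-1}e^{-2\Omega}u_t^2$ and $r^{a-1}\mathbf{f}$, plus the boundary terms $-\int_{\Sigma^O_s}e^\Omega\mathbf{P}^t_{\mathbf{X_4}}\,\bar\mu_q+\int_{\Sigma^O_\tau}e^\Omega\mathbf{P}^t_{\mathbf{X_4}}\,\bar\mu_q$, minus $\text{Flux}(\mathbf{P}_{\mathbf{X_4}})(\tau,s)$. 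I would then divide by $r_2^a(\tau)$ and let $s\to 0$ (monotone convergence, all bulk integrands being nonnegative).

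Next I would estimate each term on the right after division by $r_2^a(\tau)$. The two bulk terms are already controlled: the proof of Lemma~\ref{fterm}, specifically the estimate \eqref{simpler_stokes_ptot} together with \eqref{energy_ra}, \eqref{energy_ra_sqrt}, Corollary~\ref{flux_px1_zero} and Lemma~\ref{kin}, shows that both $\tfrac{1}{r_2^a(\tau)}\int_{K(\tau)} r^{a-1}e^{-2\Omega}u_t^2\,\bar\mu_g\to 0$ and $\tfrac{1}{r_2^a(\tau)}\int_{K_\tau} r^{a-1}\mathbf{f}\,\bar\mu_g\to 0$ as $\tau\to 0$. For the boundary terms, since $e^\Omega\mathbf{P}^t_{\mathbf{X_4}}=-e^\gamma r^a\mathbf{m}$ and $|\mathbf{m}|\le\mathbf{e}$ by Cauchy--Schwarz, one has $|e^\Omega\mathbf{P}^t_{\mathbf{X_4}}|\le c\, r^a\,\mathbf{e}$ using the uniform bound on $\gamma$ from Lemma~\ref{metric_uniform}; hence the $\Sigma^O_s$ term is bounded by $c\, r_2^a(s)E_0\to 0$ as $s\to 0$ with $\tau$ fixed, while the $\Sigma^O_\tau$ term, after dividing by $r_2^a(\tau)$, is handled exactly as in \eqref{energy_ra} by splitting $B_{r_2(\tau)}=B_{r_1(\tau)}\cup\big(B_{r_2(\tau)}\setminus B_{r_1(\tau)}\big)$ and using $r_1(\tau)/r_2(\tau)\asymp\lambda$ together with Lemma~\ref{ann}, giving a bound $c\lambda^a E_0+c\, E^O_{\text{ext}}(\tau)$.

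The remaining term, and the delicate one, is the flux through the null boundary $C(\tau,s)$. Here $\text{Flux}(\mathbf{P}_{\mathbf{X_4}})(\tau,s)=\halb\int_{C(\tau,s)} e^{\gamma-\mathcal{F}} r^a (\mathbf{e}-\mathbf{m}-\mathbf{f})\,\bar\mu_\xi$, and unlike $\mathbf{e}-\mathbf{m}$ the integrand $\mathbf{e}-\mathbf{m}-\mathbf{f}=\halb\big((\mathbf{X_1}u-\mathbf{X_2}u)^2-\mathbf{f}\big)$ is not sign-definite. The plan is to note $|\mathbf{e}-\mathbf{m}-\mathbf{f}|\le\mathbf{e}-\mathbf{m}$ (since $\mathbf{e}-\mathbf{m}=\halb\big((\mathbf{X_1}u-\mathbf{X_2}u)^2+\mathbf{f}\big)\ge\halb\mathbf{f}$), so that, using $r\le c\, r_2(\tau)$ on $C(\tau,s)$ together with the uniform bounds on $\gamma$ and $\mathcal{F}$, $|\text{Flux}(\mathbf{P}_{\mathbf{X_4}})(\tau,s)|\le c\, r_2^a(\tau)\int_{C(\tau,s)}(\mathbf{e}-\mathbf{m})\,\bar\mu_\xi\le c\, r_2^a(\tau)\,|\text{Flux}(\mathbf{P}_{\mathbf{X_1}})(\tau,s)|$. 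Dividing by $r_2^a(\tau)$ and letting $s\to 0$ and then $\tau\to 0$, this vanishes by Corollary~\ref{flux_px1_zero}. Collecting everything and choosing $\lambda$ small and then $\tau$ small, and using $1-a>0$, yields $\tfrac{1}{r_2^a(\tau)}\int_{K_\tau} e^{-2\gamma}u_r^2\, r^{a-1}\,\bar\mu_g\to 0$. The main obstacle is precisely this flux estimate: one needs the lower bound on $\mathbf{e}-\mathbf{m}-\mathbf{f}$ and the observation that the \emph{full} flux of $\mathbf{P}_{\mathbf{X_4}}$, not merely its nonnegative part, can be reabsorbed into the vanishing flux of the divergence-free $\mathbf{P}_{\mathbf{X_1}}$; once Lemmas~\ref{kin}, \ref{ann} and \ref{fterm} are in hand the other terms are routine.
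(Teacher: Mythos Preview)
Your proposal is correct and follows essentially the same approach as the paper: apply Stokes' theorem to $\mathbf{P}_{\mathbf{X_4}}$ alone, isolate the $e^{-2\gamma}u_r^2$ term via the sign $(a-1)<0$ in \eqref{intro_px4}, and control the remaining bulk, boundary, and flux contributions by Lemmas~\ref{kin}, \ref{ann}, \ref{fterm} and Corollary~\ref{flux_px1_zero}. Your treatment is in fact more careful than the paper's terse version, since you make explicit the two-sided bound $|\mathbf{e}-\mathbf{m}-\mathbf{f}|\le \mathbf{e}-\mathbf{m}$ needed to control $-\text{Flux}(\mathbf{P}_{\mathbf{X_4}})$ (the paper's Lemma~\ref{fterm} only records the upper bound).
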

\begin{proof}
Let us again apply the Stokes' theorem for the $\bar{\mu}_g$-divergence of $\mathbf{P}_{\mathbf{X_4}}$
\begin{align*}
 \int_{K(\tau,s)} \nabla_\mu  \mathbf{P}_{\mathbf{X_4}} ^\mu\, \bar{\mu}_g  = \int_{\Sigma^O_{s}} e^{\Omega} \, \mathbf{P}^t_{\mathbf{X_4}} \, \bar{\mu}_q
- \int_{\Sigma^O_{\tau}}e^{\Omega} \,\mathbf{P}^t_{\mathbf{X_4}} \, \bar{\mu}_q
+ \text{Flux}(\mathbf{P}_{\mathbf{X_4}}) (\tau,s) 
\end{align*}
therefore, as $s \to 0$
\begin{align*}
 \int_{K(\tau)} e^{-2\gamma} u^2_r \,r^{a-1}\, \bar{\mu}_g &\leq c \int_{K(\tau)} \left(e^{-2\Omega} u^2_t + \frac{f^2(u)}{r^2} \right)r^{a-1}\, \bar{\mu}_g 
  + \int_{\Sigma^O_{\tau}}\mathbf{e}\,r^a  \, \bar{\mu}_q + r^a_2(\tau)\text{Flux}(\mathbf{P}_{\mathbf{X_1}}) (\tau). \\
 \end{align*}
Hence,
\begin{align*}
\frac{1}{r^a_2(\tau)}\int_{K(\tau)} e^{-2\gamma} u^2_r \,r^{a-1}\, \bar{\mu}_g < \eps
\end{align*}
for $\tau$ small enough.

\end{proof}

\begin{thm}[Non-concentration of energy] \label{theorem_ener_nonconc}
Let $(M,g,U)$ be a smooth, globally hyperbolic, equivariant maximal development of smooth, compactly supported equivariant 
initial data set $(\Sigma,q,\mathbf{K}, U_0, U_1)$ with finite initial energy and satisfying the constraint equations,
and let $(N,h)$ be a rotationally symmetric, complete, connected Riemannian manifold satisfying
\[ f_s(s)f(s)+f^2(s) >0  \,\,\,\text{for}\,\,\, s>0 \] 
and 
\[ \int_0^u f(s) \,d\,s \to \infty \,\,\,\text{as}\,\,\, u \to \infty, \] 
then the energy of the Einstein-wave map system \eqref{ewmcauchy_equi_new} cannot concentrate,
i.e., $E^O (t) \to 0$, where $O$ is the first (hypothetical) singularity of $M$.
\end{thm}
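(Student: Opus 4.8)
The plan is to feed the four spacetime non-concentration statements already proved --- Lemma~\ref{ann} (the energy does not concentrate away from the axis), Lemma~\ref{kin} (the kinetic energy does not concentrate), Lemma~\ref{fterm} (the rotational potential energy does not concentrate, here is where the Grillakis condition enters) and Corollary~\ref{pot} (the radial potential energy does not concentrate) --- together with the monotonicity of $E^{O}$, into a short pigeonhole argument that turns a spacetime bulk bound into a bound on the energy of a \emph{single} Cauchy slice. First I would reduce the statement: by Lemma~\ref{mon} the map $t\mapsto E^{O}(t)$ is nonincreasing as $t\uparrow 0$, so $E^{O}_{\mathrm{conc}}=\inf_{t}E^{O}(t)=\lim_{t\to 0}E^{O}(t)$ as in \eqref{E_conc}; hence it suffices to show $E^{O}_{\mathrm{conc}}=0$, and we may argue by contradiction, assuming $E^{O}_{\mathrm{conc}}>0$.

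Next I would collect the weighted bulk bounds. The inequality \eqref{simpler_stokes_ptot} obtained in the proof of Lemma~\ref{fterm} gives, after dividing by $r_{2}^{a}(\tau)$ and using Lemma~\ref{kin}, Corollary~\ref{flux_px1_zero}, \eqref{energy_ra} and \eqref{energy_ra_sqrt}, both
\[
\frac{1}{r_{2}^{a}(\tau)}\int_{K(\tau)}e^{-2\Omega}u_{t}^{2}\,r^{a-1}\,\bar{\mu}_{g}\to 0
\quad\text{and}\quad
\frac{1}{r_{2}^{a}(\tau)}\int_{K(\tau)}e^{-2\gamma}\frac{u^{2}}{r^{2}}\,r^{a-1}\,\bar{\mu}_{g}\to 0 ,
\]
and the second of these, together with the bound $f^{2}(u)\le c\,u^{2}$, is exactly Lemma~\ref{fterm} in its weighted form, while Corollary~\ref{pot} supplies the corresponding statement with $e^{-2\gamma}u_{r}^{2}$ in place of $e^{-2\Omega}u_{t}^{2}$. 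Since $\mathbf{e}=\tfrac{1}{2}\bigl(e^{-2\Omega}u_{t}^{2}+e^{-2\gamma}u_{r}^{2}+\mathbf{f}\bigr)$, adding the three gives
\[
\int_{K(\tau)}\mathbf{e}\,r^{a-1}\,\bar{\mu}_{g}=o\bigl(r_{2}^{a}(\tau)\bigr)=o(|\tau|^{a})\qquad(\tau\to 0),
\]
where I have used that $r_{2}(\tau)$ is comparable to $|\tau|$, a consequence of the uniform bounds on $\mathcal{F},\mathcal{G},\gamma,\Omega$ (Lemmas~\ref{metric_uniform} and \ref{null_uniform}) and hence on the Jacobian between the $(t,r,\theta)$ and $(\xi,\eta,\theta)$ charts.

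I would then foliate $K(\tau)$ by the slices $\Sigma^{O}_{t'}$, so that $\int_{K(\tau)}\mathbf{e}\,r^{a-1}\bar{\mu}_{g}=\int_{\tau}^{0}\bigl(\int_{\Sigma^{O}_{t'}}e^{\Omega}\,\mathbf{e}\,r^{a-1}\,\bar{\mu}_{q}\bigr)dt'$, and apply the mean value theorem on $[\tau,\tau/2]$ to extract a time $t'_{\tau}\in[\tau,\tau/2]$ with $\int_{\Sigma^{O}_{t'_{\tau}}}e^{\Omega}\mathbf{e}\,r^{a-1}\bar{\mu}_{q}\le\frac{2}{|\tau|}\,o(|\tau|^{a})=o(|\tau|^{a-1})$. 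On $\Sigma^{O}_{t'_{\tau}}$ the radial coordinate runs over $[0,r_{2}(t'_{\tau})]$, and since $a-1<0$ we have the pointwise comparison $r^{a}=r\cdot r^{a-1}\ge r\cdot r_{2}(t'_{\tau})^{a-1}$ there; together with the uniform bounds on $e^{\gamma},e^{\Omega}$ this yields $\int_{\Sigma^{O}_{t'_{\tau}}}e^{\Omega}\mathbf{e}\,r^{a-1}\bar{\mu}_{q}\ge c\,r_{2}(t'_{\tau})^{a-1}E^{O}(t'_{\tau})$. Using $|t'_{\tau}|\le|\tau|$ and $r_{2}(t'_{\tau})\sim|t'_{\tau}|$ one gets $E^{O}(t'_{\tau})\le C\,|\tau|^{1-a}\cdot o(|\tau|^{a-1})=o(1)$. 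Since $t'_{\tau}\to 0$ and $E^{O}_{\mathrm{conc}}=\inf_{t}E^{O}(t)\le E^{O}(t'_{\tau})$, letting $\tau\to 0$ forces $E^{O}_{\mathrm{conc}}=0$, contradicting the hypothesis; hence $E^{O}(t)\to 0$ as $t\to 0$, and the two hypotheses on $(N,h)$ have been used exactly where they enter the cited lemmas.

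The routine parts are the bookkeeping with the uniformly bounded metric factors and the comparability $r_{2}(\tau)\sim|\tau|$. The one step that carries the real content is the passage from the spacetime bulk bound $o(r_{2}^{a}(\tau))$ to a bound on $E^{O}$ of a single slice: this works only because the Morawetz weight $r^{a-1}$ with $a\in(\tfrac12,1)$ is singular at the axis, so that after the volume factor $\bar{\mu}_{q}\sim r\,dr\,d\theta$ the weighted density $\mathbf{e}\,r^{a}$ dominates the plain energy density $\mathbf{e}\,r$ up to the single factor $r_{2}^{a-1}$, which is then exactly absorbed by the $|\tau|^{-1}$ loss from the mean value theorem and the $o(|\tau|^{a})$ gain from the bulk bound; the freedom to take $a$ close to $1$ (already required in Lemma~\ref{fterm} to dominate the Grillakis term) keeps the residual loss $|\tau|^{1-a}$ harmless. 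An alternative to the pigeonhole step would be a dyadic Gr\"onwall argument in $t'$, but the mean value version is cleaner.
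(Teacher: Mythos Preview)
Your proposal is correct and follows essentially the same route as the paper: assemble Lemmas~\ref{kin}, \ref{fterm} and Corollary~\ref{pot} into $\frac{1}{r_{2}^{a}(\tau)}\int_{K(\tau)}\mathbf{e}\,r^{a-1}\,\bar{\mu}_{g}\to 0$, use $r^{a-1}\ge r_{2}^{\,a-1}$ on each slice to pass to an unweighted average, then extract a single slice with small $E^{O}$ and invoke the monotonicity of Lemma~\ref{mon}. The only cosmetic difference is in the extraction step: the paper first reduces to $\frac{1}{r_{2}(\tau)}\int_{K(\tau)}\mathbf{e}\,\bar{\mu}_{q}\,dt\to 0$ and then argues by contradiction (if $E^{O}(t)\ge\varepsilon$ for all $t$ then this average is $\ge c\varepsilon$), whereas you keep the weight until after a mean-value selection on $[\tau,\tau/2]$; these are interchangeable, and your initial ``assume $E^{O}_{\mathrm{conc}}>0$'' is never actually used and can be dropped.
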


\begin{proof}
If we collect the terms from Lemmas \ref{kin}, \ref{fterm} and Corollary \ref{pot}, we get
\[  \frac{1}{r^a_2(\tau)} \int_{K(\tau)} \mathbf{e} \,r^{a-1} \, \bar{\mu}_g \to 0 \]
as $\tau \to 0.$
But then,
\begin{align}\label{integrated_energy}
 \frac{1}{r^a_2(\tau)} \int_{K(\tau)} \mathbf{e}\, r^{a-1} \, \bar{\mu}_g
& \geq c\,\frac{1}{r^a_2(\tau)} \int_{K(\tau)} \mathbf{e} \, r^{a-1} \, \bar{\mu}_q \,d\,t \notag \\
&\geq c \frac{1}{r_2(\tau)} \int_{K(\tau)} \mathbf{e} \, \bar{\mu}_q \,d\,t \notag \\
&\to 0
\end{align}
as $\tau \to 0$ from the Sandwich theorem.
We claim that there exists a sequence $ \{\tau_i\} _i$ such that 
\begin{align}\label{energy_noncon_claim}
 \int_{\Sigma^O_{\tau_i}} \mathbf{e}\, \bar{\mu}_q \to 0
\end{align}
as $ \{ \tau_i\} _i \to 0 $.
Let us prove the claim by contradiction. Suppose there exists no sequence
such that \eqref{energy_noncon_claim} holds true. Then there exists an $\eps > 0$ 
such that 
\[ \int_{\Sigma^O_{\tau}} \mathbf{e} \,\bar{\mu}_q > \eps \]
for all $\tau \in (-1,0).$
Consequently,
\[ \frac{1}{|\tau|}\int_{\Sigma^O_{\tau}} \mathbf{e} \,\bar{\mu}_q \, d\,t > \eps  \]
This implies,
\begin{align}\label{normalized_spatial}
 \frac{1}{r_2(\tau)} \int_{K(\tau)} \mathbf{e} \,  \bar{\mu}_q \,d\,t > \eps
\end{align}
for all $\tau \in [-1,0)$.
This contradicts \eqref{integrated_energy}.
Hence, there exists a $ \{\tau_i\} _i$ such that 
\begin{align}
E^O (\tau_i) =  \int_{\Sigma^O_{\tau_i}} \mathbf{e}\,\bar{\mu}_q \to 0.
\end{align}
But $ E^O (\tau)$ is monotonic with respect to $\tau$, therefore
\[ E^O (\tau) \to 0\]
for all $\tau \to 0$ i.e., $E^O_{\text{conc}} =0$. This concludes the proof.
\end{proof}
\chapter{Outlook}

As shown in Chapter 2, vacuum Einstein's equations in $3+1$ dimensions for spacetimes with 
1-parameter isometry group can be interpreted as
self-gravitating wave maps in $2+1$ dimensions with the hyperbolic 2-plane as the target manifold. 
Therefore, any progress in understanding large energy global existence of critical self-gravitating wave maps is valuable 
in understanding the global behavior of Einstein's equations. This makes the critical self-gravitating wave maps problem
a fundamental problem in general relativity. Here, the rich variety of techniques developed for critical wave maps on
Minkowski background should be used to full advantage.

Earlier we spoke about the non-concentration of energy for self-gravitating wave maps under some
conditions on the target manifold (Grillakis condition). One may hope to extend this result further by weakening the conditions on
the target manifold and simultaneously establish a blow up criterion. In this context one may formulate the following 
conjecture
\begin{description}
\item[(C3) Rescaled convergence to a nontrivial harmonic map] Let us assume that an energy critical self-gravitating wave map blows up, 
then there exists a blow-up sequence of rescaled energy critical self-gravitating wave maps that converges strongly to a 
nontrivial harmonic map\footnote{a static solution of the Einstein wave map system} in $H^1_{\text{loc}}.$
\end{description}
Consequently, one can view the existence of a nontrivial static solution as a blow-up criterion for wave maps. If the energy of the 
wave map or the geometry of the base and the target manifolds does not allow the existence of a static solution then, by contradiction,
one can rule out the formation of blow up. This has been resolved on the flat background by Struwe \cite{struwe_equi}
for the case of equivariant wave maps and later followed by Sterbenz and Tataru \cite{sterb_tata_main,sterb_tata_long} for general 
wave maps with compact target. In the above context, the result which says that the kinetic energy density integrated over the backward 
null cone of a point does not concentrate, plays a vital role. This strategy seems to be the most promising in addressing the conjecture
\textbf{(C1)} for critical self-gravitating wave maps.

\subsection*{Small Energy Geodesic Completeness}
The resolution of \textbf{(C'2)} can be based on a Strichartz estimate. However, the fact that one is dealing with a dynamical
background causes additional obstacles which need to be overcomed. In the case of equivariant symmetry one hopes that the 
conservation law comes to the rescue. In the case of Minkowski background, \textbf{(C2)} has been proved using a version 
of the Strichartz estimate after reducing the wave maps equation to 4+1 critical wave equation with power nonlinearity. A similar transformation
can be thought of for the case of self-gravitating equivariant wave maps.
Define $v(t,r)$ such that $u = rv$, so we have

\begin{align*}
 u_t = r v_t,\,\,& u_{tt} = rv_{tt}, \\
 u_r= rv_r+ v,\,\,& u_{rr} = rv_{rr} + 2v_r. 
\end{align*}
then $\leftexp{3}{\square}_g u$ can be rewritten as
\begin{align*}
 \leftexp{3}{\square}_g u =& -e^{-2\Omega}(u_{tt} + (\gamma_t-\Omega_t)u_t) + e^{-2 \gamma}(u_{rr} + \frac{u_r}{r} + (\Omega_r - \gamma_r)u_r) \\
 =& r\bigg(-\,e^{-2\Omega}(v_{tt} + (\gamma_t-\Omega_t)v_t) + e^{-2 \gamma}(v_{rr}+(3r^{-1}+ \Omega_r - \gamma_r)v_r+ 
 (r^{-1}+\Omega_r - \gamma_r)vr^{-1}) \bigg) \\
 =& r \bigg( \leftexp{3}{\square}_g v + e^{-2\gamma} (2 v_r r^{-1} + (r^{-1}+ \Omega_r -\gamma_r)vr^{-1}) \bigg).
\end{align*}
Therefore \eqref{wmequi} translates to
\begin{align*}
\leftexp{3}{\square}_g v = & \frac{f_u(u) f(u)}{r^3} - e^{-2\gamma} \left( 2v_r r^{-1} + (r^{-1}+ \Omega_r -\gamma_r)vr^{-1}\right) \\
\end{align*}
Consider a manifold with the following metric
\begin{align*}
 d\,s^2_{\mbo{g}} = -e^{2\Omega} d\,t^2 + e^{2\gamma} d\,r^2 + r^2\, d\,\omega^2_{\mathbb{S}^3}
\end{align*}
where 
\[ d\,\omega^2_{\mathbb{S}^3} = d\,\theta_1 ^2 + \sin^2 \theta_1 ( d\,\theta_2 + \sin^2 \theta_2 \, d\,\theta^2_3) \]
then
\begin{align*}
 \leftexp{5}{\square}_{\mbo{g}} v = & \frac{1}{\sqrt{|\mbo{g}|}} \big( \ptl_t(\sqrt{|\mbo{g}|}) \mbo{g}^{tt}v_t + \ptl_r(\sqrt{|\mbo{g}|}) \mbo{g}^{rr}v_r \big) \\
=& -\frac{1}{e^{\gamma + \Omega}} \big(-\ptl_t(e^{\gamma -\Omega}v_t) + \frac{1}{r^3}\ptl_r(r^3 e^{\Omega -\gamma} v_r)  \big) \\
=& -e^{-2\Omega} (v_{tt} + (\gamma_t-\Omega_t)v_t)+ e^{-2\gamma}(v_rr + (\Omega_r -\gamma_r)v_r + 3v_r r^{-1}) \\
=& \leftexp{3}\square v + 2e^{2\gamma} v_r r^{-1}.
\end{align*}

Alternatively, one can use the wave kernel representation formula to prove that the solution can be globally and smoothly 
extended for small energy. This method has been used by Christodoulou, Tahvildar-Zadeh and Shatah 
for the cases of spherical and equivariant symmetry\cite{chris_tah1,jal_tah}. 
This opens the door for a variety of techniques to be tested in the resolution of \textbf{(C2)}.

\subsection*{Branches of problems}
 The main research program explained above gives rise to many interesting branches of problems that are significant
 in their own right. Here is a selection of a few.
 \begin{description}
 \item[Open Problem 1]
 We spoke of equivariant wave maps $ U = (u(t,r), k\theta) $ for $k=1$. The results in this work can be extended for a general $k$. 
 In the situation where there is blow up, the concentration profile of the wave map inside the backward null cone of blow up
 point depends on $k.$ On flat background this dependence is quantified by Raphael and Rodnianski\cite{raph_rod}. It is an interesting
 problem to study the equivalent situation in the self-gravitating case. For the self-gravitating case it is expected that the 
 gravitational coupling constant $\mbo{\alpha}$ also plays a role.
 \item [Open Problem 2]
 When global existence holds, a natural question to ask is the asymptotic behavior of the wave map field.
 In \cite{chris_tah2}\cite{jal_tah} Christodoulou, Tahvildar-Zadeh and Shatah have established quantitative behavior of spherically symmetric
 and equivariant wave maps on flat background. The proofs are based on estimates on the wave kernel of the representation
 formula of solutions the wave maps equation. It is an interesting problem to study the asymptotic behavior of 
 critical self-gravitating wave maps. The wave kernel representation formula of Vincent Moncrief for wave equations 
 on curved background could be a fruitful starting point in the resolution of this question.
 \item [Open Problem 3]
 So far we focused on the critical case of $2+1$ dimensions for wave maps. Christodoulou, through a series of beautiful
 papers, mathematically studied the gravitational collapse of Einstein- free wave equation system with spherical symmetry
 \cite{chris_selfgrav},\cite{chris_gravcoll}.The work provided many new insights on the evolution of Einstein's equations
 and eventually supported the cosmic censorship conjectures of Roger Penrose. It is a worthwhile problem to study the 
 dynamics of 3+1 Einstein wave map system with spherical symmetry based on the techniques of Christodoulou. The effect of the additional nonlinearity of the
 wave maps equations in the system is to be understood.
 \end{description}
 
\bibliographystyle{plain}
\cleardoublepage
\phantomsection
\addcontentsline{toc}{chapter}{Bibliography}
\bibliography{Phd_main.bib}

\begin{thebibliography}{10}

\bibitem{laanglob}
L.~Andersson.
\newblock The global existence problem in general relativity.
\newblock {\em preprint}, 2006.

\bibitem{laan_equinotes}
L.~Andersson.
\newblock Notes on equivariant self-gravitating wavemaps in (2+1) dimensions.
\newblock {\em unpublished}, 2009.

\bibitem{ash_var}
A.~Ashtekar and M.~Varadarajan.
\newblock Striking property of the gravitational {H}amiltonion.
\newblock {\em Phys. Rev. D}, 50(8):4944--4956, 1994.

\bibitem{bahou_gerard}
H.~Bahouri and P.~G\'erard.
\newblock High frequency approximation of solutions to critical wave equations.
\newblock {\em Amer. J. Math}, 121(1), 1999.

\bibitem{G2spacetimes}
B.~K. Berger, P.T. Chri\'sciel, and V.~Moncrief.
\newblock On ``asymptotically flat'' space-times with {G}-2 invariant {Cauchy}
  surfaces.
\newblock {\em Annals of Physics}, 237:322--354, 1995.

\bibitem{Bruhat_Geroch_classic}
Y.~Choquet-Bruhat and R.~Geroch.
\newblock Global aspects of the {C}auchy problem in general relativity.
\newblock {\em Commun. math. Phys.}, 14:329--335, 1969.

\bibitem{kaluz3}
Y.~Choquet-Bruhat and V.~Moncrief.
\newblock Existence theorem for solutions of {E}instein's equations with
  1-parameter spacelike isometry groups.
\newblock {\em Proceedings of Symposia in Pure Mathematics}, 59, 1996.

\bibitem{chris_selfgrav}
D.~Christodoulou.
\newblock The problem of self-gravitating scalar field.
\newblock {\em Comm. Math. Phys.}, 105:337--361, 1986.

\bibitem{chris_gravcoll}
D.~Christodoulou.
\newblock A mathematical theory of gravitational collapse.
\newblock {\em Comm. Math. Phys}, 109:613--647, 1987.

\bibitem{chris_tah2}
D.~Christodoulou and A.S.~Tahvildar Zadeh.
\newblock On the asymptotic behavior of spherically symmetric wave maps.
\newblock {\em Duke Math. J.}, 71(1):31--69, 1993.

\bibitem{chris_tah1}
D.~Christodoulou and A.S.~Tahvildar Zadeh.
\newblock On the regularity of spherically symmetric wave maps.
\newblock {\em Comm. Pure Appl. Math.}, 46(7):1041--1091, 1993.

\bibitem{dafermos_trap}
M.~Dafermos.
\newblock Spherically symmetric spacetimes with a trapped surface.
\newblock {\em Class. Quant. Grav.}, 22:2221--2232, 2005.

\bibitem{gallo}
G.~J. Galloway, K.~Schleich, and D.~M. Witt.
\newblock Nonexistence of marginally trapped surfaces and geons in 2+1 gravity.
\newblock {\em preprint}, 1005.0168v3, 2011.

\bibitem{grillakis}
M.~Grillakis.
\newblock Classical solutions for the equivariant wave map in 1+2 dimensions.
\newblock {\em preprint}, 1991.

\bibitem{ida}
D.~Ida.
\newblock No black hole theorem in three-dimensional gravity.
\newblock {\em Phys. Rev. Lett}, 85(3758), 2000.

\bibitem{kennig_merle2008}
C.~Kennig and F.~Merle.
\newblock Global well posedness, scattering and blow-up for the energy-critical
  focusing nonlinear wave equation.
\newblock {\em Acta. Math}, 201(2), 2008.

\bibitem{kl_mach93}
S.~Klainerman and M.~Machedon.
\newblock Space-time estimates for null forms and the local existence theorem.
\newblock {\em Comm. Pure Appl. Math.}, 46:1221--1268, 1993.

\bibitem{kl_mach97}
S.~Klainerman and M.~Machedon.
\newblock On the optimal local regularity for gauge field theories.
\newblock {\em Diff. and Integral Eq.}, 10:1019--1030, 1997.

\bibitem{klain_rod_wm2001}
S.~Klainerman and I.~Rodnianski.
\newblock On the global regularity of wave maps in the critical sobolev norm.
\newblock {\em Internat. Math. Res. Notices}, (13):655--677, 2001.

\bibitem{kl_selb}
S.~Klainerman and Selberg.
\newblock On the optimal local regularity for equations of wave map type.
\newblock {\em C.P.D.E}, 22:901--918, 1997.

\bibitem{krieg_wmhigh}
J.~Krieger.
\newblock Global regularity of wave maps from $\mathbb{R}^{3+1}$ to surfaces.
\newblock {\em Comm. Math. Phys.}, 238(1-2):333--366, 2003.

\bibitem{krieg_wmcrit}
J.~Krieger.
\newblock Global regularity of wave maps from $\mathbb{R}^{2+1}$ to
  $\mathbb{H}^2$.
\newblock {\em Comm. Math. Phys.}, 250(4):507--580, 2004.

\bibitem{krieg_schlag_ccwm}
J.~Krieger and W.~Schlag.
\newblock Concentration compactness for critical wave maps.
\newblock {\em AMS {M}onographs in {M}athematics}, 2012.

\bibitem{kaluz1}
V.~Moncrief.
\newblock Reduction of {E}instein's equations for vaccuum space-times with
  spacelike {U}(1) isometry groups.
\newblock {\em Ann. Physics}, 167, 1986.

\bibitem{kaluz2}
V.~Moncrief.
\newblock Reduction of {E}instein-{M}axwell and {E}instein-{M}axwell-{H}iggs
  equations for cosmological spacetimes with spacelike {U}(1) isometry groups.
\newblock {\em Classical Quantum Gravity}, 7, 1990.

\bibitem{NSU_wm}
A.~Nahmod, A.~Stefanov, and K.~Uhlenbeck.
\newblock On the well-posedness of the wave map problem in high dimensions.
\newblock {\em Comm. Anal. Geom.}, 11(1):49--83, 2003.

\bibitem{raph_rod}
P.~Raphael and I.~Rodnianski.
\newblock Stable blow up dynamics for the critical co-rotational wave maps and
  equivariant {Y}ang-{M}ills problems.
\newblock {\em preprint}, 0911.0692v1, 2009.

\bibitem{rendall_book}
A.D. Rendall.
\newblock Partial differential equations in general relativity.
\newblock {\em Oxford {U}niversity {P}ress}, 2008.

\bibitem{shatah_struwe}
J.~Shatah and M.~Struwe.
\newblock Geometric wave equations.
\newblock {\em AMS}, 2000.

\bibitem{shatah_struwe_wm}
J.~Shatah and M.~Struwe.
\newblock The {C}auchy problem for wave maps.
\newblock {\em Int. Math. Res. Not.}, (11):555--571, 2002.

\bibitem{jal_tah1}
J.~Shatah and A.S.~Tahvildar Zadeh.
\newblock Regularity of harmonic maps from the {M}inkowski space into
  rotationally symmetric manifolds.
\newblock {\em Comm. Pure Appl. Math.}, 45:1041--1091, 1992.

\bibitem{jal_tah}
J.~Shatah and A.S.~Tahvildar Zadeh.
\newblock Cauchy problem for equivariant wave maps.
\newblock {\em Comm. Pure Appl. Math.}, 47(5):719--754, 1994.

\bibitem{sterb_tata_long}
J.~Sterbenz and D.~Tataru.
\newblock Energy dispersed large data wave maps in 2+1 dimensions.
\newblock {\em Commun. Math. Phys.}, 298:139--230, 2010.

\bibitem{sterb_tata_main}
J.~Sterbenz and D.~Tataru.
\newblock Regularity of wave maps in dimension 2+1.
\newblock {\em Commun. Math. Phys.}, 298:231--264, 2010.

\bibitem{struwe_equi}
M.~Struwe.
\newblock Equivariant wave maps in two space dimensions.
\newblock {\em Comm. Pure Appl. Math.}, 56(7):815--823, 2003.

\bibitem{struwe_wmsurvey}
M.~Struwe.
\newblock Wave maps with and without symmetries.
\newblock {\em Clay lecture notes}, 2008.

\bibitem{tao_wm1}
T.~Tao.
\newblock Global regularity of wave maps {I}: {S}mall critical {S}obolev norm
  in high dimension.
\newblock {\em {I}{M}{R}{N}}, 7:299--328, 2001.

\bibitem{tao_wm2}
T.~Tao.
\newblock Global regularity of wave maps {I}{I}: {S}mall energy in two
  dimensions.
\newblock {\em Comm. Math. Phys}, 224:443--544, 2001.

\bibitem{tao_book}
T.~Tao.
\newblock Nonlinear dispersive equations.
\newblock {\em CBMS}, 106, 2006.

\bibitem{tao_wm3}
T.~Tao.
\newblock Global regularity of wave maps {I}{I}{I}: {L}arge energy from
  $\mathbb{R}^{1+2}$ to hyperbolic spaces.
\newblock {\em preprint}, 0805.4666, 2008.

\bibitem{tao_wm4}
T.~Tao.
\newblock Global regularity of wave maps {I}{V}: {A}bsence of stationary or
  self-similar solutions in the energy class.
\newblock {\em preprint}, 0806.3592, 2008.

\bibitem{tao_wm5}
T.~Tao.
\newblock Global regularity of wave maps {V}: {L}arge data local well posedness
  in the energy class.
\newblock {\em preprint}, 0808.0368, 2008.

\bibitem{tao_wm6}
T.~Tao.
\newblock Global regularity of wave maps {V}{I}: {A}bstract theory of minimal
  energy blow up solutions.
\newblock {\em preprint}, 0906.2833, 2009.

\bibitem{tao_wm7}
T.~Tao.
\newblock Global regularity of wave maps {V}{I}{I}: {C}ontrol of delocalized or
  dispersed solutions.
\newblock {\em preprint}, 0908.0776, 2009.

\bibitem{tat_besovh}
D.~Tataru.
\newblock Local and global results for wave maps {I}.
\newblock {\em Comm. PDE}, 23(9-10):1781--1793, 1998.

\bibitem{tat_besovl}
D.~Tataru.
\newblock On global existence and scattering for the wave maps equation.
\newblock {\em Amer. J. Math.}, 123(3):385--423, 2001.

\bibitem{tat_wmsurvey}
D.~Tataru.
\newblock The wave maps equation.
\newblock {\em Bull. Amer. Math. Soc.}, 41(2):185--204, 2004.

\bibitem{tat_isom}
D.~Tataru.
\newblock Rough solutions for the wave maps equation.
\newblock {\em Amer. J. Math.}, 127(2):293--377, 2005.

\bibitem{thorne_cenergy}
K.~Thorne.
\newblock Energy of infinitely long, cylindrically symmetric systems in general
  relativity.
\newblock {\em Phys. Rev}, 138(1B):B251--B266, 1965.

\end{thebibliography}
\end{document}